\documentclass[10pt]{article}

\usepackage{amssymb,latexsym,amsmath,enumerate,verbatim,amsfonts,amsthm}
\usepackage{algorithm,algorithmic,cite,multicol,microtype}
\usepackage{multirow,diagbox}
\usepackage[figuresright]{rotating} % Used to arrange tables horizontally
\usepackage{color} % added by TK
\usepackage{graphicx}
\usepackage[active]{srcltx}
%\usepackage[colorlinks,
%            linkcolor=black,
%            anchorcolor=black,
%            citecolor=black]{hyperref}

\allowdisplaybreaks[4]
\numberwithin{equation}{section}

\usepackage[framemethod=tikz]{mdframed}% added by TK. Create nice boxes
\mdfsetup{%
  skipbelow=4pt,
  skipabove=8pt,
  linewidth=1.25pt,
  backgroundcolor=gray!10,
  userdefinedwidth=\textwidth,
  roundcorner=10pt,
}

\textwidth 15.0cm \textheight 22.5cm \oddsidemargin 0.1 cm
\evensidemargin 0.1 cm \topmargin -1 cm

\newtheorem{lemma}{Lemma}[section]
\newtheorem{definition}{Definition}[section]

\newtheorem{theorem}{Theorem}[section]
\newtheorem{corollary}{Corollary}[section]
\newtheorem{proposition}{Proposition}[section]
\newtheorem{remark}{Remark}[section]
\newtheorem{example}{Example}[section]
\newtheorem{assumption}{Assumption}[section]

% def by Ting Kei Pong

\def\R{{\rm I\!R}}

\def\argmin{\mathop{\rm arg\,min}}
\def\Argmin{\mathop{\rm Arg\,min}}

\def\Diag{{\rm Diag}}
\def\tx{{\widetilde x}}

\def\xfeas{x^\odot}
\def\betamin{{\rm \beta_{min}}}
\def\sigmamin{{\rm\tilde{\sigma}_{min}}}

\def\d{{\rm dist}}
\def\dom{{\rm dom}\,}
\def\xorig{{x_{\rm orig}}}
\def\xfeasss{x^\circledcirc}

\title{\sf Retraction-based first-order feasible methods for difference-of-convex programs with smooth inequality and simple geometric constraints}

\author{
Yongle Zhang\thanks{
        Department of Mathematics, Visual Computing and Virtual Reality Key Laboratory of Sichuan Province, Sichuan Normal University, Chengdu, People's Republic of China.
        E-mail: \texttt{yongle-zhang@163.com}}
\and
Guoyin Li\thanks{
        Department of Applied Mathematics, University of New South Wales, Sydney, Australia.
        E-mail: \texttt{g.li@unsw.edu.au}}
\and
Ting Kei Pong\thanks{
		Department of Applied Mathematics, the Hong Kong Polytechnic University, Hong Kong, People's Republic of China.
		E-mail: \texttt{tk.pong@polyu.edu.hk}}
\and
Shiqi Xu\thanks{
        Department of Mathematics, Sichuan Normal University, Chengdu, People's Republic of China.
        E-mail: \texttt{xushiq98@163.com}}
}

\begin{document}

\maketitle

\begin{abstract}
  In this paper, we propose first-order feasible methods for difference-of-convex (DC) programs with smooth inequality and simple geometric constraints. Our strategy for maintaining feasibility of the iterates is based on a ``retraction" idea adapted from the literature of manifold optimization. When the constraints are convex, we establish the global subsequential convergence of the sequence generated by our algorithm under strict feasibility condition, and analyze its convergence rate when the objective is in addition convex according to the Kurdyka-{\L}ojasiewicz (KL) exponent of the extended objective (i.e., sum of the objective and the indicator function of the constraint set). We also show that the extended objective of a large class of Euclidean norm (and more generally, group LASSO penalty) regularized convex optimization problems is a KL function with exponent $\frac12$; consequently, our algorithm is locally linearly convergent when applied to these problems. We then extend our method to solve DC programs with a single specially structured nonconvex constraint. Finally, we discuss how our algorithms can be applied to solve two concrete optimization problems, namely, group-structured compressed sensing problems with Gaussian measurement noise and compressed sensing problems with Cauchy measurement noise, and illustrate the empirical performance of our algorithms.
\end{abstract}

{\small
{\bf Keywords}: First-order feasible methods, retraction, difference-of-convex optimization, Kurdyka-{\L}ojasiewciz exponents
}

\section{Introduction}
We consider the following difference-of-convex (DC) optimization problem with smooth inequality and geometric constraints:
\begin{equation}\label{P0}
  \begin{array}{rl}
\min\limits_{x\in\R^n} & P(x) := P_1(x) - P_2(x)\\
{\rm s.t.} & g_i(x)\leq 0,~~ i = 1,\ldots, m,\\
           & x\in C,
  \end{array}
\end{equation}
where $P_1:\R^n\rightarrow\R$ and $P_2:\R^n\rightarrow\R$ are {\color{black}finite-valued convex} functions, $C\subseteq \R^n$ is a nonempty compact convex set, $g_i:\R^n\rightarrow\R$ is smooth with Lipschitz gradient on $C$ for each $i$, and the feasible set $\mathcal{F} :=\{x\in C:\; g_i(x)\le 0, ~ i = 1,\ldots, m\}$ is nonempty.

The model problem (\ref{P0}) covers many important optimization problems arising from diverse areas. For example, one can model the compressed sensing (CS) problem with Cauchy noise by minimizing the difference of $\ell_1$-norm and $\ell_2$-norm \cite{YiLH15} or its variants subject to an inequality constraint defined by the so-called Lorentzian norm \cite{CaBA10}, and the set $C$ in \eqref{P0} can be used for modeling other prior information such as bounds or nonnegativity; more discussions on this particular application can be found in Section~\ref{sec6.2} below. Problem~\eqref{P0} is typically challenging to solve because it is in general nonconvex and nonsmooth.% optimization problem.

In this paper, we are interested in developing first-order feasible methods for solving problem \eqref{P0} that rely on affine approximations to the smooth inequality constraints in the subproblems. Our methods utilize only first-order information, and the subproblems that arise are convex constrained optimization problems with the constraint sets being the intersection of $C$ and a polyhedron. %\footnote{On passing, we would like to point out that there are also first-order methods that rely on quadratically approximating the constraint functions (see, for example, \cite{Au13,AuSheTeb10,BoPau16,YuLP20}), which lead to more difficult subproblems.}
 Moreover, it also has the desirable feature that  all the iterates generated always belong to the feasible region. In the literature, the most widely used method for solving \eqref{P0} when the objective function $P$ is additionally smooth and also involves affine approximations to the constraint set is the so-called sequential quadratic programming (SQP) method: Indeed, this class of methods typically further requires that the objective function $P$ and all the constraint functions $g_i$'s to be twice continuously differentiable. The underlying strategy of SQP methods is to construct an algorithm which, at each iteration, utilizes a convex quadratic programming problem with affine constraints to approximate the original nonlinear optimization problem. The literature is vast and we refer the readers to \cite{AcDeJa21,Ber95,NoWr06,GiWo12,FlGL02,So09,Au13,BoPau16,WrightTenny04,PaTi93,FlPa08,LawTits01}.
Recent works on SQP-type methods include \cite{Au13,BoPau16}, where the authors proposed and studied the global convergence of some general majorization minimization schemes which cover various SQP-type methods as special cases. However, these SQP-type methods therein are infeasible SQP-type methods, that is, they do not necessarily force all iterates to be feasible. There are also ``feasible SQP methods" that perform arc-search along quadratic curves determined by two directions, namely a descent direction and a feasible direction (see, for example, \cite{PaTi93,FlPa08,LawTits01}). However, these methods make use of the smoothness of the objective and constraint functions to construct the two directions, and it is unclear how their strategies can be adapted to problem~\eqref{P0} that has a possibly nonsmooth objective.

In this paper, we aim at developing feasible first-order methods for the nonconvex nonsmooth problem \eqref{P0} under additional assumptions on $g_i$ by making use of affine approximations for the constraints as in SQP-type methods, and establishing the global convergence of the proposed algorithms. Different from the SQP-type methods covered in \cite{Au13,BoPau16}, our methods maintain feasibility of all iterates for solving \eqref{P0}, under additional assumptions on $g_i$. Our strategy for maintaining feasibility is different from those adopted in the literature of ``feasible SQP methods" that are based on arc-search. Instead, we adapt the retraction strategy used in manifold optimization \cite{ChenMA20,ChenZM19} in our scheme to ensure descent property for the objective function while maintaining the feasibility of the iterates.

{\color{black} For nonconvex optimization problems with difference-of-convex structure, our method belongs to
the so-called successive convex programs strategy which, in each subproblem, a convex approximation to the constraint is formed. In the literature, popular methods of this form include the standard difference-of-convex algorithm (DCA) and its variants (see \cite{DCA18} for a recent survey), and also the moving balls approximation methods \cite{Au13,AuSheTeb10,BoPau16,YuLP20}. For these methods, their subproblems involve quadratic or general convex constraints which are, in general, harder to solve comparing to our proposed method because our method solves a linearly constrained problem in each subproblem. It is also worth mentioning that the technique in using affine approximation for the constraints to construct numerical methods to solve possibly nonconvex problems has been exploited in the literature  (for example, see \cite{Veinott,AckOli16,AcDeJa21}). However, our method differs from these methods in various important aspects. More explicitly, \cite{Veinott,AckOli16} proposed the so-called supporting hyperplane methods which employ a cutting plane strategy. In particular, for the supporting hyperplane methods in \cite{Veinott,AckOli16}, as the algorithm progresses, the number of halfspaces involved in the subproblem {\em grows}, with many of them constructed based on information from the \emph{past iterates}. In contrast, our method only requires a {\it fixed} number of halfspaces in each iteration, and the constraint functions are linearized only at the \emph{current iterate}. {\color{black} Thus, each subproblem in our method bears a similar computational cost. In addition, while the method in \cite{AcDeJa21} can make use of subproblems with a {\em fixed} number of constraints and is applicable to a wider class of problems (e.g., it does not require differentiability of the constraint function), the sequence generated and its cluster points are not guaranteed to be feasible; this is different from our method, which maintains the feasibility of all iterates. Finally,} due to the specific construction of the retraction step, our method ensures the descent property of the objective value: an important aspect that was not exploited in \cite{Veinott,AckOli16,AcDeJa21}.
%These type of methods   %including the recent studies on convergence to a stronger version of the stationarity \cite{PRA17}.
}

More explicitly, the contributions of this paper are summarized as follows:
\begin{itemize}
  \item We propose a retraction based first-order feasible method that utilizes affine approximations for the constraints in each subproblem for solving problem \eqref{P0} in the case where the constraint functions $g_i$'s are convex under strict feasibility condition, and establish its global subsequential convergence. We also analyze the convergence rate in the fully convex setting, i.e., when $P_2$ is also zero, under suitable Kurdyka-{\L}ojasiewicz (KL) assumptions.
  \item By assuming {\color{black} a stronger version of the strict feasibility condition}, we then extend our method to solve \eqref{P0} with one single specially structured nonconvex constraint function and establish the global subsequential convergence of the extended method. Our assumptions on the constraint function are general enough to allow the choice of the composition of the Lorentzian norm function with an affine function, a commonly used loss function for compressed sensing problems with Cauchy measurement noise \cite{CaBA10}.
  \item Finally, we examine the KL exponents of several structured optimization models of the form \eqref{P0}, an important quantity which quantifies the convergence rate of the proposed method in the convex setting. Specifically, we show that the extended objective\footnote{This refers to the sum of the objective and the indicator function of the constraint set.} of a large class of Euclidean norm (and more generally, group LASSO penalty) regularized convex optimization problems is a KL function with exponent $\frac12$. This complements the recent study in this direction \cite{LiPong18,YuLiPo19,YuLP20,ZYuP20}, and, in particular, gives rise to linear convergence of our method in solving these structured optimization models.
\end{itemize}

The rest of the paper is organized as follows. We introduce notation and preliminary materials in Section~\ref{sec2}. Our retraction based first-order feasible method for solving \eqref{P0} when $g_i$'s are convex is presented and analyzed in Section~\ref{sec3}; we also analyze the convergence rate there in the fully convex setting. In Section~\ref{sec4}, we discuss how our method can be extended to solving \eqref{P0} with one single specially structured nonconvex constraint function. Explicit KL exponents of several structured optimization models in the form of \eqref{P0} are derived in Section~\ref{sec5}. Finally, in Section~\ref{sec6}, we perform numerical experiments to compare the performance of our proposed methods with an adaptation of the method in \cite{Au13} on solving two concrete optimization problems: group-structured CS problems with Gaussian measurement noise and CS problems with Cauchy measurement noise. For the readers' convenience, in the Appendix, we also provide explanations on how the subproblems are solved in our numerical experiments.

\section{Notation and preliminaries}\label{sec2}
In this paper, we use $\R^n$ to denote the Euclidean space of dimension $n$ and $\R^n_+$ to denote the nonnegative orthant of $\R^n$. For two vectors $x$ and $y\in\R^n$, we use $\langle x, y\rangle$ to denote the inner product. The Euclidean norm of $x$ is denoted by $\|x\|$ and the $\ell_1$ norm of $x$ is denoted by $\|x\|_1$. For $x\in\R^n$ and $r\ge 0$, we let $B(x,r)$ denote the closed ball centered at $x$ with radius $r$ {\color{black} with respect to the Euclidean norm}. We also use $\mathbb{N}_+$ to denote the set of positive integers.

For an extended-real-valued function $h: \R^n \to [-\infty, \infty]$, we denote its domain by $\dom h = \{x\in\R^n:\; h(x) <\infty\}$. The function $h$ is said to be proper if $h(x)>-\infty$ for all $x \in \R^n$ and $\dom h\not = \emptyset$. Moreover, a proper function is said to be closed if it is lower semicontinuous. Given a proper closed function $h$, the regular subdifferential $\widehat{\partial} h$ and the (limiting) subdifferential $\partial h$ at $x\in\dom h$ are given respectively by
\[
\widehat{\partial} h(x) = \left\{v\in\R^n:\; \liminf_{y\to x, y\not= x}\frac{h(y) - h(x) - \langle v, y - x\rangle}{\|y - x\|}\geq 0 \right\},
\]
\[
\partial h(x) = \left\{v\in\R^n:\; \exists x^k\overset{h}\rightarrow x\text{ and } v^k\in\widehat{\partial} h(x^k) \text{ with } v^k\rightarrow v \right\},
\]
where $x^k\overset{h}\rightarrow x$ means $x^k\rightarrow x$ and $h(x^k)\rightarrow h(x)$. By convention, if $x\not\in\dom h$, we set $\partial h(x) = \emptyset$. We also write $\dom\partial h:= \{x\in\R^n:\; \partial h(x) \not= \emptyset\}$. When $h$ is continuously differentiable at $x$, we have $\partial h(x) = \{\nabla h(x)\}$ according to \cite[Exercise~8.8(b)]{RoWe98}. Moreover, when $h$ is convex, from \cite[Proposition~8.12]{RoWe98}, the above subdifferentials reduce to the classical subdifferential in convex analysis.

For a nonempty set $C\subseteq\R^n$, the support function is defined as $\sigma_C(x) := \sup_{y\in C}\langle x,y\rangle$ and the indicator function $\delta_C$ is defined as
\[
\delta_C(x): =\begin{cases}
  0 & {\rm if}\ x\in C,\\
  \infty & {\rm otherwise}.
\end{cases}
\]
The normal cone of a closed set $C$ at an $x\in C$ is defined by $\mathcal{N}_C(x):= \partial\delta_C(x)$. In addition, for a nonempty closed set $C\subseteq\R^n$, we denote the distance from a point $x\in\R^n$ to $C$ by $\d(x,C):=\inf\limits_{y\in C}\|x - y\|$.

We next recall the Kurdyka-{\L}ojasiewicz property, which is satisfied by many functions such as proper closed semialgebraic functions, and is important for analyzing global convergence and local convergence rate of first-order methods; see, for example, \cite{AtBo09,AtBR10,AtBS13,BoST14,LiPong18}.
\begin{definition}[{{\bf Kurdyka-{\L}ojasiewicz property and exponent}}]
We say that a proper closed function $h:\R^n\to (-\infty, \infty]$ satisfies the Kurdyka-{\L}ojasiewicz (KL) property at an $\hat{x}\in\dom\partial h$ if there are $a\in(0,\infty]$, a neighborhood $V$ of $\hat{x}$ and a continuous concave function {\color {black} $\phi:[0,a)\to [0, \infty)$ } with {\color {black} $\phi(0) = 0$} such that
\begin{enumerate}[{\rm (i)}]
  \item {\color {black} $\phi$} is continuously differentiable on $(0, a)$ with {\color {black} $\phi' >0$} on $(0, a)$;
  \item for any $x\in V$ with $h(\hat{x}) < h(x) < h(\hat{x}) + a$, it holds that
      \begin{equation}\label{KLdefin}
      {\color {black} \phi}'(h(x) - h(\hat{x}))\d(0, \partial h(x)) \geq 1.
      \end{equation}
\end{enumerate}
If $h$ satisfies the KL property at $\hat{x}\in\dom\partial h$ and the {\color {black} $\phi$} in \eqref{KLdefin} can be chosen as ${\color {black} \phi}(\nu) = a_0\nu^{1 - \alpha}$ for some $a_0 > 0$ and $\alpha\in [0, 1)$, then we say that $h$ satisfies the KL property at $\hat{x}$ with exponent $\alpha$.

A proper closed function $h$ satisfying the KL property at every point in $\dom\partial h$ is called a KL function, and a proper closed function $h$ satisfying the KL property with exponent $\alpha\in[0,1)$ at every point in $\dom\partial h$ is called a KL function with exponent $\alpha$.
\end{definition}

Before ending this section, we recall the following standard constraint qualifications on the constraint set of \eqref{P0} and the first-order necessary optimality conditions for \eqref{P0}.
\begin{definition}[{{\bf MFCQ}}]\label{MFCQ1}
We say that the Mangasarian-Fromovitz constraint qualifications for \eqref{P0} holds at an $x\in \mathcal{F}$ if the following implication holds:
\[
\left.\begin{matrix}
-\sum\limits_{i=1}^m\lambda_i \nabla g_i(x)\in \mathcal{N}_C(x)\\
\lambda_ig_i(x) =0, ~ i = 1, \ldots, m\\
\lambda_i\geq 0, ~ i = 1, \ldots, m
\end{matrix}\right\} \Rightarrow\lambda_i =0, ~ i = 1,\ldots, m.
\]
\end{definition}
{\color{black} It is known that if the $g_i$'s in \eqref{P0} are in addition convex and the (generalized) Slater condition holds, i.e., $\{x \in C: g_i(x) < 0, ~ i = 1, \ldots, m\} \neq \emptyset $, then the MFCQ holds at any point in ${\cal F}$; indeed, suppose that $g_i$'s in \eqref{P0} are in addition convex, $x\in {\cal F}$, $-\sum\limits_{i=1}^m\lambda_i \nabla g_i(x)\in \mathcal{N}_C(x)$, $\lambda_ig_i(x) =0$ and
$\lambda_i\geq 0$ for all $i$, and let $\hat x\in C$ satisfy $\max_{1\le i\le m}g_i(\hat x) \le -\delta < 0$ for some $\delta > 0$. Then
\[
0 \le \sum_{i=1}^m\langle \lambda_i \nabla g_i(x), \hat x - x\rangle\le \sum_{i=1}^m\lambda_i(g_i(\hat x) - g_i(x)) = \sum_{i=1}^m\lambda_ig_i(\hat x)\le -\delta\sum_{i=1}^m\lambda_i,
\]
where the first inequality holds thanks to $-\sum\limits_{i=1}^m\lambda_i \nabla g_i(x)\in \mathcal{N}_C(x)$ and $\hat x\in C$, and the second inequality is due to convexity.
The above display together with $\lambda_i \ge 0$ for all $i$ shows that $\lambda_i = 0$ for all $i$.
}
%Next, we recall the definition of stationary point of \eqref{P0} when $g$ is smooth.
\begin{definition}[{{\bf {\color{black} Critical point}}}]\label{Stationary}
We say that $x\in\R^n$ is a {\color{black}critical point} of \eqref{P0} if there exists $\lambda = (\lambda_1, \lambda_2, \ldots, \lambda_m)\in\R_+^m$ such that $(x,\lambda)$ satisfies
 \begin{enumerate}[{\rm (i)}]
   \item $g_i(x) \leq 0, ~ i = 1, \ldots, m$, and $x\in C$;
   \item $\lambda_ig_i(x)=0, ~ i = 1, \ldots, m$;
   \item $0\in\partial P_1(x) - \partial P_2(x) + \sum\limits_{i=1}^m\lambda_i \nabla g_i(x) + \mathcal{N}_C(x)$.
 \end{enumerate}
\end{definition}

If the MFCQ holds at every point in $\mathcal{F}$, by using similar arguments as in \cite[Section~2]{YuLP20}, one can show that any local minimizer of \eqref{P0} is a {\color{black}critical point} of \eqref{P0}. We omit its proof for brevity.

\section{Convex constraints with an explicit strict feasible point}\label{sec3}
In this section, we study \eqref{P0} under the following additional assumption:
\begin{assumption}\label{assumption1}
For each $i = 1, \ldots, m$, the function $g_i$ in \eqref{P0} is convex. Moreover, {\color{black} the Slater condition holds and a Slater point is known, i.e., there exists $\xfeas\in C$ with $g_i(\xfeas) < 0$ for $i = 1, \ldots, m$, and $\xfeas$ is explicitly known. }
\end{assumption}
We now describe our method for solving \eqref{P0} under Assumption~\ref{assumption1} in Algorithm~\ref{SQP_retract} below. In essence, our algorithm is an SQP-type method in the sense that, in each iteration, we replace the smooth functions $g_i$ by their affine minorants in the subproblem (see \eqref{subproblem2}). On the other hand, we force our iterates to be feasible by performing a suitable convex combination with the Slater point $\xfeas$ in Step 2a). A line-search scheme is adapted to guarantee sufficient descent in Step 2b). This strategy for maintaining feasibility is different from those adopted in feasible SQP methods, which perform arc-search along quadratic curves \cite{PaTi93,FlPa08}. Our strategy is more similar in spirit to the retraction strategy in manifold optimization \cite{ChenMA20,ChenZM19}: as we shall see in the convergence analysis, they induce similar descent conditions on the objective function values. Since our method makes use of affine approximations (hence, polyhedral) and leverages the idea of retraction, we shall refer to our algorithm as first-order polyhedral approximation (FPA) method with retraction.
\begin{algorithm}
\caption{FPA method with retraction for \eqref{P0} under Assumption~\ref{assumption1}}\label{SQP_retract}
\begin{algorithmic}
\STATE
\begin{description}
  \item[\bf Step 0.] Choose $x^0\in \mathcal{F}$, $0 < \underline{\beta} < \overline{\beta}$, $c > 0$ and $\eta \in (0, 1)$. Choose $\xfeas$ as in Assumption~\ref{assumption1} and set $k = 0$.
  \item[\bf Step 1.] Pick any $\xi^k\in \partial P_2(x^k)$. Choose $\beta_k^0\in[\underline{\beta}, \overline{\beta}]$ arbitrarily and set $\widetilde{\beta} = \beta_k^0$.
  \item[\bf Step 2.] Compute
  \begin{equation}\label{subproblem2}
  \begin{array}{rl}
  \widetilde{u}= \argmin\limits_{x\in\R^n} & P_1(x) - \langle \xi^k, x - x^k\rangle + (2\widetilde{\beta})^{-1}\|x - x^k\|^2\\ [2pt]
      {\rm s.t.}& g_i(x^k) + \langle \nabla g_i(x^k), x - x^k\rangle\leq 0,\ \ i = 1, \ldots, m,\\ [2pt]
      & x\in C.
  \end{array}
  \end{equation}
  \begin{enumerate}[\bf {Step 2}a)]
    \item If $\max\limits_{1 \le i \le m} \{g_i(\widetilde{u})\}\leq 0$, set $\widetilde{x}=\widetilde{u}$ and $\widetilde\tau = 0$; else, find $\widetilde{\tau}\in(0,1)$ such that $\widetilde{x}:=(1 - \widetilde{\tau})\widetilde{u} + \widetilde{\tau} \xfeas$ satisfies $\max\limits_{1 \le i \le m} \{g_i(\widetilde{x})\} = 0$.
    \item If $P(\widetilde{x}) \leq P(x^k) - \frac{c}{2}\|\widetilde{u} - x^k\|^2$, go to \textbf{Step 3}; else, update $\widetilde{\beta}\leftarrow \eta\widetilde{\beta}$ and go to \textbf{Step 2}.
  \end{enumerate}
  \item[\bf Step 3.] Set $x^{k+1} = \widetilde{x}$, $u^k = \widetilde{u}$, $\beta_k = \widetilde{\beta}$, $\tau_k = \widetilde{\tau}$. Update $k \leftarrow k+1$ and go to \textbf{Step 1}.
\end{description}
\end{algorithmic}
\end{algorithm}

We first show that Algorithm~\ref{SQP_retract} is well defined. To this end, it suffices to show that, if $x^k\in {\cal F}$ for some $k\ge 0$, then the subproblem~\eqref{subproblem2} has a unique solution, $\widetilde\tau$ exists in Step 2a) and the line-search step in Step 2b) terminates in finitely many inner iterations, so that $x^{k+1}$ can be generated. Note that such an $x^{k+1}$ belongs to ${\cal F}$ in view of Step 2a). This together with an induction argument would establish the well-definedness of Algorithm~\ref{SQP_retract}.

\begin{theorem}[{{\bf Well-definedness of Algorithm~\ref{SQP_retract}}}]\label{welldef1}
Consider \eqref{P0} and suppose that Assumption~\ref{assumption1} holds. Suppose that $x^k\in {\cal F}$ is {\color{black} generated at the beginning of the $k$-th iteration} of Algorithm~\ref{SQP_retract} for some $k \ge 0$. Then the following statements hold:
\begin{enumerate}[{\rm (i)}]
   \item The convex subproblem \eqref{subproblem2} has a unique solution. Moreover, $\xfeas$ from Assumption~\ref{assumption1} is a Slater point of the feasible set of \eqref{subproblem2}, that is, $\xfeas\in C$ and $$\max\limits_{1 \le i \le m}\{g_i(x^k) + \langle \nabla g_i(x^k), \xfeas - x^k\rangle\} < 0.$$
   \item If $\max\limits_{1 \le i \le m} \{g_i(\widetilde{u})\} > 0$, then there exists $\widetilde{\tau}\in (0, 1)$ such that $\widetilde{x}:=(1 - \widetilde{\tau})\widetilde{u} + \widetilde{\tau} \xfeas$ satisfies $\max\limits_{1 \le i \le m} \{g_i(\widetilde{x})\} = 0$. Moreover, in this case, for any such $\widetilde\tau$, we have
       \begin{equation}\label{taubound0}
       \widetilde{\tau} \leq \frac{L\|\widetilde{u} - x^k\|^2}{-2\max\limits_{1 \le i \le m}\{g_i(\xfeas)\}}\ \ \ {and}\ \ \
       \|\widetilde{x} - \widetilde{u}\| \leq \frac{ML\|\widetilde{u} - x^k\|^2}{-2\max\limits_{1 \le i \le m}\{g_i(\xfeas)\}},
       \end{equation}
       where $M := \sup\limits_{x\in C}\|x - \xfeas\|$ and $L := \max\limits_{1 \le i \le m} L_{g_i}$, with $L_{g_i}$ being the Lipschitz continuity modulus of $\nabla g_i$ on $C$ for each $i$.
   \item Step 2 of algorithm \ref{SQP_retract} terminates in finitely many inner iterations.
 \end{enumerate}
 Thus, an $x^{k+1}\in {\cal F}$ can be generated at the end of the $(k+1)$-th iteration of Algorithm~\ref{SQP_retract}.
\end{theorem}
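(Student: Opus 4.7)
\medskip

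The plan is to handle the three claims in order, since (ii) relies on the existence of $\widetilde u$ established in (i), and (iii) uses the estimate from (ii).

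For part (i), I would first argue that the feasible set of \eqref{subproblem2} contains $\xfeas$, and then note that the objective is strongly convex (because of the $(2\widetilde\beta)^{-1}\|x-x^k\|^2$ term added to the convex $P_1$ and a linear term), so existence and uniqueness of a minimizer follows from standard convex analysis. The key computation is to verify that $\xfeas$ is strictly feasible for the linearized constraints: by convexity of each $g_i$,
\[
g_i(x^k)+\langle\nabla g_i(x^k),\xfeas-x^k\rangle \;\le\; g_i(\xfeas) \;\le\; \max_{1\le j\le m}g_j(\xfeas) \;<\;0,
\]
where the last inequality is Assumption~\ref{assumption1}. Membership of $\xfeas$ in $C$ is immediate.

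For part (ii), the existence of $\widetilde\tau$ is a direct application of the intermediate value theorem to the continuous function $\tau\mapsto\max_i g_i((1-\tau)\widetilde u+\tau\xfeas)$, which is strictly positive at $\tau=0$ (by hypothesis) and strictly negative at $\tau=1$ (by the Slater condition). For the quantitative bounds, pick any index $i^*$ attaining the max at $\widetilde x$, so $g_{i^*}(\widetilde x)=0$. Using convexity of $g_{i^*}$ along the segment and then the fact that $g_{i^*}(\xfeas)\le\max_jg_j(\xfeas)<0$, one obtains
\[
0=g_{i^*}(\widetilde x)\le(1-\widetilde\tau)g_{i^*}(\widetilde u)+\widetilde\tau g_{i^*}(\xfeas)
\quad\Longrightarrow\quad
\widetilde\tau\le\frac{g_{i^*}(\widetilde u)}{-g_{i^*}(\xfeas)}\le\frac{g_{i^*}(\widetilde u)}{-\max_j g_j(\xfeas)}.
\]
Next, since $\widetilde u$ is feasible for \eqref{subproblem2}, the linearized constraint for $i^*$ plus the descent lemma applied to $g_{i^*}$ (whose gradient is $L_{g_{i^*}}$-Lipschitz on $C$) yields $g_{i^*}(\widetilde u)\le\tfrac{L}{2}\|\widetilde u-x^k\|^2$, giving the first estimate in \eqref{taubound0}. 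The norm bound then follows from $\widetilde x-\widetilde u=\widetilde\tau(\xfeas-\widetilde u)$ together with $\widetilde u\in C$ and the definition of $M$.

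Part (iii) is the main work. The natural strategy is to produce an explicit sufficient decrease inequality valid for all $\widetilde\beta$ small enough, from which finite termination is immediate since $\widetilde\beta$ is driven to zero geometrically by the factor $\eta\in(0,1)$. First, $x^k$ is feasible for \eqref{subproblem2}, so by optimality of $\widetilde u$ together with $\xi^k\in\partial P_2(x^k)$ and convexity of $P_2$, I would derive the three-point inequality
\[
P(\widetilde u)\;\le\;P(x^k)-\frac{1}{2\widetilde\beta}\|\widetilde u-x^k\|^2.
\]
Next, since $P_1,P_2$ are finite-valued convex on $\R^n$, they are Lipschitz on the compact set $C$; combining this with the estimate $\|\widetilde x-\widetilde u\|\le K\|\widetilde u-x^k\|^2$ from part (ii) (with $K:=ML/(-2\max_jg_j(\xfeas))$), I get $|P(\widetilde x)-P(\widetilde u)|\le L_PK\|\widetilde u-x^k\|^2$ for some $L_P\ge 0$. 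Adding these gives
\[
P(\widetilde x)\;\le\;P(x^k)-\Bigl(\frac{1}{2\widetilde\beta}-L_PK\Bigr)\|\widetilde u-x^k\|^2,
\]
so the acceptance criterion of Step~2b) is met whenever $\widetilde\beta\le(2L_PK+c)^{-1}$, which occurs after finitely many shrinkings of $\widetilde\beta$. The main obstacle I anticipate is making sure that the case $\widetilde u\in{\cal F}$ (where $\widetilde x=\widetilde u$ and the second piece of the argument is vacuous) is handled uniformly, but this follows from the same inequality since the $L_PK$ term can simply be dropped when $\widetilde\tau=0$. Combining (i)--(iii) with $\widetilde x\in{\cal F}$ by construction of Step~2a) yields $x^{k+1}\in{\cal F}$.
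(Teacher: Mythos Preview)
Your argument is correct and closely parallels the paper's proof. Parts (i) and (ii) are essentially identical to what the paper does (the paper works directly with the convex function $g(x)=\max_i g_i(x)$ rather than selecting an index $i^*$, but this is cosmetic). In part (iii) there is a minor but genuine difference: the paper exploits the convex-combination structure $\widetilde x=(1-\widetilde\tau)\widetilde u+\widetilde\tau\xfeas$ together with convexity of $P_1$ to obtain
\[
P(\widetilde x)\le P(x^k)-\Bigl(\tfrac{1}{2\widetilde\beta}-\tfrac{M_1L}{-2\max_i g_i(\xfeas)}\Bigr)\|\widetilde u-x^k\|^2,
\]
where $M_1=\sup_{y,z\in C}\sup_{\xi\in\partial P_2(z)}\{P_1(\xfeas)-P_1(y)-\langle\xi,\xfeas-y\rangle\}$, and then applies the bound on $\widetilde\tau$ from (ii). You instead pass through $P(\widetilde u)$ and use that $P$ is Lipschitz on the compact set $C$ together with the bound on $\|\widetilde x-\widetilde u\|$ from (ii). Both routes produce an inequality of the same form with a $k$-independent threshold on $\widetilde\beta$; your version is arguably more direct, while the paper's version yields the explicit constant $M_1$ that it reuses later in Remark~\ref{gamma} for the uniform lower bound on $\beta_k$.
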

\begin{proof}
(i): First, we note that the feasible region of \eqref{subproblem2} is closed and convex. Next, we show that this set is nonempty by showing that it contains the point $\xfeas\in C$. Indeed, since each $g_i$ is convex, we have that for each $i$, $g_i(y) + \langle \nabla g_i(y),\xfeas - y\rangle\leq g_i(\xfeas) < 0$, for all $y\in \R^n$. This also shows that $\xfeas$ is a Slater point of the feasible set of \eqref{subproblem2}. Now, since $P_1$ is convex and continuous, we see that $x\mapsto P_1(x) - \langle \xi^k, x - x^k\rangle + \frac{1}{2\widetilde{\beta}}\|x - x^k\|^2$ is strongly convex and continuous for any $\widetilde{\beta} > 0$. Therefore, \eqref{subproblem2} is a convex optimization problem with a strongly convex objective function over a nonempty closed convex set, and hence, has a unique solution.

(ii): Suppose that $\max\limits_{1 \le i \le m} \{g_i(\widetilde{u})\} > 0$. By Assumption \ref{assumption1}, we have $\max\limits_{1 \le i \le m} \{g_i(\xfeas)\} < 0$. Let $g(x)= \max\limits_{1 \le i \le m} g_i(x)$. Then, by the continuity of $g$, there exists $\widetilde{\tau}\in (0, 1)$ such that $\widetilde{x} := (1 - \widetilde{\tau})\widetilde{u} + \widetilde{\tau} \xfeas$ satisfies $g( \widetilde{x})=\max\limits_{1 \le i \le m} \{g_i(\widetilde{x})\} = 0$.

Since $g$ is convex, we have
\[
0 = g(\widetilde{x})=\max\limits_{1 \le i \le m} \{g_i(\widetilde{x})\} \leq (1 - \widetilde{\tau})\max\limits_{1 \le i \le m} \{g_i(\widetilde{u})\} + \widetilde{\tau} \max\limits_{1 \le i \le m} \{g_i(\xfeas)\}.
\]
Upon rearranging terms in the above inequality, we see that
\begin{equation}\label{taubound_1}
\begin{aligned}
\widetilde{\tau}&\overset{\rm (a)}\leq\frac{\max\limits_{1 \le i \le m} \{g_i(\widetilde{u})\}}{\max\limits_{1 \le i \le m} \{g_i(\widetilde{u})\} - \max\limits_{1 \le i \le m}\{g_i(\xfeas)\}} \overset{\rm (b)}\leq \frac{\max\limits_{1 \le i \le m}\{g_i(\widetilde{u})\}}{-\max\limits_{1 \le i \le m}\{g_i(\xfeas)\}} \\
& \overset{\rm (c)}\le \frac{\max\limits_{1 \le i \le m}\left\{g_i(x^k) + \langle\nabla g_i(x^k),\widetilde{u}-x^k\rangle + (L_{g_i}/2)\|\widetilde{u} - x^k\|^2\right\}}{-\max\limits_{1 \le i \le m}\{g_i(\xfeas)\}} \overset{\rm (d)}\leq \frac{L\|\widetilde{u} - x^k\|^2}{-2\max\limits_{1 \le i \le m}\{g_i(\xfeas)\}},
\end{aligned}
\end{equation}
where (a) and (b) hold because $\max\limits_{1 \le i \le m}\{g_i(\widetilde{u})\} > 0$ and $\max\limits_{1 \le i \le m}\{g_i(\xfeas)\} < 0$, (c) is true because $\nabla g_i$ is Lipschitz with modulus $L_{g_i}$ on the convex set $C$ that contains both $\widetilde u$ and $x^k$, and (d) made use of $L := \max\limits_{1 \le i \le m} L_{g_i}$ and the fact that $\widetilde u$ is feasible for \eqref{subproblem2}. Hence,
\[
\begin{aligned}
\|\widetilde{x} - \widetilde{u}\|& = \widetilde{\tau}\|\widetilde{u} - \xfeas\|\overset{\rm (a)}\leq \widetilde{\tau}M \overset{\rm (b)}\leq \frac{ML}{-2\max\limits_{1 \le i \le m}\{g_i(\xfeas)\}}\|\widetilde{u} - x^k\|^2,
\end{aligned}
\]
where $\rm (a)$ holds because $\widetilde{u}\in C$ and $M = \sup\limits_{x\in C}\|x - \xfeas\|$, and $\rm(b)$ follows from \eqref{taubound_1}.

(iii): Note that we have $\widetilde{x} = (1 - \widetilde{\tau})\widetilde{u} + \widetilde{\tau} \xfeas$ for some $\widetilde{\tau}\in [0,1]$, and hence
\[
\begin{aligned}
P(\widetilde{x}) &= P_1(\widetilde{x}) - P_2(\widetilde{x}) \overset{\rm (a)}\leq P_1(\widetilde{x}) - P_2(x^k) - \langle \xi^k, \widetilde{x} - x^k\rangle \\
&\overset{\rm (b)}\leq (1 - \widetilde{\tau})P_1(\widetilde{u}) + \widetilde{\tau}P_1(\xfeas) - P_2(x^k) - (1 - \widetilde{\tau})\langle \xi^k, \widetilde{u} - x^k\rangle - \widetilde{\tau}\langle \xi^k, \xfeas - x^k\rangle\\
&= P_1(\widetilde{u}) - \langle \xi^k, \widetilde{u} - x^k\rangle + \widetilde{\tau}\big(P_1(\xfeas) - P_1(\widetilde{u}) - \langle \xi^k, \xfeas - \widetilde{u}\rangle \big) - P_2(x^k)\\
&\overset{\rm (c)}\leq P_1(x^k) - \frac{1}{2\widetilde{\beta}}\|\widetilde{u} - x^k\|^2  - P_2(x^k) + \widetilde{\tau}\big(P_1(\xfeas) - P_1(\widetilde{u}) - \langle \xi^k, \xfeas - \widetilde{u}\rangle \big)\\
&\overset{\rm (d)}\leq P(x^k) - \frac{1}{2\widetilde{\beta}}\|\widetilde{u} - x^k\|^2 + \widetilde{\tau}M_1 \overset{\rm (e)}\leq P(x^k) - \left(\frac{1}{2\widetilde{\beta}} - \frac{M_1L}{-2\max\limits_{1\le i\le m}\{g_i(\xfeas)\}}\right)\|\widetilde{u} - x^k\|^2,
\end{aligned}
\]
where (a) follows from the convexity of $P_2$, (b) holds because $P_1$ is convex, (c) follows from the facts that $\widetilde{u}$ is the minimizer of \eqref{subproblem2} and $x^k$ is feasible for \eqref{subproblem2} (since $x^k \in {\cal F}$), (d) holds with $M_1:= \sup\limits_{y,z\in C}\sup\limits_{\xi\in\partial P_2(z)} \{P_1(\xfeas) - P_1(y) - \langle\xi, \xfeas - y\rangle\}\in [0,\infty)$ (note that $M_1 < \infty$ thanks to \cite[Theorem~2.6]{Tu98} and the compactness of $C$, and $M_1 \ge 0$ as $\xfeas\in C$), and (e) follows from \eqref{taubound0}.

Therefore, as long as $\widetilde{\beta} < \frac{1}{2}\left(\frac{c}2 + \frac{M_1L}{-2\max\limits_{1 \le i \le m}\{g_i(\xfeas)\}}\right)^{-1}$ (independent of $k$), we have $P(\widetilde{x}) \leq P(x^k) - \frac{c}{2}\|\widetilde{u} - x^k\|^2$. In view of {Step 2b)} of Algorithm \ref{SQP_retract}, we conclude that the inner loop terminates finitely. This completes the proof.
\end{proof}

\begin{remark}[{{\bf Uniform lower bound on $\beta_k$}}]\label{gamma}
From the proof of Theorem~\ref{welldef1}(iii), we see that for the $\beta_k$ in Algorithm~\ref{SQP_retract}, it holds that
\[\inf_k \beta_k \ge \betamin:= \textstyle\min\left\{\frac{\eta}{2}\left(\frac{c}2 + \frac{M_1L}{-2\max\limits_{1 \le i \le m}\{g_i(\xfeas)\}}\right)^{-1},\underline{\beta}\right\},\]
where $\eta$, $c$ and $\underline{\beta}$ are given in Step 0 of Algorithm~\ref{SQP_retract}, $L$ is given in Theorem~\ref{welldef1}(ii) and $M_1= \sup\limits_{y,z\in C}\sup\limits_{\xi\in\partial P_2(z)} \{P_1(\xfeas) - P_1(y) - \langle\xi, \xfeas - y\rangle\}$.
\end{remark}

\begin{remark}[{{\bf Finding $\widetilde\tau$}}]
  In Step 2a) of Algorithm~\ref{SQP_retract}, when $\max\limits_{1 \le i \le m} \{g_i(\widetilde u)\} > 0$, we need to find $\widetilde\tau\in(0,1)$ so that $\max\limits_{1 \le i \le m} \{g_i((1-\widetilde\tau)\widetilde u + \widetilde\tau \xfeas)\} = 0$; such a $\widetilde\tau$ exists thanks to Theorem~\ref{welldef1}(ii). In fact, we only need to find $\widetilde\tau_i\in(0,1)$ so that $g_i((1-\widetilde\tau_i)\widetilde u + \widetilde\tau_i \xfeas) = 0$ for each $i\in I:= \{i: g_i(\widetilde{u}) > 0\}$. Then taking $\widetilde{\tau} = \max\limits_{i\in I}\{\widetilde{\tau}_i\}$ and letting $\widetilde{x} := (1 - \widetilde{\tau})\widetilde{u} + \widetilde{\tau} \xfeas$, it can be verified that $\max\limits_{1 \le i \le m} \{g_i(\widetilde{x})\} = 0$. Since $g_i$ is convex and smooth,  $\widetilde\tau_i$ can be found efficiently by solving the equation $g_i((1-t)\widetilde u + t \xfeas) = 0$ via the Newton's method, starting with $t = 0$. Moreover, in the case where $g_i$ is a quadratic function (e.g., when $g_i$ is the least squares loss function), $\widetilde\tau_i$ admits a closed form solution.
\end{remark}

\begin{remark}\label{KKT}
Since the Slater condition holds for the convex subproblem \eqref{subproblem2} according to Theorem~\ref{welldef1}(i), from \cite[Corollary~28.2.1,
Theorem~28.3]{Ro70}, we have that for each $k$, $\widetilde{u}$ is a minimizer of \eqref{subproblem2} if and only if
there exists $\widetilde\lambda = (\widetilde\lambda_1,\ldots,\widetilde\lambda_m)\in \R_+^m$ (which is called a Lagrange multiplier) such that the following conditions hold:
\begin{enumerate}[{\rm (i)}]
  \item  $g_i(x^k) + \langle \nabla g_i(x^k), \widetilde{u} - x^k\rangle \leq 0$, $i = 1, \ldots m$, and $\widetilde{u}\in C$;
  \item  $\widetilde\lambda_i(g_i(x^k) + \langle \nabla g_i(x^k), \widetilde{u} - x^k\rangle)=0$, $i = 1, \ldots m$;
  \item  $0\in\partial P_1(\widetilde{u}) - \xi^k + \widetilde\beta^{-1}(\widetilde{u} - x^k) + \sum\limits_{i=1}^m\widetilde\lambda_i\nabla g_i(x^k) + \mathcal{N}_C(\widetilde{u})$.
\end{enumerate}
\end{remark}

We next show that any cluster point of the sequence $\{x^k\}$ generated by Algorithm~\ref{SQP_retract} is a {\color {black} critical point} of \eqref{P0} in the sense of Definition~\ref{Stationary}.
\begin{theorem}[{{\bf Subsequential convergence of Algorithm~\ref{SQP_retract}}}]\label{convergence2}
Consider \eqref{P0} and suppose that Assumption~\ref{assumption1} holds. Let {\color{black}$\{(x^k, u^k)\}$} be the sequence generated by Algorithm \ref{SQP_retract} and $\lambda^k$ be a Lagrange multiplier of \eqref{subproblem2} with $\widetilde{\beta} = \beta_k$. Then the following statements hold:
\begin{enumerate}[{\rm(i)}]
  \item It holds that $\lim\limits_{k\rightarrow\infty}\|u^k - x^k\| = 0$.
  \item The sequences $\{x^k\}$ and $\{\lambda^k\}$ are bounded.
  \item Any accumulation point of $\{x^k\}$ is a {\color{black}critical point} of \eqref{P0}.
\end{enumerate}
\end{theorem}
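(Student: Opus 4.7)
The plan is to establish the three conclusions in order, since each relies on the previous: (i) rests on the sufficient descent built into Step 2b), (ii) combines (i) with the Slater condition to tame the multipliers, and (iii) passes to the limit in the KKT system from Remark~\ref{KKT}.

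For (i), I would start from the acceptance criterion $P(x^{k+1}) \le P(x^k) - \tfrac{c}{2}\|u^k - x^k\|^2$ that holds at every outer iteration. Since $\{x^k\}\subseteq {\cal F}\subseteq C$ and $C$ is compact, and since $P = P_1 - P_2$ is finite and continuous, the sequence $\{P(x^k)\}$ is nonincreasing and bounded below, hence convergent. Telescoping then yields $\sum_{k}\|u^k - x^k\|^2 <\infty$, from which $\|u^k - x^k\|\to 0$ is immediate. The boundedness of $\{x^k\}$ in (ii) is also immediate from $\{x^k\}\subseteq C$, and then $\{u^k\}\subseteq C$ is bounded as well.

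The main work is the boundedness of $\{\lambda^k\}$ in (ii); this is the step I expect to be the hardest because the constraint gradients $\nabla g_i(x^k)$ need not be linearly independent and so one has to exploit Slater quantitatively. Using Remark~\ref{KKT} I would write the subdifferential inclusion as
\[
v^k - \xi^k + \beta_k^{-1}(u^k - x^k) + \sum_{i=1}^m \lambda^k_i\nabla g_i(x^k) + w^k = 0
\]
for some $v^k\in\partial P_1(u^k)$ and $w^k\in{\cal N}_C(u^k)$, and then take inner product with $\xfeas - u^k$. The normal-cone term contributes nonpositively, while the multiplier term can be rewritten via complementary slackness $\lambda^k_i\langle\nabla g_i(x^k), u^k - x^k\rangle = -\lambda^k_i g_i(x^k)$ and bounded above using convexity of each $g_i$:
\[
\sum_{i=1}^m \lambda^k_i\langle\nabla g_i(x^k), \xfeas - u^k\rangle \le \sum_{i=1}^m \lambda^k_i\big(g_i(\xfeas) - g_i(x^k)\big) + \sum_{i=1}^m\lambda^k_i g_i(x^k) \le -\delta\sum_{i=1}^m \lambda^k_i,
\]
where $\delta := -\max_i g_i(\xfeas) > 0$ by Assumption~\ref{assumption1}. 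Combining the two inequalities gives
\[
\delta\sum_{i=1}^m \lambda^k_i \le \langle v^k,\xfeas - u^k\rangle - \langle\xi^k,\xfeas - u^k\rangle + \beta_k^{-1}\langle u^k - x^k,\xfeas - u^k\rangle.
\]
Each term on the right-hand side is bounded: $\{v^k\}$ and $\{\xi^k\}$ since the subdifferentials of finite convex functions are locally bounded and $\{u^k\},\{x^k\}\subseteq C$ is compact; $\{\xfeas - u^k\}$ by compactness of $C$; $\beta_k^{-1}$ by Remark~\ref{gamma}; and $\|u^k - x^k\|$ by part (i). This yields the desired uniform bound on $\lambda^k$.

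For (iii), let $x^*$ be an accumulation point of $\{x^k\}$ along a subsequence; by (i), $\{u^k\}$ converges along the same subsequence to $x^*$, and by (ii), after passing to a further subsequence, $\lambda^k\to\lambda^*\in\R^m_+$, $\xi^k\to\xi^*$, and $v^k\to v^*$. Closedness of ${\cal F}$ gives $x^*\in{\cal F}$, which covers condition (i) of Definition~\ref{Stationary}. Continuity of $g_i$ and $\nabla g_i$ and the fact $\|u^k - x^k\|\to 0$ allow passage to the limit in the complementary slackness relation $\lambda^k_i\big(g_i(x^k) + \langle\nabla g_i(x^k), u^k - x^k\rangle\big) = 0$, yielding $\lambda^*_i g_i(x^*) = 0$. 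Finally, outer semicontinuity of $\partial P_1$, $\partial P_2$, and ${\cal N}_C$, together with $\beta_k^{-1}(u^k - x^k)\to 0$ (thanks again to Remark~\ref{gamma} and part (i)), transfers the subdifferential inclusion in Remark~\ref{KKT} to $0\in\partial P_1(x^*) - \partial P_2(x^*) + \sum_i \lambda^*_i\nabla g_i(x^*) + {\cal N}_C(x^*)$, which verifies condition (iii) of Definition~\ref{Stationary} and completes the argument.
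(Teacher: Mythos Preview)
Your proof is correct, and parts (i) and (iii) essentially coincide with the paper's. The genuine difference is in (ii), the boundedness of $\{\lambda^k\}$. The paper argues by contradiction: it assumes $\|\lambda^{k_j}\|\to\infty$, normalizes $\lambda^{k_j}/\|\lambda^{k_j}\|\to\lambda^*$ with $\|\lambda^*\|=1$, divides the KKT inclusion by $\|\lambda^{k_j}\|$, and passes to the limit to obtain $-\sum_i\lambda^*_i\nabla g_i(x^*)\in\mathcal{N}_C(x^*)$ together with $\lambda^*_ig_i(x^*)=0$, contradicting the MFCQ (which is implied by Slater). Your argument is instead direct and quantitative: you test the KKT equation against $\xfeas-u^k$, use $\langle w^k,\xfeas-u^k\rangle\le0$ for the normal-cone term, and exploit convexity plus complementary slackness to produce the uniform lower bound $-\delta\sum_i\lambda^k_i$ on the multiplier term, yielding an explicit bound $\delta\sum_i\lambda^k_i\le A_k$ with $A_k$ uniformly bounded. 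Your route is more elementary and constructive (it actually delivers an explicit multiplier bound in terms of the Slater margin $\delta$, the diameter of $C$, and $\betamin$), whereas the paper's normalization argument is the standard template that generalizes cleanly to settings where only MFCQ, rather than Slater, is assumed.
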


\begin{proof}
Using Step 2b), we obtain $P(x^{k+1}) \leq P(x^k) - \frac{c}{2}\|u^k - x^k\|^2$. Rearranging terms and summing both sides of the inequality from zero to infinity, we have
\[
\frac{c}{2}\sum_{k=0}^\infty\|u^k - x^k\|^2\leq P(x_0) - \lim\limits_{k\rightarrow\infty} P(x^k)\leq P(x_0) - \inf\limits_{x\in C} P(x) < \infty,
\]
where the second inequality holds because $\{x^k\}\subseteq C$ by construction, and the finiteness follows from the compactness of $C$ and the continuity of $P$. Therefore, $\|u^k - x^k\|\rightarrow 0$. This proves (i).

We now prove (ii). Since $\{x^k\}\subseteq C$ and $C$ is compact, $\{x^k\}$ is bounded. We next show that $\{\lambda^k\}$ is bounded. Suppose to the contrary that $\{\lambda^k\}$ is unbounded and let $\{\lambda^{k_j}\}$ be a subsequence of $\{\lambda^k\}$ such that $\|\lambda^{k_j}\|\rightarrow \infty$. In view of the compactness of $C$, by passing to a further subsequence if necessary, we may also assume that $\lim\limits_{j\rightarrow\infty}x^{k_j} = x^*$ for some $x^*\in C$, and $\lim\limits_{j\rightarrow\infty}\frac{\lambda^{k_j}}{\|\lambda^{k_j}\|} = \lambda^*$ for some $\lambda^*\in\R^m_+$ with $\|\lambda^*\| = 1$.

From \eqref{subproblem2} and the definition of Lagrange multipliers (see Remark~\ref{KKT}), we have
\begin{equation}\label{KKT1}
0\in \partial P_1(u^{k_j}) -\xi^{k_j} + \frac{1}{\beta_{k_j}}(u^{k_j} - x^{k_j}) + \sum\limits_{i=1}^m\lambda^{k_j}_i\nabla g_i(x^{k_j}) +\mathcal{N}_C(u^{k_j}),
\end{equation}
and
\begin{equation}\label{KKT2}
\lambda^{k_j}_i (g_i(x^{k_j}) + \langle \nabla g_i(x^{k_j}), u^{k_j} - x^{k_j}\rangle) = 0,\ \ \  i = 1, \ldots, m.
\end{equation}
Also, we note from Remark~\ref{gamma}, the compactness of $C$ and the continuity of $P_2$ that the sequences $\{x^k\}$, $\{u^k\}$, $\{1/\beta_{k}\}$, $\{\xi^{k}\}$ are bounded. Furthermore, by \cite[Theorem~24.7]{Ro70}, the sets $\{\partial P_1(u^{k_j})\}$ are uniformly bounded thanks to the continuity of $P_1$. Now, dividing $\|\lambda^{k_j}\|$ on both sides of \eqref{KKT1} and \eqref{KKT2} and passing to the limit as $j$ goes to infinity, we have upon recalling $\lim\limits_{j\rightarrow\infty}x^{k_j} = x^*$ and item (i), and invoking the closedness of ${\cal N}_C$ that
\begin{equation*}
0\in \sum\limits_{i=1}^m \lambda^*_i\nabla g_i(x^*) + \mathcal{N}_C(x^*)\ \ {\rm and}\ \ \lambda^*_ig_i(x^*) = 0,\ \ \ i = 1, \ldots, m.
\end{equation*}
Since $\lambda^*\in\R^m_+$, the above relation together with Assumption~\ref{assumption1} (which implies the MFCQ in Definition~\ref{MFCQ1}) and the fact that $\|\lambda^*\|=1$ leads to a contradiction. Thus, $\{\lambda^k\}$ is bounded.

Finally, we prove (iii). Suppose $\bar{x}$ is an accumulation point of $\{x^k\}$ with $\lim\limits_{j\rightarrow\infty}x^{k_j} = \bar{x}$ for some convergent subsequence $\{x^{k_j}\}$. Since $\{\lambda^k\}$ is bounded thanks to item (ii), passing to a further subsequence if necessary, we may assume without loss of generality that $\lim\limits_{j\rightarrow\infty}\lambda^{k_j} = \bar{\lambda}$ for some $\bar\lambda\in \R^m_+$. Since $\|u^k - x^k\|\rightarrow 0$ according to item (i), we also have $\lim\limits_{j\rightarrow\infty}u^{k_j} = \bar{x}$.
Using these together with the closedness of $\partial P_1$, $\partial P_2$ and ${\cal N}_C$, the uniform boundedness of the sequences of sets $\{\partial P_1(u^{k_j})\}$ and $\{\partial P_2(x^{k_j})\}$, the Lipschitz continuity of $\nabla g$ and the boundedness of $\{1/\beta_k\}$ (see Remark~\ref{gamma}), we have upon passing to the limit as $j$ goes to infinity in item (iii) of Remark~\ref{KKT} with $(u^{k_j},x^{k_j},\xi^{k_j},\lambda^{k_j})$ in place of $(\widetilde u,x^k,\xi^k,\widetilde\lambda)$ that
\[
0\in \partial P_1(\bar{x}) - \partial P_2(\bar{x}) + \sum\limits_{i=1}^m\bar{\lambda}_i\nabla g_i(\bar{x}) + \mathcal{N}_C(\bar{x}).
\]
Similarly, using item (ii) of Remark~\ref{KKT} with $(u^{k_j},x^{k_j},\lambda^{k_j})$ in place of $(\widetilde u,x^k,\widetilde\lambda)$ and the continuity of $\nabla g$, we have upon passing to the limit that
\[
\bar{\lambda}_i g_i(\bar{x}) = 0,\ \ i = 1, \ldots, m.
\]
Finally, since for each $i$, $g_i$ is continuous and $g_i(x^k)\leq0$ thanks to Step 2a), we have $g_i(\bar{x})\leq0$. In addition, since $\{x^k\}\subseteq C$ and $C$ is closed, we have $\bar{x}\in C$. Hence $\bar{x}$ is a {\color {black} critical point} of \eqref{P0}. This completes the proof.
\end{proof}

\subsection{Convergence rate analysis in convex setting}
In this subsection, we study the global convergence of the sequence $\{x_k\}$ generated by Algorithm \ref{SQP_retract} and its convergence rate, under the following fully convex setting:
\begin{assumption}\label{assumption3}
  Suppose that in \eqref{P0}, $P_2 = 0$.
\end{assumption}

\begin{theorem}[{{\bf Convergence rate of Algorithm \ref{SQP_retract} in convex setting}}]\label{localin}
Consider \eqref{P0} and suppose that Assumptions \ref{assumption1} and \ref{assumption3} hold. Let $\{x^k\}$ be the sequence generated by Algorithm \ref{SQP_retract}. Then $\{x^k\}$ converges to a minimizer $x^*$ of \eqref{P0}. If in addition $x\mapsto P_1(x) + \sum\limits_{i=1}^m\delta_{g_i(\cdot)\leq 0}(x) + \delta_C(x)$ is a KL function with exponent $\alpha\in [0, 1)$, then the following statements hold:
\begin{enumerate}[{\rm (i)}]
   \item If $\alpha\in[0, \frac{1}{2}]$, then there exist $c_0 > 0$, $s\in (0,1)$ and $k_0\in \mathbb{N_+}$, such that
       \[
       \|x^k - x^*\|\leq c_0 s^k ~~\text{  for  } k > k_0.
       \]
   \item If $\alpha\in(\frac{1}{2}, 1)$, then there exist $c_0 > 0$, and $k_0\in \mathbb{N_+}$, such that
       \[
       \|x^k - x^*\|\leq c_0k^{-\frac{1-\alpha}{2\alpha-1}} ~~\text{ for } k > k_0.
       \]
 \end{enumerate}
\end{theorem}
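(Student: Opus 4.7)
My plan is to adapt the classical Kurdyka--{\L}ojasiewicz (KL) framework of \cite{AtBS13,BoST14} to the retraction scheme. Set $F(x):=P_1(x)+\sum_{i=1}^m\delta_{\{g_i(\cdot)\le 0\}}(x)+\delta_C(x)$. Since $P_2=0$, Theorem~\ref{convergence2}(iii) identifies every cluster point of $\{x^k\}$ as a minimizer of $F$. What I need in order to apply the KL machinery is (a)~a sufficient-descent inequality $F(x^k)-F(x^{k+1})\ge a\|x^{k+1}-x^k\|^2$, and (b)~a relative-error subgradient bound $\d(0,\partial F(x^{k+1}))\le b\|x^{k+1}-x^k\|$. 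Ingredient (a) is essentially immediate: Step~2b) gives $F(x^k)-F(x^{k+1})\ge (c/2)\|u^k-x^k\|^2$, while the second-order retraction estimate $\|x^{k+1}-u^k\|\le C_0\|u^k-x^k\|^2$ from Theorem~\ref{welldef1}(ii) together with $\|u^k-x^k\|\to 0$ (Theorem~\ref{convergence2}(i)) yields $\|x^{k+1}-x^k\|\le 2\|u^k-x^k\|$ for large $k$, so (a) holds with $a=c/8$.

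\textbf{Subgradient bound.} The real work is in (b). With $P_2=0$, Remark~\ref{KKT} gives $\xi_1^k+\beta_k^{-1}(u^k-x^k)+\sum_i\lambda_i^k\nabla g_i(x^k)+\eta^k=0$ for some $\xi_1^k\in\partial P_1(u^k)$, $\eta^k\in\mathcal{N}_C(u^k)$ and $\lambda^k\in\R_+^m$. When $\tau_k=0$ (so $x^{k+1}=u^k$), the complementarity $\lambda_i^k[g_i(x^k)+\langle\nabla g_i(x^k),u^k-x^k\rangle]=0$, the convexity bound $g_i(x^{k+1})\ge g_i(x^k)+\langle\nabla g_i(x^k),x^{k+1}-x^k\rangle$, and the feasibility $g_i(x^{k+1})\le 0$ together force $\lambda_i^k g_i(x^{k+1})=0$. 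Substituting $\nabla g_i(x^k)=\nabla g_i(x^{k+1})+[\nabla g_i(x^k)-\nabla g_i(x^{k+1})]$ into the KKT identity then shows that
\[
v^{k+1}:=-\beta_k^{-1}(u^k-x^k)+\sum_i\lambda_i^k\bigl(\nabla g_i(x^{k+1})-\nabla g_i(x^k)\bigr)
\]
lies in $\partial P_1(x^{k+1})+\sum_i\lambda_i^k\nabla g_i(x^{k+1})+\mathcal{N}_C(x^{k+1})\subseteq\partial F(x^{k+1})$, with $\|v^{k+1}\|\le(\beta_k^{-1}+L\|\lambda^k\|_1)\|u^k-x^k\|$; the bracketed factor is uniform in $k$ by Remark~\ref{gamma} and Theorem~\ref{convergence2}(ii). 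When $\tau_k>0$, the retraction displacement is second order, $\|x^{k+1}-u^k\|=O(\|u^k-x^k\|^2)$, and I would carry the same construction across by exploiting that this is of lower order than the target $O(\|u^k-x^k\|)$.

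\textbf{From (a) and (b) to convergence and rate.} With both ingredients in hand, concavity of the KL desingularizing function $\phi$ together with KL at a limit point yields
\[
\phi(F(x^k)-F^*)-\phi(F(x^{k+1})-F^*)\ge \phi'(F(x^k)-F^*)\bigl(F(x^k)-F(x^{k+1})\bigr)\ge \frac{(c/2)\|u^k-x^k\|^2}{b\,\|u^{k-1}-x^{k-1}\|},
\]
and Young's inequality together with telescoping produces $\sum_k\|x^{k+1}-x^k\|<\infty$, so $\{x^k\}$ is Cauchy and converges to some $x^*$, which by Theorem~\ref{convergence2}(iii) is a minimizer of \eqref{P0}; this handles the unconditional convergence statement, since proper closed convex functions admit some KL desingularizing function. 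For the rate, taking $\phi(s)=a_0 s^{1-\alpha}$ under the KL-exponent assumption and combining (a) with (b) produces the recursion $(F(x^{k+1})-F^*)^{2\alpha}\le C'\bigl(F(x^k)-F(x^{k+1})\bigr)$; standard analysis (see, e.g., \cite[Theorem~2]{AtBo09}) then gives Q-linear decay of $F(x^k)-F^*$ when $\alpha\le 1/2$ and the rate $O(k^{-1/(2\alpha-1)})$ when $\alpha\in(1/2,1)$. The tail bound $\sum_{j\ge k}\|x^{j+1}-x^j\|\le C''\phi(F(x^k)-F^*)$ finally converts these function-value rates into the claimed rates on $\|x^k-x^*\|$ via the triangle inequality.

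\textbf{Main obstacle.} The crux is step (b) when $\tau_k>0$: KKT holds at $u^k\ne x^{k+1}$, and the $\lambda_i^k$ coming from the linearized constraints may fail complementarity at the retracted point (concretely, $g_i(u^k)=0$ with $\lambda_i^k>0$ can become $g_i(x^{k+1})<0$ after retraction). Producing a genuine element of $\partial F(x^{k+1})$ with norm $O(\|u^k-x^k\|)$ therefore requires either replacing $\lambda^k$ by multipliers adapted to the active set at $x^{k+1}$ and absorbing the mismatch via the second-order gap $\|x^{k+1}-u^k\|=O(\|u^k-x^k\|^2)$, or working instead with an $\epsilon$-subdifferential variant of the KL inequality. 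I expect this to be the technical bottleneck of the argument.
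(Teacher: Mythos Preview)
Your plan follows the standard Attouch--Bolte--Svaiter template (sufficient descent plus a relative-error subgradient bound), whereas the paper takes a completely different route: it never constructs an element of $\partial F(x^{k+1})$ at all. Instead, it exploits the strong convexity of the subproblem's Lagrangian $L_k$ to obtain, for every $\bar x$ in the solution set $S$, a Fej\'er-type inequality of the form
\[
(\betamin+M_4\bar\beta)\bigl(F(x^{k+1})-\bar F^*\bigr)\le \tfrac12\|\bar x-x^k\|^2-\tfrac12\|\bar x-x^{k+1}\|^2+M_4\bar\beta\bigl(F(x^k)-\bar F^*\bigr),
\]
and then defers to \cite[Theorem~3.12]{YuLP20} for both convergence and the rate. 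The retraction error is absorbed into the constant $M_4$ via the bound $\tau_k\le C\|u^k-x^k\|^2\le (2C/c)(F(x^k)-F(x^{k+1}))$, so no transfer of KKT data from $u^k$ to $x^{k+1}$ is ever needed.

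Your ingredient (b) is where the real difficulty lies, and the obstacle you flag is not merely a technical bottleneck---it is genuinely problematic as stated. When $\tau_k>0$ you have $\xi_1^k\in\partial P_1(u^k)$ and $\eta^k\in\mathcal N_C(u^k)$, and you need corresponding elements at $x^{k+1}$ that differ by $O(\|u^k-x^k\|)$. But for nonsmooth $P_1$ and non-polyhedral $C$ (both allowed here), the maps $\partial P_1$ and $\mathcal N_C$ are only outer semicontinuous, not inner semicontinuous: a fixed $\xi_1^k\in\partial P_1(u^k)$ need not be within $O(\|x^{k+1}-u^k\|)$ of $\partial P_1(x^{k+1})$, no matter how small $\|x^{k+1}-u^k\|$ is. The second-order smallness of the displacement buys you nothing here. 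Moreover, even the complementarity issue you raise (\,$\lambda_i^k>0$ with $g_i(x^{k+1})<0$\,) cannot be repaired by ``replacing $\lambda^k$ by multipliers adapted to the active set at $x^{k+1}$'' without an independent argument that such multipliers exist and are bounded---which would amount to re-proving stationarity at $x^{k+1}$. The $\epsilon$-subdifferential route you mention is a possible escape, but it would require a separate development; as written, the proposal does not close this gap. The paper's Fej\'er approach is precisely designed to avoid it.
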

\begin{proof}
Since $C$ is a compact set and $g$ is continuous, the feasible set ${\cal F}$ of \eqref{P0} is compact. Combining this with the fact that $P_1$ is continuous, we have $S:= \Argmin\limits_{x\in {\cal F}} P_1(x) \not=\emptyset$. Let $\Upsilon$ be the set of accumulation point of $\{x^k\}$ for notational simplicity. Using Theorem \ref{convergence2} and \cite[Theorem~28.3]{Ro70}, we see that
\[
\emptyset\not=\Upsilon\subseteq S.
\]

From Remark~\ref{KKT}, Assumptions~\ref{assumption1} and \ref{assumption3}, and the convexity of $P_1$, we see that for each $k$, there exists $\lambda^k \in \R^m_+$ such that $\lambda^k_i(g_i(x^k) + \langle \nabla g_i(x^k), u^k - x^k\rangle) = 0$ for all $i = 1, \ldots, m$ and $u^k$ is a minimizer of the following function:
\[
L_k(x):= P_1(x) + \delta_C(x) + \frac{1}{2\beta_k}\|x - x^k\|^2 + \sum\limits_{i = 1}^m\lambda^k_i(g_i(x^k) + \langle \nabla g_i(x^k), x - x^k\rangle).
\]
Note that $x\mapsto L_k(x)$ is strongly convex with modulus $\frac1{\beta_k}$. Then we see that for any $x\in C$,
\begin{equation}\label{lagrange}
\begin{aligned}
&P_1(u^k) + \frac{1}{2\beta_k}\|u^k - x^k\|^2= L_k(u^k)\leq L_k(x) - \frac{1}{2\beta_k}\|x - u^k\|^2\\
&= P_1(x) + \frac{1}{2\beta_k}\|x - x^k\|^2 + \sum\limits_{i = 1}^m\lambda^k_i(g_i(x^k) + \langle \nabla g_i(x^k), x - x^k\rangle) - \frac{1}{2\beta_k}\|x - u^k\|^2,
\end{aligned}
\end{equation}
where the first equality holds because $\lambda^k_i(g_i(x^k) + \langle \nabla g_i(x^k), u^k - x^k\rangle) = 0$ for $i = 1, \ldots, m$.

Let $F(x):= P_1(x) + \delta_C(x) + \sum\limits_i\delta_{g_i(\cdot) \le0}(x)$ for notational simplicity. For any $\bar{x}\in S$, from Step 2a) and Step 3 of Algorithm \ref{SQP_retract}, we have $\tau_k\in[0, 1]$, $x^{k+1} = (1-\tau_k)u_k +\tau_k\xfeas \in \mathcal{F}$ and

\begin{align}\label{F}
&F(x^{k+1})= P_1(x^{k+1})\overset{\rm (a)}\leq P_1(u^k) + \tau_k(P_1(\xfeas) - P_1(u^k)) + \frac{1}{2\beta_k}\|u^k - x^k\|^2 \nonumber\\
&\overset{\rm (b)}\leq P_1(\bar{x}) + \frac{1}{2\beta_k}\|\bar{x} - x^k\|^2 + \sum_{i=1}^m\lambda^k_i(g_i(x^k) + \langle \nabla g_i(x^k), \bar{x} - x^k\rangle)- \frac{1}{2\beta_k}\|\bar{x} - u^k\|^2 \nonumber\\
 &~~~~+ \tau_k(P_1(\xfeas) - P_1(u^k))\nonumber\\
 &\overset{\rm (c)}\leq P_1(\bar{x}) + \frac{1}{2\beta_k}\|\bar{x} - x^k\|^2 - \frac{1}{2\beta_k}\|\bar{x} - u^k\|^2 + \tau_k(P_1(\xfeas) - P_1(u^k))\nonumber\\
 &\overset{\rm (d)}\leq P_1(\bar{x}) + \frac{1}{2\beta_k}\|\bar{x} - x^k\|^2 - \frac{1}{2\beta_k}\|\bar{x} - x^{k+1}\|^2 + M_2\tau_k + \frac{1}{2\beta_k}(\|\bar{x} - x^{k+1}\|^2 - \|\bar{x} - u^k\|^2),
 \end{align}
where (a) holds because $P_1$ is convex, (b) follows from \eqref{lagrange} with $x = \bar x$, (c) holds because $\lambda^k_i\geq 0$ and $g_i(x^k) + \langle \nabla g_i(x^k), \bar{x} - x^k\rangle \leq g_i(\bar{x}) \leq 0$ (thanks to $\bar{x}\in S \subseteq\mathcal{F}$), (d) holds with $$M_2:= P_1(\xfeas)- \inf\limits_{x\in C}P_1(x) \in [0,\infty),$$ where the finiteness follows from the continuity of $P_1$ and the compactness of $C$.

We now derive an upper bound for \eqref{F}. Notice that
\begin{align}\label{normsqu}
\|\bar{x} - x^{k+1}\|^2 - \|\bar{x} - u^k\|^2& =\left(\|\bar{x} - x^{k+1}\| - \|\bar{x} - u^k\|\right)\left(\|\bar{x} - x^{k+1}\| + \|\bar{x} - u^k\|\right) \nonumber\\
&\overset{\rm (a)}\leq \|x^{k+1} - u^k\|\left(\|\bar{x} - x^{k+1}\| + \|\bar{x} - u^k\|\right)\nonumber\\
&\overset{\rm (b)}= \tau_k\|\xfeas - u^k\|\left(\|\bar{x} - x^{k+1}\| + \|\bar{x} - u^k\|\right)\overset{\rm (c)}\leq 2M_3^2 \tau_k,
\end{align}
where (a) follows from the triangle inequality, (b) holds because $x^{k+1} - u^k = \tau_k(\xfeas - u^k)$ and $\tau_k \geq 0$, (c) holds with $M_3:= \sup\limits_{x,y\in C}\|x - y\| < \infty$, where the finiteness is due to the compactness of $C$.

Now combining \eqref{F} and \eqref{normsqu}, we have
\begin{align}
F(x^{k+1})&\leq P_1(\bar{x}) + \frac{1}{2\beta_k}\|\bar{x} - x^k\|^2 - \frac{1}{2\beta_k}\|\bar{x} - x^{k+1}\|^2 + \left(M_2 + \frac{M_3^2}{\beta_k}\right)\tau_k \nonumber\\
&\overset{\rm (a)}\leq P_1(\bar{x}) + \frac{1}{2\beta_k}\|\bar{x} - x^k\|^2 - \frac{1}{2\beta_k}\|\bar{x} - x^{k+1}\|^2 + \left(M_2 + \frac{M_3^2}{\beta_k}\right)\frac{L\|u^k - x^k\|^2}{-2\max\limits_{1 \le i \le m}\{g_i(\xfeas)\}}\nonumber\\
&\overset{\rm (b)}\leq P_1(\bar{x}) + \frac{1}{2\beta_k}\|\bar{x} - x^k\|^2 - \frac{1}{2\beta_k}\|\bar{x} - x^{k+1}\|^2 + M_4(P_1(x^k) - P_1(x^{k+1})), \nonumber
\end{align}
where (a) follows from \eqref{taubound0}, (b) holds because $\|u^k - x^k\|^2 \leq \frac{2}{c}(P(x^k) - P(x^{k+1}))$ from Step 2b) of Algorithm~\ref{SQP_retract} and we define $M_4 := \left(M_2 + \frac{M_3^2}{\betamin}\right)\frac{L}{-c\max\limits_{1 \le i \le m}\{g_i(\xfeas)\}}\in (0,\infty)$, with $\betamin$ given in Remark~\ref{gamma}.

Rearranging terms in the above inequality and noting $\bar{F}^* := \inf F = P_1(\bar{x})$ whenever $\bar{x}\in S$, we have
\[
\beta_k(F(x^{k+1}) - \bar{F}^*)\leq \frac{1}{2}\|\bar{x} - x^k\|^2 - \frac{1}{2}\|\bar{x} - x^{k+1}\|^2 + M_4\beta_k(F(x^k) - F(x^{k+1})).
\]
Recall that $\betamin\leq\beta_k \leq\bar{\beta}$, where $\betamin$ is defined in Remark \ref{gamma} and $\bar{\beta}$ is a parameter in Step 0 of Algorithm \ref{SQP_retract}. Then for any $\bar{x} \in S$, we have
\[
\betamin(F(x^{k+1}) - \bar{F}^*)\leq \frac{1}{2}\|\bar{x} - x^k\|^2 - \frac{1}{2}\|\bar{x} - x^{k+1}\|^2 + M_4\bar{\beta}(F(x^k) - F(x^{k+1})).
\]
Rearranging terms in the above inequality, we have
\begin{equation*}
(\betamin + M_4\bar{\beta})(F(x^{k+1}) - \bar{F}^*)\leq \frac{1}{2}\|\bar{x} - x^k\|^2 - \frac{1}{2}\|\bar{x} - x^{k+1}\|^2 + M_4\bar{\beta}(F(x^k) - \bar{F}^*).
\end{equation*}
The desired results now follow from the same arguments as in the proof of \cite[Theorem~3.12]{YuLP20}, starting from \cite[Eq.~(3.16)]{YuLP20}. This completes the proof.
\end{proof}

%\section{When $g$ is nonconvex and $\xfeasss$ is available}

\section{Extension: Structured non-convex constraint with strong strict feasibility}\label{sec4}

In this section, we consider DC optimization problem with structured nonconvex inequality constraint and geometric constraint. As we will see later, our setting covers, in particular, several important model problems such as the DC optimization problem with Lorentzian norm and box constraints. In order to simplify notation, we only consider the case where one single nonconvex inequality constraint is present in \eqref{P0}.

More explicitly, we consider the following special case of \eqref{P0}:
\begin{equation}\label{P1}
  \begin{array}{rl}
\min\limits_{x\in\R^n} & P(x) := P_1(x) - P_2(x)\\
{\rm s.t.} & \ell(Ax - b)\leq \sigma,\\
           & x\in C,
  \end{array}
\end{equation}
where $P_1$ and $P_2$ are {\color{black}finite-valued convex} functions, $C$ is a nonempty compact convex set, $A\in\R^{p\times n}$, $b\in\R^p$,  $\sigma >0$, the set ${\cal F}_1:= \{x\in C:\; \ell(Ax - b)\le \sigma\}\neq \emptyset$, and $\ell$ also satisfies the following Assumption~\ref{gassum}:
\begin{assumption}\label{gassum}
The function $\ell:\R^p\to \R$ is defined as
\[
\ell(u):= \sum_{i=1}^{p} {\color{black} \varphi}(u_i^2),
\]
where ${\color{black} \varphi}:\R_+\rightarrow \R_+$ satisfies the following conditions:
\begin{enumerate}[{\rm (i)}]
  \item ${\color{black} \varphi}$ is a continuous concave function with ${\color{black} \varphi}(0) = 0$;
  \item ${\color{black} \varphi}$ is continuously differentiable with positive derivative on $(0, \infty)$;
  \item The limit $\vartheta:= \lim\limits_{t\downarrow 0}{\color{black} \varphi}'(t)$ exists, and the function $t\mapsto {\color{black} \varphi}'_+(t)$ is locally Lipschitz on $[0,\infty)$.\footnote{Here, and throughout this paper, we use ${\color{black} \varphi}'_+(t)$ to denote the right hand derivative of ${\color{black} \varphi}$ at $t$, which is defined as ${\color{black} \varphi}'_+(t)=\lim\limits_{h \rightarrow 0^+}\frac{{\color{black} \varphi}(t+h)-{\color{black} \varphi}(t)}{h}$.}
\end{enumerate}
\end{assumption}

We present below some concrete examples of ${\color{black} \varphi}$ that satisfies the conditions in Assumption~\ref{gassum}.
\begin{example}
As in \cite{AhPX17}, for a given $a > 0$, we consider ${\color{black} \varphi}:\R_+\rightarrow \R_+$ given as follows:
\[
{\color{black} \varphi}(u) = \frac{(a + 1)u}{a + u}.
\]
One can see that ${\color{black} \varphi}$ is a continuous concave function and is differentiable with positive derivative on $(0, \infty)$ as its first and second derivatives exist:
\[
{\color{black} \varphi}'(u) = \frac{a(a+1)}{(a+u)^2}\ \text{ and }\  {\color{black} \varphi}''(u) = -\frac{2a(a+1)}{(a+u)^3}.
\]
From this we see further that ${\color{black} \varphi}'$ is Lipschitz continuous on $[0,\infty)$ with modulus $\frac{2(a+1)}{a^2}$. Thus ${\color{black} \varphi}$ satisfies the conditions in Assumption \ref{gassum}.
\end{example}

\begin{example}\label{Loren}
We consider the function ${\color{black} \varphi}:\R_+\to \R$ defined as follows for a given $\gamma$ \cite{CaWB08}:
\begin{equation*}
{\color{black} \varphi}(u) = \log(u + \gamma^2) - \log(\gamma^2).
\end{equation*}
One can see that ${\color{black} \varphi}$ satisfies the conditions in Assumption \ref{gassum} and ${\color{black} \varphi}'$ is Lipschitz continuous on $[0,\infty)$ with modulus $\frac{1}{\gamma^4}$. In this case, we have
\[
\ell(u) = \sum_{i=1}^p \log\left(1 + \frac{u_i^2}{\gamma^2}\right),
\]
which is the Lorentzian norm: This function is used as the loss function for measurement noise following the Cauchy distribution; see \cite{CaBA10,CaRA16}.
\end{example}
Recall that the Slater condition (i.e., Assumption~\ref{assumption1}) was instrumental in deriving the first-order optimality conditions of \eqref{P0} and analyzing convergence of Algorithm~\ref{SQP_retract}.
Here, for analyzing \eqref{P1} and developing an analogue of Algorithm~\ref{SQP_retract} for solving it, we will consider the following stronger strict feasibility assumption on the feasible set ${\cal F}_1$ of \eqref{P1}.
\begin{assumption}\label{assumption4}
{\color{black} There exists $\xfeasss\in C$ such that $A\xfeasss = b$ and $\xfeasss$ is explicitly known.}
\end{assumption}

Before presenting our algorithm for solving \eqref{P1} under Assumptions~\ref{gassum} and \ref{assumption4}, to simplify the presentation of the algorithm, we first introduce some auxiliary notation.

Consider \eqref{P1} and suppose Assumption~\ref{gassum} holds. For any $y\in\R^n$, let
\begin{equation}\label{defomega}
\omega_i^y := {\color{black} \varphi}_+'\big((a_i^T y - b_i)^2\big), ~i= 1,\ldots, p,
\end{equation}
\begin{equation}\label{defsigma}
\tilde{\sigma}^y := \sigma - \ell(Ay - b) + \sum_{i=1}^p \omega_i^y(a_i^Ty - b_i)^2,
\end{equation}
and define the function $\ell^y:\R^p\to \R$ by
\begin{equation}\label{defell}
\ell^y(u):= \sum_{i=1}^p \omega_i^yu_i^2,
\end{equation}
where $A$ and $b$ are as in \eqref{P1} and $a_i^T$ denotes the $i$th row of $A$.
Note that under Assumption~\ref{gassum}, the function ${\color{black} \varphi}$ is concave. Hence, for any $x,y\in\R^n$ and each $i=1,\ldots,p$, we have from the super-gradient inequality that
\[
{\color{black} \varphi}\big((a_i^T x - b_i)^2\big) \leq {\color{black} \varphi}\big((a_i^T y - b_i)^2\big) + \omega_i^y\left[(a_i^T x - b_i)^2 -(a_i^T y - b_i)^2\right],
\]
Summing both sides of the above inequality from $i=1$ to $p$, we obtain that
\begin{equation}\label{ell}
\begin{aligned}
\ell(Ax - b) &\leq \ell(Ay - b) + \sum_{i=1}^p \omega_i^y\left[(a_i^T x - b_i)^2 -(a_i^T y - b_i)^2\right]\\
& = \ell(Ay - b) + \ell^y(Ax - b) - \ell^y(Ay-b).
\end{aligned}
\end{equation}

We are now ready to present our algorithm for solving \eqref{P1} under Assumptions~\ref{gassum} and \ref{assumption4}, which is stated as Algorithm~\ref{SQP_retract2} below. This is again an SQP-type method and is also motivated by the retraction strategy. Indeed, the subproblem \eqref{subproblem4} takes the same form as the subproblem \eqref{subproblem2} in Algorithm~\ref{SQP_retract}, with $\ell(Ax - b)$ in place of $g(x)$. The key difference lies in Step 2a). In Algorithm~\ref{SQP_retract}, we ``retract" by taking convex combinations so that $\widetilde x$ satisfies the convex constraint $\{x\in C:\; g_i(x) \le 0,\ \ i = 1,\ldots,m\}$. However, this approach cannot be directly adapted for \eqref{P1}, because $\{x\in C:\; \ell(Ax - b)\le \sigma\}$ is nonconvex. Fortunately, one can deduce from \eqref{defomega}, \eqref{defsigma}, \eqref{defell} and \eqref{ell} that $\ell(Ax - b) - \sigma \le \ell^y(Ax - b) - \widetilde{\sigma}^y$ (see Lemma~\ref{ell+}(v) below); moreover, the function $\ell^y$ is always convex (see Lemma~\ref{ell+}(ii) below). Thus, in Step 2a) of our Algorithm~\ref{SQP_retract2}, we ``retract" so that $\widetilde x$ satisfies the \emph{convex} constraint $\mathcal{F}_1^{y} := \{x\in C:\; \ell^y(Ax - b)\le \widetilde\sigma^y\}$ for some suitable $y \in \R^n$. Since $\ell^y$ is always nonnegative, for this retraction strategy to make sense, we have to at least show that $\widetilde\sigma^y > 0$ at each iteration. We prove this together with some auxiliary facts concerning $\ell^y$ in the next lemma.
\begin{algorithm}
\caption{ FPA with retraction for \eqref{P1} under  Assumptions~\ref{gassum} and \ref{assumption4}}\label{SQP_retract2}
\begin{algorithmic}
\STATE
\begin{description}
  \item[\bf Step 0.] Choose $x^0\in \mathcal{F}_1$, $0 < \underline{\beta} < \overline{\beta}$, $c > 0$ and $\eta \in (0, 1)$. Choose $\xfeasss$ as in Assumption~\ref{assumption4} and set $k = 0$.
  \item[\bf Step 1.] Pick any $\xi^k\in \partial P_2(x^k)$. Compute $\omega_i^{x^k}$ as in \eqref{defomega} and $\tilde{\sigma}^{x^k}$ as in \eqref{defsigma}. Choose $\beta_k^0\in[\underline{\beta}, \overline{\beta}]$ arbitrarily and set $\widetilde{\beta} = \beta_k^0$.
  \item[\bf Step 2.] Compute
  \begin{equation}\label{subproblem4}
  \begin{array}{rl}
  \widetilde{u}= \argmin\limits_{x\in\R^n} & P_1(x) - \langle \xi^k, x - x^k\rangle + (2\widetilde{\beta})^{-1}\|x - x^k\|^2\\ [2pt]
      {\rm s.t.}& \ell(Ax^k - b) + \langle A^T\nabla\ell(Ax^k - b), x-x^k\rangle \le \sigma,\\ [2pt]
      & x\in C.
  \end{array}
  \end{equation}
  \begin{enumerate}[\bf {Step 2}a)]
    \item If $\ell^{x^k}(A\widetilde{u} - b) \le \tilde{\sigma}^{x^k}$, set $\widetilde{x}=\widetilde{u}$ and $\widetilde\tau = 0$; else, find $\widetilde{\tau}\in(0,1)$ such that $\widetilde{x}:=(1 - \widetilde{\tau})\widetilde{u} + \widetilde{\tau} \xfeasss$ satisfies $\ell^{x^k}(A\widetilde{x} - b) = \tilde{\sigma}^{x^k}$.
    \item If $P(\widetilde{x}) \leq P(x^k) - \frac{c}{2}\|\widetilde{u} - x^k\|^2$, go to \textbf{Step 3}; else, update $\widetilde{\beta}\leftarrow \eta\widetilde{\beta}$ and go to \textbf{Step 2}.
  \end{enumerate}
  \item[\bf Step 3.] Set $x^{k+1} = \widetilde{x}$, $u^k = \widetilde{u}$, $\beta_k = \widetilde{\beta}$, $\tau_k = \widetilde{\tau}$. Update $k \leftarrow k+1$ and go to \textbf{Step 1}.
\end{description}
\end{algorithmic}
\end{algorithm}

%Before proving the well-definedness of the Algorithm \ref{SQP_retract2}, we collect some facts concerning $\ell^y$ and $\tilde \sigma^y$ in the next lemma, which is crucial for the rest of the proof.
\begin{lemma}\label{ell+}
 Consider \eqref{P1} and let ${\cal F}_1$ denote its feasible set. Assume that Assumption~\ref{gassum} holds. Let $y\in \R^n$, and let $\tilde \sigma^y$ and $\ell^y$ be defined in \eqref{defomega} and \eqref{defell} respectively. Then the following statements hold:
\begin{enumerate}[{\rm (i)}]
  \item It holds that $\sigmamin:= \inf\limits_{v\in \mathcal{F}_1}\tilde{\sigma}^v > 0$.
  \item $\ell^y(0) = 0$ and $\ell^y$ is convex.
  \item $\ell^y$ is smooth and there exists $L_\ell > 0$ (independent of $y$)\footnote{Indeed, one can take $L_\ell = 2\vartheta$ with $\vartheta$ as in Assumption~\ref{gassum}.} such that for all $u$, $v\in \R^p$, we have
  \begin{equation}\label{Lipineq}
  \|\nabla \ell^y(u) - \nabla \ell^y(v)\| \le L_\ell\|u - v\|.
  \end{equation}
  \item $\ell(Ay - b) - \sigma = \ell^y(Ay - b) - \tilde{\sigma}^y$, and $\nabla\ell(Ay - b) = \nabla\ell^y(Ay - b)$.
  \item $\ell(Ax - b) - \sigma \leq \ell^y(Ax - b) - \tilde{\sigma}^y$ for any $x\in\R^n$.
\end{enumerate}
\end{lemma}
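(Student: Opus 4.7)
The plan is to work through the five items essentially in the order $(\mathrm{ii}) \to (\mathrm{iii}) \to (\mathrm{iv}) \to (\mathrm{v}) \to (\mathrm{i})$, since the easier structural facts feed into the harder uniform positivity statement. Throughout, the key observation is that $\omega_i^y = \varphi'_+((a_i^T y - b_i)^2) \ge 0$ (from Assumption \ref{gassum}(ii)) and, because $\varphi$ is concave on $[0,\infty)$ with $\varphi(0)=0$ and $\varphi'_+$ non-increasing, $\omega_i^y \le \vartheta$ for all $y$.

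For (ii), $\ell^y(0)=0$ follows by direct substitution, and $\ell^y$ is convex as a non-negative linear combination of the convex maps $u\mapsto u_i^2$. For (iii), differentiate termwise to get $\nabla \ell^y(u) = (2\omega_i^y u_i)_{i=1}^p$; then $\|\nabla \ell^y(u)-\nabla \ell^y(v)\|^2 = 4\sum_i (\omega_i^y)^2 (u_i-v_i)^2 \le 4\vartheta^2 \|u-v\|^2$, giving the advertised constant $L_\ell = 2\vartheta$ independent of $y$. For (iv), both identities are algebraic: substituting $u=Ay-b$ into the definition of $\ell^y$ and $\tilde\sigma^y$ gives the claimed difference, and the gradient identity follows because $\nabla \ell(Ay-b)_i = 2\varphi'_+((a_i^T y - b_i)^2)(a_i^T y - b_i) = 2\omega_i^y(a_i^T y - b_i) = \nabla \ell^y(Ay-b)_i$ (extending $\varphi'_+(0)=\vartheta$, at which any ambiguity is killed by the factor $a_i^T y - b_i = 0$). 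For (v), apply the concave supergradient bound already written out in \eqref{ell}, namely $\ell(Ax-b) \le \ell(Ay-b) + \ell^y(Ax-b) - \ell^y(Ay-b)$, and then rewrite $\ell(Ay-b)-\ell^y(Ay-b) = \sigma - \tilde\sigma^y$ using (iv); rearranging yields the inequality.

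The substantive step is (i). First note that $\mathcal{F}_1 = \{x\in C:\ell(Ax-b)\le\sigma\}$ is compact, since $C$ is compact and $\ell$ is continuous. The map $v\mapsto \tilde\sigma^v$ is continuous on $\R^n$: the term $\ell(Av-b)$ is continuous, and each $\omega_i^v(a_i^T v - b_i)^2$ is continuous because $\varphi'_+$ is locally Lipschitz (Assumption \ref{gassum}(iii)), hence continuous on $[0,\infty)$. Therefore the infimum defining $\sigmamin$ is attained at some $v^*\in\mathcal{F}_1$, and it suffices to argue pointwise positivity at $v^*$. Split into two cases: if $\ell(Av^*-b)<\sigma$, then $\tilde\sigma^{v^*} \ge \sigma-\ell(Av^*-b) > 0$ because the remaining sum $\sum_i \omega_i^{v^*}(a_i^T v^*-b_i)^2$ is non-negative. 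Otherwise $\ell(Av^*-b)=\sigma>0$, so some term $\varphi((a_{i_0}^T v^*-b_{i_0})^2)$ is strictly positive; since $\varphi(0)=0$ and $\varphi$ is strictly increasing on $[0,\infty)$ (concave with strictly positive derivative on $(0,\infty)$), this forces $(a_{i_0}^T v^*-b_{i_0})^2>0$, whence $\omega_{i_0}^{v^*}=\varphi'_+((a_{i_0}^T v^*-b_{i_0})^2)>0$; then $\tilde\sigma^{v^*}=\sum_i\omega_i^{v^*}(a_i^T v^*-b_i)^2 \ge \omega_{i_0}^{v^*}(a_{i_0}^T v^*-b_{i_0})^2>0$.

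The only slightly delicate point I foresee is the case analysis in (i): one must recognize that the cancellation $\sigma-\ell(Av^*-b)=0$ on the active boundary leaves behind the sum $\sum_i\omega_i^{v^*}(a_i^T v^*-b_i)^2$, and crucially that $\sigma>0$ rules out the degenerate possibility $Av^*=b$. Everything else reduces to direct computation or invocation of the concave supergradient inequality used to derive \eqref{ell}.
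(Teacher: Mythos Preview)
Your proof is correct and follows essentially the same approach as the paper's own proof; the only differences are presentational (you reorder the items as $(\mathrm{ii})\to(\mathrm{iii})\to(\mathrm{iv})\to(\mathrm{v})\to(\mathrm{i})$, make the strict-feasibility case $\ell(Av^*-b)<\sigma$ in (i) explicit where the paper leaves it implicit, and derive the bound $\omega_i^y\le\vartheta$ directly from concavity where the paper cites \cite[Lemma~2.2]{YuPo19}).
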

\begin{proof}
(i): Let $v \in \mathcal{F}_1$. Since ${\color{black} \varphi}:\R_+\to \R_+$ is a continuous concave function with ${\color{black} \varphi}(0) = 0$ that is continuously differentiable with positive derivative on $(0, \infty)$, we have $\omega_i^v > 0$ for all $i \notin I_0$, where $I_0=\{i: a_i^Tv-b_i=0\}$. Next, note that $\ell(Av - b) \leq \sigma$. In addition, if $v$ satisfies $\ell(Av - b) = \sigma$, then there must exist $i_0$ such that $a_{i_0}^Tv - b_{i_0} \not= 0$ (since $\ell(0) = 0 < \sigma$), hence
\[
\tilde{\sigma}^v = \sigma - \ell(Av - b) + \sum_{ i \notin I_{0}} \omega_i^v(a_i^Tv - b_i)^2 > 0.
 \]
Finally, from \eqref{defsigma}, we have $z\mapsto \tilde{\sigma}^z$ is continuous. This together with the compactness of $C$ shows that $\sigmamin:= \min\limits_{v\in \mathcal{F}_1}\tilde{\sigma}^v > 0$.

(ii): From \eqref{defell}, one can see that $\ell^y(0) = 0$. The convexity of $\ell^y$ follows from \eqref{defomega} and Assumption~\ref{gassum}.

(iii): By the definition of $\ell^y$, we have $\nabla \ell^y (u) = (2\omega_i^yu_i)_{i=1}^p$, and $\nabla^2\ell^y (u) = 2\Diag(\omega^y)$.\footnote{Here, $\Diag(\omega^y)$ is the diagonal matrix whose $i$-th diagonal entry is $\omega^y_i$.} Now, according to \cite[Lemma~2.2]{YuPo19}, for any $y$, we have that $0\le\omega_i^y\leq \lim_{t\downarrow 0}{\color{black} \varphi}'(t)=\vartheta$. This implies that \eqref{Lipineq} holds with $L_{\ell} = 2\vartheta$.

(iv): The first equation follows from \eqref{defsigma} and the definition of $\ell^y$. As for the second equation, note that for any $y\in\R^n$, using the definitions of $\ell$ and $\ell^y$, we have $\nabla\ell(u) = \left(2{\color{black} \varphi}_+'(u_i^2)u_i\right)_{i=1}^p$ and $\nabla\ell^y(u) = \left(2{\color{black} \varphi}_+'((a_i^Ty - b_i)^2)u_i\right)_{i=1}^p$. Therefore,
\[
 \nabla\ell(Ay - b) = \left(2{\color{black} \varphi}_+'((a_i^Ty - b_i)^2)(a_i^Ty - b_i)\right)_{i=1}^p = \nabla\ell^y(Ay - b).
\]

(v): From \eqref{ell}, we see that $\ell(Ax - b) - \ell^y(Ax - b) \leq \ell(Ay - b) - \ell^y(Ay - b)$ for any $x\in \R^n$. Combining this with item (iv), we deduce that $\ell(Ax - b) - \sigma \leq \ell^y(Ax - b) - \tilde{\sigma}^y$ for any $x\in \R^n$. This completes the proof.
\end{proof}

Now, we prove the well-definedness of Algorithm~\ref{SQP_retract2}. Recall that ${\cal F}_1$ denotes the feasible set of \eqref{P1}. To prove well-definedness, it suffices to show that, if an $x^k\in {\cal F}_1$  for some $k\ge 0$, then the subproblem~\eqref{subproblem4} has a unique solution, $\widetilde\tau$ exists in Step 2a) and the line-search step in Step 2b) terminates in finitely many inner iterations so that $x^{k+1}$ can be generated: Note that such an $x^{k+1}$ belongs to ${\cal F}_1$ in view of Step 2a) and Lemma~\ref{ell+}(v). This together with an induction argument would establish well-definedness.

\begin{theorem}[{{\bf Well-definedness of Algorithm~\ref{SQP_retract2}}}]\label{welldef2}
Consider \eqref{P1} and suppose that Assumptions~\ref{gassum} and \ref{assumption4} hold. Suppose that an $x^k\in {\cal F}_1$ is generated {\color{black} at the beginning of the $k$-th iteration} of Algorithm~\ref{SQP_retract2} for some $k\ge 0$. Then the following statements hold:
\begin{enumerate}[{\rm (i)}]
   \item The convex subproblem \eqref{subproblem4} has a unique solution. Moreover, $\xfeasss$ from Assumption~\ref{assumption4} is a Slater point of the feasible set of \eqref{subproblem4}.
   \item If $\ell^{x^k}(A\widetilde{u} - b) > \tilde{\sigma}^{x^k}$, then there exists $\widetilde{\tau}\in (0, 1)$ such that $\widetilde{x}:=(1 - \widetilde{\tau})\widetilde{u} + \widetilde{\tau} \xfeasss$ with $\ell^{x^k}(A\widetilde{x} - b) = \tilde{\sigma}^{x^k}$. Moreover, for any such $\widetilde\tau$, it holds that
       \[
       \widetilde{\tau} \leq \frac{L_{\ell}\|A\|^2}{2\sigmamin}\|\widetilde{u} - x^k\|^2
       \ \ {\it and}\ \ \|\widetilde{x} - \widetilde{u}\| \leq \frac{ML_{\ell}\|A\|^2}{2\sigmamin}\|\widetilde{u} - x^k\|^2,
       \]
       where $M := \sup\limits_{x\in C}\|x - \xfeasss\|$, and $\sigmamin$ and $L_\ell$ are defined as in Lemma~\ref{ell+}.
   \item Step 2 of algorithm \ref{SQP_retract2} terminates in finitely many inner iterations.
   \item For each $\widetilde{\beta} > 0$, the subproblem \eqref{subproblem4} has a Lagrange multiplier $\widetilde\lambda\geq 0$ and it satisfies
       \begin{equation*}
       0\in \partial P_1(\widetilde{u}) - \xi^k +\frac{1}{\widetilde{\beta}}(\widetilde{u} - x^k) +\widetilde\lambda A^T\nabla\ell(Ax^k - b) + \mathcal{N}_C(\widetilde{u}),
       \end{equation*}
       and
       \begin{equation*}
       \widetilde\lambda\big(\ell(Ax^k - b) + \langle A^T\nabla\ell(Ax^k - b), \widetilde{u}-x^k\rangle - \sigma\big) = 0.
       \end{equation*}
 \end{enumerate}
 In particular, an $x^{k+1}\in {\cal F}_1$ can be generated at the end of the $(k + 1)$-th iteration of Algorithm~\ref{SQP_retract2}.
\end{theorem}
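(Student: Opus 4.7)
The plan is to follow the structure of the proof of Theorem~\ref{welldef1}, treating items (i)--(iv) in order, with Lemma~\ref{ell+} supplying the bridge between the original nonconvex constraint and the surrogate convex one. The main obstacle I anticipate is item (i): the assumption only gives that $\xfeasss$ satisfies the \emph{equality} $A\xfeasss = b$, and one must upgrade this to \emph{strict} feasibility in the \emph{linearized} constraint of \eqref{subproblem4}. The trick is to use Lemma~\ref{ell+}(iv) to identify $\nabla\ell(Ax^k-b)$ with $\nabla\ell^{x^k}(Ax^k-b)$, apply the subgradient inequality for the convex function $\ell^{x^k}$ (Lemma~\ref{ell+}(ii)) at $Ax^k-b$ evaluated at $A\xfeasss - b = 0$, and then use Lemma~\ref{ell+}(iv) once more to convert $\ell^{x^k}(Ax^k - b)$ back to $\ell(Ax^k-b) - \sigma + \tilde\sigma^{x^k}$. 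This should yield
\[
\ell(Ax^k-b) + \langle A^T\nabla\ell(Ax^k-b),\, \xfeasss - x^k\rangle \le \sigma - \tilde\sigma^{x^k},
\]
and strict feasibility then follows because $\tilde\sigma^{x^k}\ge \sigmamin > 0$ by Lemma~\ref{ell+}(i). Existence and uniqueness of $\widetilde u$ is immediate from strong convexity and continuity of the objective in \eqref{subproblem4} over the resulting nonempty closed convex feasible set.

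For item (ii), existence of $\widetilde\tau\in(0,1)$ follows from the intermediate value theorem applied to the continuous map $t\mapsto \ell^{x^k}(A((1-t)\widetilde u + t\xfeasss) - b)$, which by hypothesis exceeds $\tilde\sigma^{x^k}$ at $t=0$ and equals $\ell^{x^k}(0) = 0 < \tilde\sigma^{x^k}$ at $t=1$ by Lemma~\ref{ell+}(i)--(ii). For the quantitative bound, convexity of $\ell^{x^k}$ gives $\tilde\sigma^{x^k}\le (1-\widetilde\tau)\ell^{x^k}(A\widetilde u - b)$, so I would bound the numerator $\ell^{x^k}(A\widetilde u - b) - \tilde\sigma^{x^k}$ by $\tfrac{L_\ell\|A\|^2}{2}\|\widetilde u - x^k\|^2$: apply the descent lemma for $\ell^{x^k}$ (using Lemma~\ref{ell+}(iii) and $\|A(\widetilde u - x^k)\|\le\|A\|\|\widetilde u - x^k\|$), rewrite the linear and constant terms via Lemma~\ref{ell+}(iv), and cancel $\ell(Ax^k-b)-\sigma$ together with the cross term using feasibility of $\widetilde u$ in \eqref{subproblem4}. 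The denominator $\ell^{x^k}(A\widetilde u-b)\ge \tilde\sigma^{x^k}\ge \sigmamin$ gives the stated bound on $\widetilde\tau$, and $\|\widetilde x - \widetilde u\| = \widetilde\tau\|\xfeasss - \widetilde u\|\le M\widetilde\tau$ finishes item (ii).

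Item (iii) is then a verbatim adaptation of the descent chain in Theorem~\ref{welldef1}(iii): use convexity of $P_2$ to upper-bound $-P_2(\widetilde x)$ by the subgradient inequality at $x^k$, convexity of $P_1$ to split $P_1(\widetilde x)$ along the segment from $\widetilde u$ to $\xfeasss$, the strong-convexity comparison between $\widetilde u$ and $x^k$ (which is feasible for \eqref{subproblem4} since $x^k\in\mathcal F_1$), and finally the bound on $\widetilde\tau$ from item (ii) to absorb the $\widetilde\tau M_1$ term into a multiple of $\|\widetilde u - x^k\|^2$. For $\widetilde\beta$ small enough (independent of $k$), the coefficient of $\|\widetilde u - x^k\|^2$ exceeds $c/2$, so Step 2b) terminates after finitely many inner iterations. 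Item (iv) is standard KKT theory: since \eqref{subproblem4} is a convex program whose Slater condition was established in item (i), \cite[Corollary~28.2.1, Theorem~28.3]{Ro70} delivers the Lagrange multiplier and the two displayed relations.

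To conclude $x^{k+1}\in \mathcal F_1$, observe that in either branch of Step 2a) we have $\ell^{x^k}(A\widetilde x - b) \le \tilde\sigma^{x^k}$; Lemma~\ref{ell+}(v) then transfers this to $\ell(A\widetilde x - b) - \sigma \le \ell^{x^k}(A\widetilde x - b) - \tilde\sigma^{x^k}\le 0$, while $\widetilde x = (1-\widetilde\tau)\widetilde u + \widetilde\tau\xfeasss\in C$ by convexity of $C$. An induction on $k$ (with the hypothesis $x^k\in\mathcal F_1$) then yields the final claim of the theorem.
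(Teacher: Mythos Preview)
Your proposal is correct and follows essentially the same approach as the paper's own proof: both use Lemma~\ref{ell+}(iv) to translate between $\ell$ and $\ell^{x^k}$ at $x^k$ in order to establish the Slater property in (i), the intermediate value theorem plus convexity of $\ell^{x^k}$ and the descent lemma (via Lemma~\ref{ell+}(iii)) for the bounds in (ii), the identical descent chain from Theorem~\ref{welldef1}(iii) for (iii), and standard KKT theory from \cite{Ro70} for (iv). The only cosmetic difference is that in (ii) you write the convexity inequality as $\tilde\sigma^{x^k}\le (1-\widetilde\tau)\ell^{x^k}(A\widetilde u - b)$ and bound the denominator $\ell^{x^k}(A\widetilde u - b)\ge \sigmamin$ directly, whereas the paper first isolates $\widetilde\tau$ and then lower-bounds $\tilde\sigma^{x^k}\ge\sigmamin$; both routes yield the same final estimate.
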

\begin{proof}
(i): We first observe that the feasible region of \eqref{subproblem4} is  closed and convex. We next show that it is nonempty by showing that $\xfeasss$ is a Slater point of the feasible set of \eqref{subproblem4}.
To this end, we note from the convexity of $\ell^{x^k}$ (see Lemma~\ref{ell+}(ii)) that
\begin{equation}\label{relxxx}
\ell^{x^k}(Ax^k - b) + \langle A^T\nabla\ell^{x^k}(Ax^k - b), \xfeasss - x^k\rangle\leq\ell^{x^k}(A\xfeasss - b) \overset{\rm (a)}= \ell^{x^k}(0) \overset{\rm (b)}= 0 \overset{\rm (c)}< \tilde{\sigma}^{x^k},
\end{equation}
where (a) follows from Assumption~\ref{assumption4}, (b) follows from Lemma~\ref{ell+}(ii), and (c) holds because of Lemma~\ref{ell+}(i) and the fact that $x^k\in {\cal F}_1$. Hence,
\[
\begin{aligned}
  &\ell(Ax^k - b) + \langle A^T\nabla\ell(Ax^k - b), \xfeasss - x^k\rangle\\
  & = \sigma + \ell^{x^k}(Ax^k - b)- \tilde{\sigma}^{x^k} + \langle A^T\nabla\ell^{x^k}(Ax^k - b), \xfeasss - x^k\rangle < \sigma,
\end{aligned}
\]
where the equality follows from Lemma~\ref{ell+}(iv) and the inequality holds thanks to \eqref{relxxx}.
Since we also have $\xfeasss\in C$ by assumption, we have shown that $\xfeasss$ is a Slater point for \eqref{subproblem4}. Next, since $P_1$ is convex and continuous, it holds that $x\mapsto P_1(x) - \langle \xi^k, x - x^k\rangle + (2\widetilde{\beta})^{-1}\|x - x^k\|^2$ is strongly convex and continuous, for any $\widetilde{\beta} > 0$. Therefore, \eqref{subproblem4} has a unique solution.

(ii): Suppose that $\ell^{x^k}(A\widetilde{u} - b) > \tilde{\sigma}^{x^k}$. By Assumption \ref{assumption4}, Lemma~\ref{ell+}(i) and (ii), and the fact that $x^k\in\mathcal{F}_1$, we have
\begin{equation}\label{circstrictineq}
\ell^{x^k}(A\xfeasss - b) - \tilde{\sigma}^{x^k} = -\tilde{\sigma}^{x^k} < 0.
\end{equation}
Letting $x_{\tau}:=(1 - \tau)\widetilde{u} + \tau \xfeasss$, then $\ell^{x^k}(Ax_{1} - b) - \tilde{\sigma}^{x^k} <0$ and $\ell^{x^k}(Ax_{0} - b) - \tilde{\sigma}^{x^k} >0$. Hence, there exists $\widetilde{\tau}\in (0, 1)$ such that $\widetilde{x} := (1 - \widetilde{\tau})\widetilde{u} + \widetilde{\tau} \xfeasss$ satisfies $\ell^{x^k}(A\widetilde{x} - b) - \tilde{\sigma}^{x^k} = 0$.

Now, recall from Lemma~\ref{ell+}(ii) that $\ell^{x^k}$ is convex and thus
\[
0 = \ell^{x^k}(A\widetilde{x} - b) - \tilde{\sigma}^{x^k} \leq (1 - \widetilde{\tau})\left(\ell^{x^k}(A\widetilde{u} - b) - \tilde{\sigma}^{x^k}\right) + \widetilde{\tau}\left(\ell^{x^k}(A\xfeasss - b) - \tilde{\sigma}^{x^k}\right).
\]
Upon rearranging terms in the above inequality, we see that
\begin{equation}\label{taubound}
\begin{aligned}
\widetilde{\tau}&\overset{\rm (a)}\leq\frac{\ell^{x^k}(A\widetilde{u} - b) - \tilde{\sigma}^{x^k}}{\left(\ell^{x^k}(A\widetilde{u} - b) - \tilde{\sigma}^{x^k}\right) + \tilde{\sigma}^{x^k}} \overset{\rm (b)}\leq \frac{\ell^{x^k}(A\widetilde{u} - b) - \tilde{\sigma}^{x^k}}{\tilde{\sigma}^{x^k}} \\
&\overset{\rm (c)}\le \frac{\ell^{x^k}(Ax^k - b) + \langle A^T\nabla \ell^{x^k}(Ax^k - b),\widetilde{u}-x^k\rangle - \tilde{\sigma}^{x^k} + (L_{\ell}\|A\|^2/2)\|\widetilde{u} - x^k\|^2}{\tilde{\sigma}^{x^k}}\\
&\overset{\rm (d)}\leq \frac{L_{\ell}\|A\|^2}{2\sigmamin}\|\widetilde{u} - x^k\|^2,
\end{aligned}
\end{equation}
where (a) and (b) hold thanks to $\ell^{x^k}(A\widetilde{u} - b) - \tilde{\sigma}^{x^k} > 0$ and \eqref{circstrictineq}, (c) holds because $\ell^{x^k}$ is smooth with $L_\ell$-Lipschitz gradient (see Lemma~\ref{ell+}(iii)), and (d) holds in view of Lemma~\ref{ell+}(iv) and the feasibility of $\widetilde u$ for \eqref{subproblem4}, and the fact that $\tilde{\sigma}^{x^k} \ge \sigmamin > 0$ (thanks to Lemma \ref{ell+}(i) and $x^k\in\mathcal{F}_1$). Hence,
\[
\begin{aligned}
\|\widetilde{x} - \widetilde{u}\|& = \widetilde{\tau}\|\widetilde{u} - \xfeasss\|\overset{\rm (a)}\leq \widetilde{\tau}M \overset{\rm (b)}\leq \frac{ML_{\ell}\|A\|^2}{2\sigmamin}\|\widetilde{u} - x^k\|^2,
\end{aligned}
\]
where $M := \sup\limits_{x\in C}\|x - \xfeasss\|$ and $\rm (a)$ holds because $\widetilde{u}\in C$, and $\rm(b)$ follows from \eqref{taubound}.

(iii): Note that $\widetilde{x} = (1 - \widetilde{\tau})\widetilde{u} + \widetilde{\tau} \xfeasss$ for some $\widetilde{\tau}\in [0,1]$ and thus
\[
\begin{aligned}
P(\widetilde{x}) &= P_1(\widetilde{x}) - P_2(\widetilde{x}) \overset{\rm (a)}\leq P_1(\widetilde{x}) - P_2(x^k) - \langle \xi^k, \widetilde{x} - x^k\rangle \\
&\overset{\rm (b)}\leq (1 - \widetilde{\tau})P_1(\widetilde{u}) + \widetilde{\tau}P_1(\xfeasss) - P_2(x^k) - (1 - \widetilde{\tau})\langle \xi^k, \widetilde{u} - x^k\rangle - \widetilde{\tau}\langle \xi^k, \xfeasss - x^k\rangle\\
&= P_1(\widetilde{u}) - \langle \xi^k, \widetilde{u} - x^k\rangle + \widetilde{\tau}\big(P_1(\xfeasss) - P_1(\widetilde{u}) - \langle \xi^k, \xfeasss - \widetilde{u}\rangle \big) - P_2(x^k)\\
&\overset{\rm (c)}\leq P_1(x^k) - \frac{1}{2\widetilde{\beta}}\|\widetilde{u} - x^k\|^2  - P_2(x^k) + \widetilde{\tau}\big(P_1(\xfeasss) - P_1(\widetilde{u}) - \langle \xi^k, \xfeasss - \widetilde{u}\rangle \big)\\
&\overset{\rm (d)}\leq P(x^k) - \frac{1}{2\widetilde{\beta}}\|\widetilde{u} - x^k\|^2 + \widetilde{\tau}M_1 \overset{\rm (e)}\leq P(x^k) - \left(\frac{1}{2\widetilde{\beta}} - \frac{M_1L_{\ell}\|A\|^2}{2\sigmamin}\right)\|\widetilde{u} - x^k\|^2,
\end{aligned}
\]
where (a) follows from the convexity of $P_2$, (b) holds because $P_1$ is convex, (c) follows from the facts that $\widetilde{u}$ is the minimizer of \eqref{subproblem4} and $x^k$ is feasible for \eqref{subproblem4} (since $x^k\in {\cal F}_1$), (d) holds with $M_1:= \sup\limits_{y,z\in C}\sup\limits_{\xi\in\partial P_2(z)}\{P_1(\xfeasss) - P_1(y) - \langle\xi, \xfeasss - y\rangle\}\in [0,\infty)$ (note that $M_1 < \infty$ thanks to \cite[Theorem~2.6]{Tu98} and the compactness of $C$, and $M_1\ge 0$ since $\xfeasss\in C$), (e) follows from item~(ii).

Therefore, as long as $\widetilde{\beta} < \frac{1}{2}\left(\frac{c}2 + \frac{M_1L_{\ell}\|A\|^2}{2\sigmamin}\right)^{-1}$ (independent of $k$), we have $P(\widetilde{x}) \leq P(x^k) - \frac{c}{2}\|\widetilde{u} - x^k\|^2$. In view of {Step 2b)} of Algorithm \ref{SQP_retract}, we conclude that the inner loop terminates finitely.

(iv): Since \eqref{subproblem4} is convex and the Slater condition holds by item (i), the desired conclusion follows from \cite[Corollary~28.2.1]{Ro70} and \cite[Theorem~28.3]{Ro70}.
\end{proof}

\begin{remark}[{{\bf Uniform lower bound on $\beta_k$}}]\label{gamma2}
From the proof of Theorem~\ref{welldef2}(iii), we see that for the $\beta_k$ in Algorithm~\ref{SQP_retract2}, it holds that
\[
\inf_k \beta_k \ge \widehat{\beta}_{\rm min}:= \textstyle\min\left\{\frac{\eta}{2}\left(\frac{c}2 + {\frac{M_1L_{\ell}\|A\|^2}{2\sigmamin}} \right)^{-1},\underline{\beta}\right\},
\]
where $\eta$, $c$ and $\underline{\beta}$ are given in Step 0 of Algorithm~\ref{SQP_retract2}, $\sigmamin$ and $L_\ell$ are defined as in Lemma~\ref{ell+}, and $M_1:= \sup\limits_{y,z\in C}\sup\limits_{\xi\in\partial P_2(z)}\{P_1(\xfeasss) - P_1(y) - \langle\xi, \xfeasss - y\rangle\}$.
\end{remark}

\begin{remark}[{{\bf Finding $\widetilde\tau$}}]
In Step 2a) of Algorithm~\ref{SQP_retract2}, when $\ell^{x^k}(A\widetilde u - b) > \tilde{\sigma}^{x^k}$, we need to find $\widetilde\tau$ so that $\ell^{x^k}(A((1-\widetilde\tau)\widetilde u + \widetilde\tau \xfeasss) - b) = \tilde{\sigma}^{x^k}$. We note that such a parameter $\widetilde\tau$ exists thanks to Theorem~\ref{welldef2}(ii), and admits a closed form formula since $\ell^{x^k}$ is a quadratic function.
\end{remark}

Before presenting the convergence result of Algorithm~\ref{SQP_retract2}, we show that the MFCQ holds for \eqref{P1}, under Assumptions~\ref{gassum} and \ref{assumption4}.
\begin{proposition}\label{MFCQ}
Consider \eqref{P1} and suppose that Assumptions~\ref{gassum} and \ref{assumption4} hold. Then the MFCQ for \eqref{P1} holds at any $y\in \mathcal{F}_1$, i.e., for any $y\in \mathcal{F}_1$, the following implication holds:
\[
\left.\begin{matrix}
-\lambda A^T\nabla\ell(Ay - b)\in \mathcal{N}_C(y)\\
\lambda(\ell(Ay - b) -\sigma) =0\\
\lambda\geq 0
\end{matrix}\right\} \Rightarrow\lambda =0.
\]
\end{proposition}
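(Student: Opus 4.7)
The plan is to split on whether the inequality constraint is active at $y$. If $\ell(Ay-b) < \sigma$, the complementarity condition $\lambda(\ell(Ay-b)-\sigma) = 0$ immediately forces $\lambda = 0$, and we are done. So the substantive case is when $\ell(Ay-b) = \sigma$, and the goal is to derive a contradiction from assuming $\lambda > 0$.

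In the active case, the strategy is a standard Slater-type argument, but with the nonconvex $\ell$ replaced by the convex surrogate $\ell^{y}$ from \eqref{defell}. From $-\lambda A^T\nabla\ell(Ay-b) \in \mathcal{N}_C(y)$ and $\xfeasss \in C$, the defining inequality of the normal cone gives
\[
\lambda\,\langle \nabla\ell(Ay-b),\, A\xfeasss - Ay\rangle \;\geq\; 0.
\]
By Lemma~\ref{ell+}(iv), $\nabla\ell(Ay-b) = \nabla\ell^{y}(Ay-b)$, so I can rewrite this with $\ell^{y}$ in place of $\ell$ and then invoke convexity of $\ell^{y}$ (Lemma~\ref{ell+}(ii)).

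Applying the subgradient inequality for the convex smooth function $\ell^{y}$ at the points $Ay-b$ and $A\xfeasss - b = 0$ (using $A\xfeasss = b$ from Assumption~\ref{assumption4}, together with $\ell^{y}(0)=0$ from Lemma~\ref{ell+}(ii)) yields
\[
\langle \nabla\ell^{y}(Ay-b),\, A\xfeasss - Ay\rangle \;\leq\; \ell^{y}(0) - \ell^{y}(Ay-b) \;=\; -\ell^{y}(Ay-b).
\]
Next, since $\ell(Ay-b) = \sigma$, Lemma~\ref{ell+}(iv) gives $\ell^{y}(Ay-b) = \tilde{\sigma}^{y}$, and Lemma~\ref{ell+}(i) gives $\tilde{\sigma}^{y} \geq \sigmamin > 0$ (using $y \in \mathcal{F}_1$). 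Chaining the two displayed inequalities yields
\[
0 \;\leq\; \lambda\,\langle \nabla\ell^{y}(Ay-b),\, A\xfeasss - Ay\rangle \;\leq\; -\lambda\,\tilde{\sigma}^{y} \;\leq\; 0,
\]
which, combined with $\lambda \geq 0$ and $\tilde{\sigma}^{y} > 0$, forces $\lambda = 0$.

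The only conceptual hurdle is noticing that although the constraint $\ell(A\cdot - b) \leq \sigma$ is nonconvex, at the linearization point $y$ the gradient of $\ell$ and of the convex majorant $\ell^{y}$ coincide, so the MFCQ condition (which only sees the gradient at $y$) can be analyzed via the convex function $\ell^{y}$. Once this observation is made, the argument reduces to a direct Slater-type computation using the explicit interior-type point $\xfeasss$ with $A\xfeasss = b$ supplied by Assumption~\ref{assumption4}; no finer analysis is required.
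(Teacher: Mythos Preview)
Your proof is correct and follows essentially the same approach as the paper: both arguments use Lemma~\ref{ell+}(iv) to replace $\nabla\ell(Ay-b)$ by $\nabla\ell^{y}(Ay-b)$, then combine the normal-cone inequality at $\xfeasss$ with the convexity of $\ell^{y}$, $A\xfeasss=b$, and $\tilde\sigma^{y}>0$ from Lemma~\ref{ell+}(i)--(ii). The only cosmetic difference is that the paper packages the argument as a proof by contradiction (assuming $\lambda>0$ and deriving a strict inequality that violates the normal-cone condition), whereas you split off the inactive case first and conclude $\lambda=0$ directly by a squeeze.
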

\begin{proof}
Suppose that $-\lambda A^T\nabla\ell(Ay - b)\in \mathcal{N}_C(y)$ and $\lambda(\ell(Ay - b) -\sigma) =0$ but $\lambda > 0$. We will derive a contradiction.

To this end, recall from Lemma~\ref{ell+}(iv) that $\ell(Ay - b) - \sigma = \ell^y(Ay - b) - \tilde{\sigma}^y$ and $\nabla\ell(Ay - b) = \nabla\ell^y(Ay - b)$. Thus, we have
 \begin{equation}\label{hahahahaha}
 -\lambda A^T\nabla\ell^y(Ay - b)\in \mathcal{N}_C(y)\  \ \text{ and }\  \ \lambda(\ell^y(Ay - b) - \tilde{\sigma}^y) =0.
 \end{equation}
Using the second relation above, the convexity of $\ell^y$ (see Lemma~\ref{ell+}(ii)) and the assumption that $\lambda > 0$, we have
 \[
 \begin{aligned}
\langle\lambda A^T\nabla\ell^y(Ay - b), \xfeasss - y\rangle &\leq \lambda\ell^y(A\xfeasss - b) - \lambda\ell^y(Ay - b)  = \lambda\ell^y(A\xfeasss - b)-  \lambda\tilde{\sigma}^y = -  \lambda\tilde{\sigma}^y < 0,
 \end{aligned}
 \]
where the second equality follows from Assumption~\ref{assumption4} and Lemma~\ref{ell+}(ii), and the strict inequality holds in view of $\lambda > 0$, $y\in {\cal F}_1$ and Lemma~\ref{ell+}(i).
On the other hand, the first relation in \eqref{hahahahaha} shows that $\langle-\lambda A^T\nabla\ell^y(Ay - b), x - y\rangle\leq 0$ for any $x\in C$, which contradicts the above display because $\xfeasss\in C$. Thus, we must have $\lambda = 0$. This completes the proof.
\end{proof}
We now present the main convergence result of Algorithm~\ref{SQP_retract2}. The proof is almost identical to that of Theorem~\ref{convergence2}, except that Theorem~\ref{welldef2}(iv) is used in place of Remark~\ref{KKT}, and Remark~\ref{gamma2} is used in place of Remark~\ref{gamma}. We omit its proof for brevity.
\begin{theorem}[{{\bf Subsequential convergence of Algorithm~\ref{SQP_retract2}}}]\label{convergence3}
Consider \eqref{P1} and suppose that Assumptions~\ref{gassum} and \ref{assumption4} hold. Let {\color{black}$\{(x^k,u^k)\}$} be the sequence generated by Algorithm \ref{SQP_retract2} and $\lambda_k$ be a Lagrange multiplier of \eqref{subproblem4} with $\widetilde{\beta} = \beta_k$.  Then the following statements hold:
\begin{enumerate}[{\rm(i)}]
  \item It holds that $\lim\limits_{k\rightarrow\infty}\|u^k - x^k\| = 0$.
  \item The sequences $\{x_k\}$ and $\{\lambda_k\}$ are bounded.
  \item Any accumulation point of $\{x^k\}$ is a {\color{black}critical point} of \eqref{P1}.
\end{enumerate}
\end{theorem}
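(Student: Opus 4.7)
The plan is to mirror the proof of Theorem~\ref{convergence2} essentially line for line, substituting each convex-case ingredient by its \eqref{P1}-analogue developed in Section~\ref{sec4}. Specifically, I would invoke Theorem~\ref{welldef2}(iv) in place of Remark~\ref{KKT}, use the uniform lower bound $\widehat{\beta}_{\rm min} > 0$ from Remark~\ref{gamma2} in place of the one in Remark~\ref{gamma}, and appeal to Proposition~\ref{MFCQ} in place of the MFCQ that was available in the convex setting under Assumption~\ref{assumption1}.

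For item~(i), I would telescope the descent inequality $P(x^{k+1}) \le P(x^k) - (c/2)\|u^k - x^k\|^2$ supplied by Step~2b). Since $\{x^k\}\subseteq C$ is kept feasible by Step~2a) together with Lemma~\ref{ell+}(v), and $P$ is continuous on the compact set $C$, the summability $\sum_k \|u^k - x^k\|^2 < \infty$ follows at once, and hence $\|u^k - x^k\| \to 0$.

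For item~(ii), the boundedness of $\{x^k\}$ is immediate since $\{x^k\}\subseteq C$. For $\{\lambda_k\}$, I would argue by contradiction. Suppose $\lambda^{k_j}\to\infty$ along a subsequence (possible since $\lambda^{k_j}\ge 0$ is scalar), and pass to a further subsequence so that $x^{k_j}\to x^*\in C$; by item~(i), $u^{k_j}\to x^*$ as well. Taking the two relations in Theorem~\ref{welldef2}(iv), dividing both by $\lambda^{k_j}$, and letting $j\to\infty$ would yield in the limit
\[
-A^T\nabla\ell(Ax^* - b)\in \mathcal{N}_C(x^*), \qquad \ell(Ax^* - b) = \sigma,
\]
where I rely on (a) item~(i) together with the boundedness of $\{1/\beta_k\}$ (Remark~\ref{gamma2}) and the boundedness of $\nabla\ell$ on the compact set $AC - b$ to discard the $\beta_{k_j}^{-1}(u^{k_j}-x^{k_j})$ and $\langle A^T\nabla\ell(Ax^{k_j}-b),\,u^{k_j}-x^{k_j}\rangle$ terms, (b) local boundedness of $\partial P_1$ and $\partial P_2$ on $\R^n$ to dispose of $\partial P_1(u^{k_j})/\lambda^{k_j}$ and $\xi^{k_j}/\lambda^{k_j}$, and (c) outer semicontinuity of $\mathcal{N}_C$. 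The display above contradicts Proposition~\ref{MFCQ} at $y = x^*$ with multiplier $\lambda = 1$. For item~(iii), given an accumulation point $\bar x$ of $\{x^k\}$ with $x^{k_j}\to\bar x$, item~(ii) lets me extract a further subsequence with $\lambda^{k_j}\to\bar\lambda\ge 0$, and item~(i) gives $u^{k_j}\to\bar x$. Passing to the limit in Theorem~\ref{welldef2}(iv) using closedness of $\partial P_1$, $\partial P_2$ and $\mathcal{N}_C$, uniform boundedness of $\{\partial P_1(u^{k_j})\}$ and $\{\partial P_2(x^{k_j})\}$, continuity of $\nabla\ell$ and of $A$, and boundedness of $\{1/\beta_k\}$, I would obtain
\[
0 \in \partial P_1(\bar x) - \partial P_2(\bar x) + \bar\lambda A^T\nabla\ell(A\bar x - b) + \mathcal{N}_C(\bar x),
\]
together with $\bar\lambda(\ell(A\bar x - b) - \sigma) = 0$. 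Closedness of $C$ and continuity of $\ell$ then give $\bar x\in C$ and $\ell(A\bar x - b)\le \sigma$, making $\bar x$ a critical point of \eqref{P1}.

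The one delicate point is that the complementarity identity in Theorem~\ref{welldef2}(iv) carries the extra term $\langle A^T\nabla\ell(Ax^{k_j}-b),\,u^{k_j}-x^{k_j}\rangle$, which does not appear in Definition~\ref{Stationary}; its vanishing in the limit hinges on item~(i), so the order (i)$\to$(ii)$\to$(iii) is essential. Beyond this, everything is a faithful transcription of the proof of Theorem~\ref{convergence2}.
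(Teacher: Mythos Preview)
Your proposal is correct and matches the paper's approach exactly: the paper itself omits the proof, stating only that it is ``almost identical to that of Theorem~\ref{convergence2}, except that Theorem~\ref{welldef2}(iv) is used in place of Remark~\ref{KKT}, and Remark~\ref{gamma2} is used in place of Remark~\ref{gamma}.'' You have carried out precisely these substitutions, and you also correctly identified the additional (implicit) substitution of Proposition~\ref{MFCQ} for the MFCQ that follows from Assumption~\ref{assumption1} in the convex case.
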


\section{Explicit Kurdyka-{\L}ojasiewicz exponent for some structured convex optimization models}\label{sec5}
In this section, we examine the KL exponent for some structured convex optimization model that arises from applications. Specifically, we consider the following constrained optimization problem:
\begin{equation}\label{KLproblem}
\min\limits_{x\in \R^n}F(x):= h(x) + \delta_{G(\cdot) \leq 0}(x),
\end{equation}
where $h$ is proper, closed and convex, the function $G(x) = (q_1(A_1x), \ldots, q_m(A_m(x)))$ is continuous with each $q_i: \R^{s_i}\to \R$ being strictly convex and each $A_i\in\R^{s_i\times n}$, and $\{x\in {\rm dom}\,h: G(x)\le 0\}\not=\emptyset$. One can see that, when $q_i$'s are additionally Lipschitz differentiable and $h = P_1 + \delta_C$ for some continuous convex function $P_1$ and nonempty compact convex set $C$, problem \eqref{KLproblem} is a special case of \eqref{P0} with $P_2 = 0$ and $g_i = q_i\circ A_i$ for $i = 1, \ldots, m$.

The proof of the following theorem is similar to that of \cite[Theorem~4.2]{YuLP20}. We omit its proof for brevity.
\begin{theorem}\label{KL}
Let $F$ be as in \eqref{KLproblem} and $\bar{x}\in\Argmin F$. Suppose the following conditions hold:
\begin{enumerate}[{\rm (i)}]
  \item There exists a Lagrange multiplier $\bar{\lambda}\in\R^m_+$ for \eqref{KLproblem} and $x\mapsto h(x) + \langle\bar{\lambda}, G(x)\rangle$ is a KL function with exponent $\alpha\in(0,1)$.
  \item The strict complementarity condition holds at $(\bar{x}, \bar{\lambda})$, i.e., for any $i$ satisfying $\bar{\lambda}_i = 0$, it holds that $q_i(A_i\bar{x}) < 0$.
 \end{enumerate}
 Then $F$ satisfies the KL property with exponent $\alpha$ at $\bar{x}$.
\end{theorem}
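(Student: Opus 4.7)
The plan is to adapt the strategy of the proof of \cite[Theorem~4.2]{YuLP20}. Define the Lagrangian $\cL(x) := h(x) + \langle \bar\lambda, G(x)\rangle$; this function is convex (since $h$ is convex and each $q_i \circ A_i$ is convex with $\bar\lambda_i \ge 0$), complementary slackness $\bar\lambda_i q_i(A_i \bar x) = 0$ gives $\cL(\bar x) = h(\bar x) = F(\bar x)$, and $0 \in \partial \cL(\bar x)$ follows from the fact that $\bar\lambda$ is a Lagrange multiplier for \eqref{KLproblem}, so $\bar x$ is a minimizer of $\cL$. Assumption (i) therefore provides constants $c_0,\epsilon_0 > 0$ and a neighborhood $V_0$ of $\bar x$ such that
\[
\d(0,\partial \cL(x)) \ge c_0 (\cL(x) - \cL(\bar x))^\alpha \quad \text{whenever } x \in V_0 \text{ and } 0 < \cL(x) - \cL(\bar x) < \epsilon_0.
\]
The task is then to transport this KL inequality to $F$.

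My first step would be to localize via strict complementarity. Let $I(\bar x) := \{i : q_i(A_i \bar x) = 0\}$. Assumption (ii) together with complementary slackness forces $\bar\lambda_i > 0$ exactly for $i \in I(\bar x)$, while $\bar\lambda_i = 0$ and $q_i(A_i \bar x) < 0$ otherwise. Shrinking $V_0$ so that $q_i(A_i x) < 0$ on $V_0$ for every $i \notin I(\bar x)$, these indices drop out of $\partial \cL(x)$ (as $\bar\lambda_i = 0$) and out of any normal-cone representation of $\partial F(x)$, reducing the analysis to the non-degenerate case where every remaining constraint is active at $\bar x$ with strictly positive multiplier.

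Next I would compare $F$ and $\cL$ through two local estimates. For feasible $x \in V_0$, the identity
\[
\cL(x) - \cL(\bar x) = (F(x) - F(\bar x)) + \sum_{i \in I(\bar x)} \bar\lambda_i q_i(A_i x)
\]
gives $\cL(x) - \cL(\bar x) \le F(x) - F(\bar x)$, since the remaining sum is nonpositive. For the subgradients, given $v \in \partial F(x)$, one can (via the Slater-based normal-cone calculus, valid because $\bar\lambda$ exists) write $v = \eta + \sum_{i \in I(x)} \mu_i A_i^T \zeta_i$ with $\eta \in \partial h(x)$, $\zeta_i \in \partial q_i(A_i x)$ and $\mu_i \ge 0$; the element $w := \eta + \sum_{i \in I(\bar x)} \bar\lambda_i A_i^T \zeta_i$ then lies in $\partial \cL(x)$ after choosing some $\zeta_i \in \partial q_i(A_i x)$ for $i \in I(\bar x) \setminus I(x)$. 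Invoking the KKT identity $0 = \bar\eta + \sum_{i \in I(\bar x)} \bar\lambda_i A_i^T \bar\zeta_i$ at $\bar x$ together with continuity of the data, I would bound $\|w - v\|$ by a residual vanishing as $x \to \bar x$; combined with the KL inequality for $\cL$, this yields $\d(0, \partial F(x)) \ge c' (F(x) - F(\bar x))^\alpha$ on a smaller neighborhood, as desired.

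I expect the main obstacle to be the subgradient comparison for indices $i \in I(\bar x) \setminus I(x)$, where $\bar\lambda_i > 0$ contributes a nonzero term to $w$ but no corresponding term appears in $v$ (the constraint is inactive at $x$). Controlling this mismatch requires combining the KKT identity at $\bar x$ with the strict convexity of $q_i$ and continuity of $\partial q_i$ to ensure that the residual is of lower order than $(F(x) - F(\bar x))^\alpha$ on the relevant set; this is the technical heart of the argument and parallels the calculation in \cite[Theorem~4.2]{YuLP20}, which I would import and adapt to the present setting.
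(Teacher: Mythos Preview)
Your proposal is aligned with the paper's own treatment: the paper does not give an independent proof of this theorem but simply states that the argument is ``similar to that of \cite[Theorem~4.2]{YuLP20}'' and omits the details. Your plan to adapt exactly that argument---set up the Lagrangian $\cL$, use strict complementarity to localize to the active indices, and transfer the KL inequality from $\cL$ to $F$ by comparing function values and subgradients---is precisely the route the paper points to, so there is nothing to contrast at the level of strategy.

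One remark on the technical obstacle you flag. The mismatch for indices $i\in I(\bar x)\setminus I(x)$ is typically handled not by bounding a residual of lower order, but by showing that on the relevant neighborhood (feasible $x$ near $\bar x$ with $F(x)$ close to $F(\bar x)$) the active set actually coincides with $I(\bar x)$, so that $I(\bar x)\setminus I(x)=\emptyset$. The strict convexity of $q_i$ together with $\bar\lambda_i>0$ for $i\in I(\bar x)$ is what drives this: if $q_i(A_ix)<0$ for some $i\in I(\bar x)$, the strictly positive multiplier creates a first-order gap between $\cL(x)$ and $F(x)$ that is incompatible with both being close to the common minimum. This is the mechanism in \cite[Theorem~4.2]{YuLP20}, and framing it this way should make the subgradient comparison go through cleanly rather than as a delicate residual estimate.
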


The next corollary deals with \eqref{KLproblem} with $m = 1$. Its proof follows a similar line of arguments as the proof of \cite[Corollary~4.3]{YuLP20}. We omit the proof for brevity.

\begin{corollary}\label{KL1}
Let $F$ be as in \eqref{KLproblem} with $m = 1$. Suppose the following conditions hold:
\begin{enumerate}[{\rm (i)}]
   \item $\inf\limits_{x\in \R^n}h(x) < \inf\limits_{x\in \R^n} \{h(x):\; q_1(A_1x)\leq 0\}$.
   \item There exists a Lagrange multiplier $\bar{\lambda}\in\R_+$ for \eqref{KLproblem} and $x\mapsto h(x) + \bar{\lambda}q_1(A_1x)$ is a KL function with exponent $\alpha\in(0,1)$.
 \end{enumerate}
 Then $F$ is KL function with exponent $\alpha$.
\end{corollary}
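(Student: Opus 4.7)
The plan is to reduce Corollary~\ref{KL1} to Theorem~\ref{KL}. Because there is only a single constraint ($m=1$), the strict complementarity condition in Theorem~\ref{KL} becomes vacuous as soon as we know that the Lagrange multiplier $\bar\lambda$ is strictly positive; assumption~(i) of Corollary~\ref{KL1} is precisely what will give this.

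First I would pick any $\bar x\in\Argmin F$; such a minimizer exists because $h$ is proper closed convex and the feasible set $\{x:q_1(A_1x)\le 0\}$ is closed and convex (strict convexity of $q_1$ is not needed here). By assumption~(ii) and the convex subdifferential chain rule applied to $q_1\circ A_1$, the KKT conditions hold at $\bar x$ with some $\bar\lambda\ge 0$:
\[
0\in \partial h(\bar x)+\bar\lambda A_1^T\partial q_1(A_1\bar x),\quad \bar\lambda\,q_1(A_1\bar x)=0,\quad q_1(A_1\bar x)\le 0.
\]
Suppose for a contradiction that $\bar\lambda=0$. Then $0\in\partial h(\bar x)$, so by convexity $\bar x$ is an unconstrained minimizer of $h$. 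Since $\bar x$ is feasible for \eqref{KLproblem}, this would yield $\inf_{x} h(x)=h(\bar x)\ge \inf\{h(x):q_1(A_1x)\le 0\}$, directly contradicting assumption~(i) of Corollary~\ref{KL1}. Hence $\bar\lambda>0$, and complementary slackness then forces $q_1(A_1\bar x)=0$.

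With $\bar\lambda>0$ and $m=1$, condition~(ii) of Theorem~\ref{KL} holds vacuously (there is no index $i$ with $\bar\lambda_i=0$). Applying Theorem~\ref{KL} with the multiplier $\bar\lambda$ given by assumption~(ii) of Corollary~\ref{KL1} shows that $F$ satisfies the KL property with exponent $\alpha$ at this $\bar x$. Since $\bar x\in\Argmin F$ was arbitrary, $F$ has the KL property with exponent $\alpha$ at every minimizer.

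It remains to upgrade "KL at every minimizer" to "$F$ is a KL function with exponent $\alpha$". Here I would invoke the standard convex-analytic argument: since $F$ is convex, at any $\tilde x\in\dom\partial F$ with $F(\tilde x)>\min F$, the subgradient inequality tested against a fixed minimizer $x^*$ gives $\|v\|\ge (F(x)-F(x^*))/\|x-x^*\|$ for every $v\in\partial F(x)$ and every $x$ in a small bounded neighborhood of $\tilde x$ with $F(x)>F(\tilde x)$, so $\d(0,\partial F(x))$ is bounded below by a positive constant there. A desingularizing function $\phi(s)=a_0 s^{1-\alpha}$ with $a_0$ large enough then verifies \eqref{KLdefin} at $\tilde x$, exactly as in the proof of \cite[Corollary~4.3]{YuLP20}. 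The main obstacle is Step~2, i.e.\ ruling out $\bar\lambda=0$, but this is immediate once assumption~(i) is used to separate the unconstrained and constrained infima; the remaining reduction to Theorem~\ref{KL} and the passage from minimizers to all of $\dom\partial F$ are routine.
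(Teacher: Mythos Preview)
Your proposal is correct and follows essentially the same route the paper intends (the paper omits the proof, citing \cite[Corollary~4.3]{YuLP20}): use assumption~(i) to force $\bar\lambda>0$, so that strict complementarity in Theorem~\ref{KL} becomes vacuous when $m=1$, and then extend from minimizers to all of $\dom\partial F$ via the standard convexity argument. One small quibble: closedness and convexity of $h$ and the feasible set alone do not guarantee $\Argmin F\neq\emptyset$; in the paper's context this is implicit in the existence of a Lagrange multiplier in assumption~(ii), so the issue is only with your stated justification, not the argument itself.
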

Theorem~\ref{KL} and Corollary~\ref{KL1} give sufficient conditions for us to derive the KL exponent of $F$ in \eqref{KLproblem} from that of its Lagrangian. In the following subsections, we will make use of Corollary~\ref{KL1} to derive the KL exponent of \eqref{KLproblem} with $m = 1$ and specific choices of $h$. We will need the following assumption, which is a commonly used sufficient condition for the existence of Lagrange multipliers.
\begin{assumption}\label{assumptionxxx}
  For \eqref{KLproblem} with $m = 1$, there exists $\xfeas\in \dom h$ with $q_1(A_1\xfeas) < 0$.
\end{assumption}

\subsection{Case 1: $h$ is the sum of the $\ell_1$ norm and the indicator function of a polyhedron containing the origin}
Suppose that $h = \|\cdot\|_1 + \delta_C$ in \eqref{KLproblem}, where $C$ is polyhedron containing the origin. We deduce from \cite[Corollary~5.1]{LiPong18} and Corollary~\ref{KL} that the KL exponent of the corresponding $F$ in \eqref{KLproblem} is $\frac{1}{2}$ if $m = 1$, $q_1:\mathbb{R}^{s_1} \rightarrow \mathbb{R}$ takes one of the following forms with $b\in \R^{s_1}$ and $\sigma > 0$ chosen so that the origin is not feasible and that Assumption~\ref{assumptionxxx} holds:
\begin{enumerate}[{\rm (i)}]
   \item (Basis pursuit denoising \cite{Ca18}) $q_1(z) = \frac{1}{2}\|z - b\|^2 - \sigma$.
   \item (Logistic loss \cite{HoLS13}) $q_1(z) = \sum_{i=1}^{s_1}\log(1 + \exp(b_iz_i)) - \sigma$ for some $b\in \{-1,1\}^{s_1}$.
   \item (Poisson loss \cite{zo04}) $q_1(z) = \sum_{i=1}^{s_1}(-b_iz_i + \exp(z_i)) - \sigma$.
 \end{enumerate}

\subsection{Case 2: $h$ is the sum of a closed gauge and the Euclidean norm}
When $h = \|\cdot\| + \kappa$, where $\kappa$ is a closed gauge, i.e., a nonnegative, positively homogeneous closed convex function that vanishes at the origin, the corresponding problem \eqref{KLproblem} with $m = 1$ and a suitable choice of $q_1$ was studied in \cite[Section~6.1]{FrMP19} as an approach to ``regularize" an underlying gauge optimization problem. In what follows, we consider the following assumption on $q_1$ in \eqref{KLproblem} with $m = 1$.
\begin{assumption}\label{EllStrCon}
For \eqref{KLproblem} with $m = 1$, the function $q_1$ is strongly convex on any compact convex set and is twice continuously differentiable. Moreover, $\inf q_1 > -\infty$.
\end{assumption}
Conditions on $q_1$ similar to those in Assumption~\ref{EllStrCon} can be found in \cite{ZhSo17,LiPong18} and are standard assumptions when it comes to studying the KL property of functions of the form $\lambda \, q_1(A_1x) + h(x)$, where $\lambda > 0$; these are Lagrangian functions of the $F$ in \eqref{KLproblem} with $m = 1$. In the next theorem, we study the KL property of \eqref{KLproblem} with $m = 1$ under Assumptions~\ref{assumptionxxx} and~\ref{EllStrCon}. Our analysis makes use of Corollary~\ref{KL1}, which means that we will first study the KL property of some suitable Lagrangian functions. The proof of item (ii) below follows closely the arguments in \cite[Theorem~4.4]{YuLiPo19} concerning the so-called $C^2$-cone-reducible sets: Indeed, our argument is based on representing a certain set as the level set of a continuously differentiable function with Lipschitz continuous gradient.

\begin{theorem}\label{KLnorm}
Let $h = \|\cdot\| + \kappa$ for some closed gauge $\kappa$. Then the following statements hold:
\begin{enumerate}[{\rm (i)}]
   \item For any $\zeta\in \R^n$, the set $(\partial h)^{-1}(\zeta)$ is a polyhedron.
   \item The mapping $x\mapsto \partial h(x)$ is metrically subregular at $x\in\dom \partial h$ for any $\zeta\in \partial h(x)$.
   \item Consider \eqref{KLproblem} with $m = 1$ and $h$ as above. If Assumptions~\ref{assumptionxxx} and \ref{EllStrCon} hold and the origin is not feasible, then the function $F$ in \eqref{KLproblem} is a KL function with an exponent of $\frac{1}{2}$.
 \end{enumerate}
\end{theorem}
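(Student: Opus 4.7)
My plan is to prove the three items in order, pivoting on a convex-conjugate description of $\partial h$. Since $h$ is the sum of two closed gauges, it is itself a closed gauge, and its Fenchel conjugate is $h^* = \delta_{\mathcal{H}}$, where $\mathcal{H} := \partial h(0)$. By the convex sum rule, $\mathcal{H} = B(0,1) + \mathcal{K}$ with $\mathcal{K} := \partial \kappa(0)$ being the polar of $\kappa$; equivalently, $\mathcal{H} = \{v \in \R^n :\, \d(v,\mathcal{K}) \le 1\}$. Throughout, $\pi_{\mathcal{K}}$ denotes the (well-defined) Euclidean projection onto $\mathcal{K}$.

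For (i), I would use $(\partial h)^{-1}(\zeta) = \partial h^*(\zeta) = \mathcal{N}_{\mathcal{H}}(\zeta)$ when $\zeta \in \mathcal{H}$ (and $\emptyset$ otherwise). The decisive observation is that $\psi(v) := \tfrac12 \d(v,\mathcal{K})^2$ is $C^{1,1}$ on $\R^n$ with $\nabla \psi(v) = v - \pi_{\mathcal{K}}(v)$ (which is $1$-Lipschitz, since $I - \pi_{\mathcal{K}}$ is firmly nonexpansive), so $\mathcal{H} = \{v :\, \psi(v) \le 1/2\}$ is a smoothly defined sublevel set with $\nabla \psi(\zeta) \neq 0$ at every boundary point. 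A direct computation using the Robinson-type CQ then gives $\mathcal{N}_{\mathcal{H}}(\zeta) = \R_+ \bigl(\zeta - \pi_{\mathcal{K}}(\zeta)\bigr)$ at any boundary point $\zeta$ and $\mathcal{N}_{\mathcal{H}}(\zeta) = \{0\}$ at any interior point; in either case the set is a polyhedron.

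For (ii), I would follow the proof template of \cite[Theorem~4.4]{YuLiPo19} on $C^2$-cone-reducible sets. The $C^{1,1}$ representation $\mathcal{H} = \{v :\, \psi(v) \le 1/2\}$ together with the condition $\nabla \psi(\zeta) \neq 0$ on the boundary of $\mathcal{H}$ supplies exactly the ingredients their argument requires. Inverting the relation $\partial h = (\mathcal{N}_{\mathcal{H}})^{-1}$, i.e., $\zeta \in \partial h(x) \Leftrightarrow x \in \mathcal{N}_{\mathcal{H}}(\zeta)$, then delivers the desired metric subregularity of $\partial h$ at any $(x,\zeta)$ with $\zeta \in \partial h(x)$. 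The step I expect to be most delicate is handling the potentially singular base point $x = 0$ and obtaining the estimate uniformly in a neighborhood, while also distinguishing the cases where $\zeta$ lies in the interior versus on the boundary of $\mathcal{H}$.

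For (iii), I would invoke Corollary~\ref{KL1}. The Slater condition from Assumption~\ref{assumptionxxx} together with convexity of the problem produces a Lagrange multiplier $\bar\lambda \ge 0$. Since $h(x) \ge \|x\|$ and the origin lies outside the closed feasible set, one gets $\inf_{q_1(A_1\cdot)\le 0} h \ge \d(0,\{x:\, q_1(A_1 x) \le 0\}) > 0 = \inf h$, which verifies condition (i) of Corollary~\ref{KL1}. The remaining task is to show that the Lagrangian $x \mapsto h(x) + \bar\lambda\, q_1(A_1 x)$ is a KL function with exponent $\tfrac12$: this would follow by combining the metric subregularity of $\partial h$ from~(ii) with the $C^2$-smoothness and local strong convexity of $q_1$ (Assumption~\ref{EllStrCon}) via the composite arguments in \cite{LiPong18,ZYuP20}. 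The claim for $F$ is then immediate from Corollary~\ref{KL1}.
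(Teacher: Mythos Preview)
Your plan matches the paper's proof almost exactly: the same representation $\mathcal{H}=B(0,1)+\mathcal{K}$ (the paper writes $B+D$), the same $C^{1,1}$ level-set description via the squared distance to $\mathcal{K}$, the same explicit normal-cone formula for (i), and the same calmness-of-the-inverse argument (in the spirit of \cite[Theorem~4.4]{YuLiPo19}) for (ii).

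For (iii) you have the right skeleton---Corollary~\ref{KL1} plus KL exponent $\tfrac12$ for the Lagrangian---but your sketch omits one ingredient the paper makes essential use of. The paper does not get KL~$\tfrac12$ for $h+\bar\lambda\,q_1\circ A_1$ from metric subregularity alone; it runs the \cite{ZhSo17} error-bound machinery, which requires \emph{two} inputs: metric subregularity of $\partial h$ at $(\bar x,-\bar\zeta)$ (your item~(ii)) \emph{and} bounded linear regularity of the pair $\{\Gamma_f(A_1\bar x),\Gamma_h(\bar\zeta)\}$. The latter is where item~(i) is actually used: polyhedrality of $(\partial h)^{-1}(-\bar\zeta)$ combined with \cite[Corollary~3]{BaBL99} gives the bounded linear regularity, then \cite[Theorem~2, Corollary~1]{ZhSo17} yields the Luo--Tseng error bound, and finally \cite[Theorem~4.1]{LiPong18} converts this to KL~$\tfrac12$. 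Your phrase ``composite arguments in \cite{LiPong18,ZYuP20}'' does not supply this chain (and \cite{ZYuP20} is not the right hook here); without invoking item~(i) for bounded linear regularity, the step from metric subregularity to KL~$\tfrac12$ of the Lagrangian is not justified. Also note (as the paper does in a footnote) that condition~(i) of Corollary~\ref{KL1} forces $\bar\lambda>0$, which you need so that the strong convexity of $q_1$ actually contributes.
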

\begin{proof}
(i): Since $\kappa$ is a closed gauge, from \cite[Proposition~2.1(iii)]{FrMP14}, we see that $\kappa(x) = \sigma_{D}(x)$, where $D := \{x\in\R^n:\;\kappa^\circ(x) \leq 1\}$ is a convex closed set, and $\kappa^\circ$ is the polar of the gauge $\kappa$. Combining this with $\|x\| = \sigma_{B}(x)$, where $B:= B(0,1)$, we have that
\begin{equation}\label{inver}
(\partial h)^{-1} =(\partial(\|\cdot\| + \kappa))^{-1} = (\partial\sigma_{B + D})^{-1} \overset{\rm (a)}= \mathcal{N}_{B + D},
\end{equation}
where (a) follows from \cite[Eq.~11(4)]{RoWe98}.

Define $\Theta: \R^{n}\to \R$ by $\Theta(v) := \d^2(v,D) - 1$. Then we have $\nabla\Theta(v) = 2(v - P_D(v))$, where $P_D(v)$ denotes the projection of $v$ onto $D$, and
\[
B + D = \left\{v :\; \d^2(v,D) - 1 \leq 0\right\} = \{v :\; \Theta(v) \le 0 \}.
\]
Moreover, $\Theta(v) = 0 \Leftrightarrow \|v - P_D(v)\| = 1$. In addition, from \cite[Section~D:~Theorem~1.3.5]{HirLem01}, we have for any $v\in \R^{n}$ that
\begin{equation}\label{Nor}
\mathcal{N}_{B + D}(v) = \begin{cases}
  \{0\} & {\rm if}\ \Theta(v) < 0,\\
  \bigcup_{t \ge 0}t(v - P_D(v)) & {\rm if}\ \Theta(v) = 0,\\
  \emptyset & {\rm otherwise}.
\end{cases}
\end{equation}
Using \eqref{Nor} together with \eqref{inver}, we conclude that $(\partial h)^{-1}(\zeta)$ is a polyhedron for any $\zeta\in\R^n$, which completes the proof of (i).

(ii): Fix any $x\in\dom \partial h$ and any $\zeta\in \partial h(x)$. In order to prove that the mapping $x\mapsto \partial h(x)$ is metrically subregular at $x$ for $\zeta$, by \cite[Theorem~3H.3]{DonRo09}, we only need to prove that the set-valued mapping $(\partial h)^{-1}$ is calm at $\zeta$ with respect to $x$. By \eqref{inver}, it suffices to show that the mapping $v\mapsto \mathcal{N}_{B + D}(v)$ is calm at $\zeta$ with respect to $x$, i.e., there exist $\delta,\rho,\eta > 0$, such that
whenever $v\in B(\zeta,\rho)$,
\begin{equation}\label{calm}
\mathcal{N}_{B + D}(v)\cap B(x, \delta)\subseteq \mathcal{N}_{B + D}(\zeta) + \eta\|v - \zeta\|B(0,1).
\end{equation}

To this end, we note first from \eqref{inver} and \eqref{Nor} that $x\in{\cal N}_{B+D}(\zeta)$, which implies in particular that $\zeta \in B+D$. Hence, $\Theta(\zeta)\leq 0$.
Now, if $\Theta(\zeta) < 0$, in view of \eqref{Nor}, we have $\mathcal{N}_{B + D}(\zeta) = \{0\}$. In addition, by the continuity of $\Theta$, there exists $\rho_1 > 0$ such that $\Theta(v) < 0$ whenever $v\in B(\zeta, \rho_1)$, which means that $\mathcal{N}_{B + D}(v) = \{0\}$. Thus, \eqref{calm} holds for $\rho = \rho_1 > 0$ and any $\delta>0$ and $\eta>0$ in this case.

Next, suppose that $\Theta(\zeta) = 0$. Then $\|\zeta - P_D(\zeta)\| = 1 > 0$. Set
\begin{equation}\label{defkappa}
\delta = 1,\ \ \rho = 0.25\ \ {\rm and}\ \ \eta = 4(1 + \|x\|),
\end{equation}
and fix any $v\in B(\zeta, \rho)$. If $\Theta(v)>0$, then $\mathcal{N}_{B + D}(v) = \emptyset$ in view of \eqref{Nor}. On the other hand, if $\Theta(v)<0$, then $\mathcal{N}_{B + D}(v) = \{0\}$. Therefore, when $\Theta(v) \not= 0$, one can check that \eqref{calm} holds with $\delta$ and $\eta$ as in \eqref{defkappa}.

Now, suppose that $\Theta(v) = 0$. Then we have from \eqref{Nor} that $\mathcal{N}_{B + D}(v) = \R_+(v - P_D(v))$. Thus, for any $x_1\in \mathcal{N}_{B + D}(v)\cap B(x, \delta)$, there exists $t\geq 0$ such that $x_1 = t(v - P_D(v))$ and $\|x_1 - x\|\leq \delta = 1$. Then
\[
\begin{aligned}
1&\geq\|x_1 - x\| = \|t(v - P_D(v)) - x\|\geq t\|v - P_D(v)\| - \|x\|\\
& \ge t\|\zeta - P_D(\zeta)\| - t\|[v - P_D(v)] - [\zeta - P_D(\zeta)]\| - \|x\|\\
& \overset{\rm (a)}\ge t\|\zeta - P_D(\zeta)\| - 2t\|v - \zeta\| - \|x\|\overset{\rm (b)}\ge t\|\zeta - P_D(\zeta)\| - 2t\rho - \|x\| = 0.5 t - \|x\|,
\end{aligned}
\]
where (a) follows from the Lipschitz continuity of $P_D(\cdot)$, as $D$ is a closed convex set, (b) holds because $v \in B(\zeta, \rho)$, and the last equality holds because $\|\zeta - P_D(\zeta)\|= 1$ and $\rho = 0.25$. Rearranging terms in the above inequality, we have $t \leq 2(1 + \|x\|)$. Hence,
\[
\begin{aligned}
x_1 = t(v - P_D(v)) &= t(\zeta - P_D(\zeta)) + t(v - P_D(v) - (\zeta - P_D(\zeta)))\\
&\overset{\rm (a)}\in \mathcal{N}_{B + D}(\zeta) + t\|v - P_D(v) - (\zeta - P_D(\zeta))\|B(0,1)\\
&\overset{\rm (b)}\subseteq \mathcal{N}_{B + D}(\zeta) + 2t\|v - \zeta\|B(0,1)\overset{\rm (c)}\subseteq \mathcal{N}_{B + D}(\zeta) + \eta\|v - \zeta\|B(0,1),
\end{aligned}
\]
where (a) follows from \eqref{Nor}, (b) holds because of the Lipschitz continuity of $P_D(\cdot)$, and (c) follows from the fact that $t\le 2(1 + \|x\|)$ and the definition of $\eta$ in \eqref{defkappa}. Thus, we have shown that the mapping $v\mapsto \mathcal{N}_{B + D}(v)$ is calm at $\zeta$ with respect to $x$. This completes the proof of (ii).

(iii): Notice that $\argmin h = \{0\}$ and the origin is not feasible for \eqref{KLproblem} by assumption. Thus, from Corollary~\ref{KL}, in order to prove that the function $F$ in \eqref{KLproblem} with $m = 1$ is a KL function with exponent $\frac{1}{2}$, we only need to show that $F_{\bar{\lambda}}:= h + \bar{\lambda}\, q_1\circ A_1$ is a KL function with exponent $\frac{1}{2}$ for all $\bar{\lambda} > 0$.\footnote{Indeed, Assumption (i) in Corollary~\ref{KL1} implies that any Lagrange multiplier (which exists thanks to Assumption~\ref{assumptionxxx} and the coerciveness of $h$) has to be positive.} To this end, according to \cite[Theorem~4.1]{LiPong18}, it suffices to show that $F_{\bar{\lambda}}$ satisfies the Luo-Tseng error bound.
Clearly, $\Argmin F_{\bar\lambda}$ is compact and nonempty for any $\bar \lambda > 0$ because $h$ is coercive and $\inf q_1 > -\infty$ by Assumption~\ref{EllStrCon}.

Fix any $\bar\lambda > 0$ and any $\bar{x} \in\Argmin F_{\bar{\lambda}}$. Since $q_1$ is strictly convex and $\bar \lambda > 0$, we have that $A_1x\equiv A_1\bar x$ over $\Argmin F_{\bar{\lambda}}$. Thus, we can derive that $\bar\lambda A_1^T\nabla q_1(A_1x)\equiv \bar\lambda A_1^T\nabla q_1(A_1\bar x)=: \bar{\zeta}$ over $\Argmin F_{\bar{\lambda}}$. Moreover, in view of \cite[Theorem~10.1]{RoWe98} and \cite[Exercise~8.8(c)]{RoWe98}, we have $-\bar{\zeta}\in\partial h(\bar{x})$. Now, as in \cite[Section~3.3]{ZhSo17}, we define
\[
\Gamma_f(y):= \left\{x\in\R^n:\; A_1x = y\right\} \text{  and  } \Gamma_{h}(\zeta):= \left\{x\in\R^n:\; -\zeta\in\partial h(x)\right\}.
\]
Then $ \Gamma_{h}(\bar{\zeta}) = (\partial h)^{-1}(-\bar{\zeta})$ and we see from item (i) that this is a polyhedron. This together with \cite[Corollary~3]{BaBL99} shows that $\{\Gamma_f(A_1\bar{x}), \Gamma_{h}(\bar{\zeta})\}$ is boundedly linearly regular. Next, since $-\bar{\zeta}\in\partial h(\bar{x})$, we have from item (ii) that the subdifferential mapping $\partial h$ is metrically subregular at $\bar{x}$ for $-\bar{\zeta}$. Thus, in view of \cite[Theorem~2]{ZhSo17}, \cite[Corollary~1]{ZhSo17} and the arbitrariness of $\bar{x}\in\Argmin F_{\bar{\lambda}}$, we conclude that $F_{\bar{\lambda}}$ satisfies the Luo-Tseng error bound. This completes the proof.
\end{proof}

\subsection{Case 3: $h$ is the sum of group LASSO penalty and a block-structured closed gauge}
In this section, we suppose that
\begin{equation}\label{sec53h}
  h(x) = \sum\limits_{J\in\mathcal{J}}\big(\|x_J\| + \kappa_J(x_J)\big),
\end{equation}
where $\mathcal{J}$ is a partition of $\{1,2,\cdots, n\}$, $x_J\in\R^{|J|}$ is the subvector of $x\in \R^n$ indexed by $J\in\mathcal{J}$, and $\kappa_J: \R^{|J|} \to \R\cup\{\infty\}$ is a closed gauge for each $J\in\mathcal{J}$. The group LASSO regularizer, which is convex but not polyhedral in general, is motivated by the desire to induce sparsity among variables at a group level and has found applications in signal processing and statistics; see, e.g., \cite{ElMi09,YuLi06}.
\begin{theorem}\label{KLGL}
Consider \eqref{KLproblem} with $m = 1$ and $h$ defined as in \eqref{sec53h}. Suppose that Assumptions~\ref{assumptionxxx} and \ref{EllStrCon} hold and the origin is not feasible for \eqref{KLproblem}. Then $F$ is a KL function with exponent $\frac{1}{2}$.
\end{theorem}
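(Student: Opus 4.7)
The plan is to mirror the proof of Theorem~\ref{KLnorm}(iii) almost verbatim, exploiting the block-separable structure of $h$ in \eqref{sec53h} to reduce the two key analytic ingredients---polyhedrality of the level sets of $(\partial h)^{-1}$ and metric subregularity of $\partial h$---to the per-block statements already established in Theorem~\ref{KLnorm}(i)--(ii). First I note that $\argmin h = \{0\}$, since each summand $\|x_J\| + \kappa_J(x_J)$ is nonnegative and vanishes only at $x_J = 0$; together with the standing hypothesis that the origin is infeasible, this gives condition~(i) of Corollary~\ref{KL1}. Assumption~\ref{assumptionxxx} ensures the existence of a Lagrange multiplier $\bar\lambda\ge 0$ for \eqref{KLproblem} (as pointed out in the footnote there), and combined with the infeasibility of the origin this forces $\bar\lambda > 0$. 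Thus, by Corollary~\ref{KL1}, it suffices to prove that for every $\bar\lambda > 0$ the Lagrangian
\[
F_{\bar\lambda}(x) := h(x) + \bar\lambda\, q_1(A_1 x)
\]
is a KL function with exponent $\tfrac12$. Following the template of Theorem~\ref{KLnorm}(iii), I will verify the Luo-Tseng error bound for $F_{\bar\lambda}$ and invoke \cite[Theorem~4.1]{LiPong18}.

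Fix any $\bar\lambda > 0$ and any $\bar x\in\Argmin F_{\bar\lambda}$. Exactly as in Theorem~\ref{KLnorm}(iii), the strict convexity of $q_1$ (Assumption~\ref{EllStrCon}) forces $A_1 x\equiv A_1\bar x$ over $\Argmin F_{\bar\lambda}$, hence the vector $\bar\zeta := \bar\lambda\, A_1^T\nabla q_1(A_1\bar x)$ is constant on $\Argmin F_{\bar\lambda}$, $-\bar\zeta\in\partial h(\bar x)$, and $\Argmin F_{\bar\lambda}\subseteq \Gamma_f(A_1\bar x)\cap\Gamma_h(\bar\zeta)$, with $\Gamma_f$ and $\Gamma_h$ as defined in Theorem~\ref{KLnorm}(iii). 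The remaining task is to show that (a) $\Gamma_h(\bar\zeta) = (\partial h)^{-1}(-\bar\zeta)$ is a polyhedron, and (b) $\partial h$ is metrically subregular at $\bar x$ for $-\bar\zeta$.

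Both items follow quickly from block-separability. Writing $h_J := \|\cdot\| + \kappa_J$ on $\R^{|J|}$, one has the direct-sum decomposition $\partial h(x) = \prod_{J\in\mathcal{J}} \partial h_J(x_J)$, whence
\[
(\partial h)^{-1}(-\bar\zeta) = \prod_{J\in\mathcal{J}} (\partial h_J)^{-1}(-\bar\zeta_J).
\]
Each factor on the right is a polyhedron by Theorem~\ref{KLnorm}(i), and a finite Cartesian product of polyhedra is a polyhedron, giving (a). For (b), Theorem~\ref{KLnorm}(ii) supplies the metric subregularity of each $\partial h_J$ at $\bar x_J$ for $-\bar\zeta_J$; choosing a product neighbourhood of $\bar x$ and summing the per-block error-bound estimates (the squared distances add, and the combined modulus can be taken as the sum or maximum of the per-block moduli) yields the metric subregularity of $\partial h$ at $\bar x$ for $-\bar\zeta$, proving (b).

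With (a) and (b) in hand, the closing steps of Theorem~\ref{KLnorm}(iii) transfer unchanged: polyhedrality of both $\Gamma_f(A_1\bar x)$ and $\Gamma_h(\bar\zeta)$, combined with \cite[Corollary~3]{BaBL99}, gives bounded linear regularity of the pair $\{\Gamma_f(A_1\bar x),\Gamma_h(\bar\zeta)\}$; combining this with the metric subregularity in (b), \cite[Theorem~2]{ZhSo17} and \cite[Corollary~1]{ZhSo17} yield the Luo-Tseng error bound for $F_{\bar\lambda}$ at $\bar x$. Since $\bar x\in \Argmin F_{\bar\lambda}$ was arbitrary, \cite[Theorem~4.1]{LiPong18} promotes this to the KL property of $F_{\bar\lambda}$ with exponent $\tfrac12$, and Corollary~\ref{KL1} then transfers this exponent to $F$ itself. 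The only genuinely new input beyond Theorem~\ref{KLnorm} is the blockwise assembly used in (b); I expect this to be the main (though mild) obstacle, as it requires a careful choice of a product neighbourhood on which all per-block metric subregularity estimates apply simultaneously, but it is otherwise routine.
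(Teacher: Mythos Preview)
Your proposal is correct and follows essentially the same route as the paper's proof: both reduce to Corollary~\ref{KL1}, verify the Luo--Tseng error bound for $F_{\bar\lambda}$ via \cite[Theorem~2, Corollary~1]{ZhSo17}, and obtain the two required ingredients---polyhedrality of $(\partial h)^{-1}(-\bar\zeta)$ and metric subregularity of $\partial h$---by writing $\partial h$ as the Cartesian product $\prod_{J\in\mathcal J}\partial h_J$ and invoking Theorem~\ref{KLnorm}(i)--(ii) blockwise. The paper's version of your step~(b) is exactly what you anticipate: it takes $\rho=\min_{J}\rho_J$ and $\eta=\max_{J}\eta_J$ and sums the squared per-block distance estimates.
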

\begin{proof}
As in the proof of Theorem~\ref{KLnorm}(iii), we again have $\Argmin F_{\bar \lambda}$ being compact and nonempty for any $\bar\lambda > 0$, where $F_{\bar{\lambda}}:= h + \bar{\lambda}\, q_1\circ A_1$. Fix any $\bar \lambda > 0$ and any $\bar{x} \in\Argmin F_{\bar{\lambda}}$, and note that $A_1x\equiv A_1\bar x$ over $\Argmin F_{\bar{\lambda}}$. Hence, $\bar\lambda A_1^T\nabla q_1(A_1x)\equiv \bar\lambda A_1^T\nabla q_1(A_1\bar x)=: \bar{\zeta}$ over $\Argmin F_{\bar{\lambda}}$. Moreover, we also have $-\bar{\zeta}\in\partial h(\bar{x})$ in view of \cite[Theorem~10.1]{RoWe98} and \cite[Exercise~8.8(c)]{RoWe98}. Then we define as before the sets
\[
\Gamma_f(y):= \left\{x\in\R^n:\; A_1x = y\right\} \text{  and  } \Gamma_{h}(\zeta):= \left\{x\in\R^n:\; -\zeta\in\partial h(x)\right\}.
\]
Since $\Gamma_{h}(\bar{\zeta}) = \prod_{J\in\mathcal{J}}\Gamma_{h_J}(\bar{\zeta}_J)$, and we have from Theorem~\ref{KLnorm}(i) that $\Gamma_{h_J}(\bar{\zeta}_J)$ is a polyhedron for every $J\in\mathcal{J}$, we deduce that $\Gamma_{h}(\bar{\zeta})$ is polyhedral. Thus, by \cite[Corollary~3]{BaBL99}, $\{\Gamma_f(A_1\bar{x}), \Gamma_{h}(\bar{\zeta})\}$ is boundedly linearly regular.

Next, since $-\bar{\zeta}\in\partial h(\bar{x})$, we have $-\bar{\zeta}_J\in \partial h_J(\bar{x}_J)$ for each $J\in {\cal J}$.
Now, Theorem~\ref{KLnorm}(ii) states that $\partial h_J$ is metrically subregular at $\bar{x}_J$ for $-\bar{\zeta}_J$, i.e., for each $J\in\mathcal{J}$, there exist constants $\eta_J, \rho_J > 0$ such that
\[
\d(x_J, (\partial h_J)^{-1}(-\bar{\zeta}_J))\leq \eta_J\d(-\bar{\zeta}_J, \partial h_J(x_J)) \text{ for all } x_J\in B(\bar{x}_J, \rho_J).
\]
Define $\rho = \min\limits_{J\in\mathcal{J}}\rho_J$. Then for all $x\in B(\bar{x}, \rho)$, we have from the above display that
\[
\begin{aligned}
\d^2\left(x, (\partial h)^{-1}(-\bar{\zeta})\right)&= \sum_{J\in\mathcal{J}} \d^2(x_J, \big(\partial h_J\big)^{-1}(-\bar{\zeta}_J))\\
&\leq \sum_{J\in\mathcal{J}} \eta_J^2\d^2(-\bar{\zeta}_J, \partial h_J(x_J))\leq \eta^2\d^2(-\bar{\zeta}, \partial h(x))),
\end{aligned}
\]
where $\eta = \max\limits_{J\in\mathcal{J}} \eta_J$. Thus, $\partial h$ is metrically subregular at $\bar{x}$ for $-\bar{\zeta}$.

Now, in view of \cite[Theorem~2]{ZhSo17}, \cite[Corollary~1]{ZhSo17} and the arbitrariness of $\bar{x}\in\Argmin F_{\bar{\lambda}}$, we conclude that $F_{\bar{\lambda}}$ satisfies the Luo-Tseng error bound. Then the desired conclusion follows from \cite[Theorem~4.1]{LiPong18} and Corollary~\ref{KL}. This completes the proof.
\end{proof}

\section{Numerical experiments}\label{sec6}

In this section, we consider two specific applications from signal processing that can be formulated as \eqref{P0} and \eqref{P1}, respectively. We then illustrate our algorithms on solving random instances of these problems.%We consider the problem of Group LASSO with the signal contaminated with Gaussian noise, and the problem of compressed sensing with the signal contaminated with Cauchy noise.

\subsection{Group sparse signals with Gaussian noise}
In signal processing, when the signal is contaminated with Gaussian noise, the least squares loss function can be used \cite{CaRT06,ChDS01}. If it is also known that the original signal belongs to a union of subspaces, the group LASSO penalty \cite{YuLi06} can be used for inducing such structure in the recovered solution; see \cite{ElMi09}. Here, motivated by the used of the $\ell_{1-2}$ regularizer in compressed sensing in \cite{YiLH15}, we consider a natural extension that subtracts a positive multiple of the norm function from the group LASSO penalty function:
\begin{equation}\label{E1}
\begin{array}{rl}
\min\limits_{x\in\R^n} & \sum\limits_{J\in\mathcal{J}}\|x_J\| - \mu\|x\| \\
{\rm s.t.} & \|Ax - b\|\leq \sigma,
  \end{array}
\end{equation}
where $\mu \in (0,1)$, $A\in\R^{p\times n}$ has full row rank, $b\in\R^p$, $\sigma \in (0, \|b\|)$, $\mathcal{J}$ is a partition of $\{1,2,\cdots, n\}$, $x_J\in\R^{|J|}$ is the subvector of $x$ indexed by $J\in\mathcal{J}$. %Since $A$ has full row rank, we see that there exists $\tx\in\R^n$ such that $\|A\tx - b\|< \overline{\sigma}$.

Since the feasible set of \eqref{E1} is unbounded, Algorithm~\ref{SQP_retract} cannot be directly applied to solving \eqref{E1}. Fortunately, one can show that \eqref{E1} is equivalent to the following model for some $M > 0$:
\begin{equation}\label{E3}
  \begin{array}{rl}
\min\limits_{x\in\R^n} & \sum\limits_{J\in\mathcal{J}}\|x_J\| - \mu\|x\| \\
{\rm s.t.} & \|Ax - b\|^2\leq \sigma^2,\\
           & \max\limits_{J\in \mathcal{J}}\|x_J\| \leq M.
  \end{array}
\end{equation}
Indeed, as $A$ has full row rank, we know that the feasible set of \eqref{E1} is nonempty because it contains $\tx := A^\dagger b$. Since the objective of \eqref{E1} is also level-bounded, the solution set of \eqref{E1}, denoted by $S$, is nonempty. Hence,
\[
S\subseteq \left\{x\in\R^n:\; \sum\limits_{J\in\mathcal{J}}\|x_J\| - \mu\|x\| \le \sum\limits_{J\in\mathcal{J}}\|\tx_J\| - \mu\|\tx\|\right\}.
\]
Thus, $x\in S$ implies $(1 - \mu)\max\limits_{J\in \mathcal{J}}\|x_J\| \le (1 - \mu)\|x\|\le \sum\limits_{J\in\mathcal{J}}\|x_J\| - \mu\|x\| \le \sum\limits_{J\in\mathcal{J}}\|\tx_J\| - \mu\|\tx\|$. This shows that \eqref{E3} has the same set of optimal solutions as \eqref{E1} if $M \ge (1 - \mu)^{-1}[\sum\limits_{J\in\mathcal{J}}\|\tx_J\| - \mu\|\tx\|]$.

With these choices of $M > 0$, problem \eqref{E3} is a special case of \eqref{P0} with $m = 1$, $P_1(x)= \sum\limits_{J\in\mathcal{J}}\|x_J\|$, $P_2(x) = \mu\|x\|$, $g_1(x) = \|Ax - b\|^2 - \sigma^2$ and $C = \{x\in\R^n:\; \max\limits_{J\in \mathcal{J}}\|x_J\| \leq M\}$. Moreover, Assumption~\ref{assumption1} holds for \eqref{E3} with $\xfeas = \tx$. Therefore, we can apply Algorithm~\ref{SQP_retract} to \eqref{E3}. In our numerical experiments below, we will be solving random instances of \eqref{E3} with $$M = (1 - \mu)^{-1}[\sum\limits_{J\in\mathcal{J}}\|\tx_J\| - \mu\|\tx\|].$$

\subsection{Compressed sensing with Cauchy noise for complex signals}\label{sec6.2}
{\color{black} In compressed sensing for real signals, when the measurement (say, of dimension $2p$) is contaminated with Cauchy noise, the Lorentzian norm $\|y\|_{LL_2,\gamma}:= \sum\limits_{j=1}^{2p}\log(1 + \gamma^{-2}y_j^2)$ can be employed as the loss function \cite{CaBA10,CaRA16}, where $\gamma > 0$ and $y\in \R^{2p}$. Here we explore the scenario when the signal and the measurement matrix have \emph{complex} entries with the transmission contaminated by Cauchy noise. Specifically, in this setting, an unknown sparse vector ${\bf z}_0\in {\mathbb{C}}^n$ is compressed via multiplication with a given matrix ${\bf A}\in \mathbb{C}^{p\times n}$ and the resulting ${\bf A} {\bf z}_0$ is transmitted. The target is to recover approximately the vector ${\bf z}_0$ from the received signal ${\bf b} := {\bf A}{\bf z}_0 + \varepsilon$, where $\varepsilon \in {\mathbb{C}}^p$ is the noise vector, with the real and imaginary parts of each entry following an i.i.d. Cauchy distribution with mean $0$. In view of the structure of $\varepsilon$, we will view it as a random vector in $\R^{2p}$ with i.i.d. Cauchy entries and consider the Lorentzian norm for this real vector in our compressed sensing model below.
%As we will see soon, this will lead to an optimization model with group  loss function which takes the group lasso

To derive the model, we start by writing
\[
{\bf A} := {\bf A}_{\rm re} + i {\bf A}_{\rm im}, \ \ {\bf b} := {\bf b}_{\rm re} + i {\bf b}_{\rm im},
\]
where ${\bf A}_{\rm re}$, ${\bf A}_{\rm im}\in \R^{p\times n}$, and ${\bf b}_{\rm re}$, ${\bf b}_{\rm im}\in \R^{p}$, and define
\begin{equation}\label{defineAb}
A := \begin{bmatrix}
{\bf A}_{\rm re} & -{\bf A}_{\rm im}\\
{\bf A}_{\rm im} & {\bf A}_{\rm re}
\end{bmatrix}\in \R^{2p\times 2n},\ \ \ b:= \begin{bmatrix}
  {\bf b}_{\rm re}\\{\bf b}_{\rm im}
\end{bmatrix}\in \R^{2p}.
\end{equation}
Then, notice that for any complex vector ${\bf z} = {\bf x} + i {\bf y}\in {\mathbb{C}}^n$, where ${\bf x}$ and ${\bf y}$ are respectively the real part and imaginary part of ${\bf z}$, it holds that
\[
{\bf A}{\bf z} - {\bf b} = {\bf A}_{\rm re}{\bf x} - {\bf A}_{\rm im}{\bf y} - {\bf b}_{\rm re} + i [{\bf A}_{\rm im}{\bf x} + {\bf A}_{\rm re}{\bf y} - {\bf b}_{\rm im}].
\]
Thus, we can consider the following loss function for the Cauchy measurement noise:
\[
\left\|
\begin{bmatrix}
  {\bf A}_{\rm re}{\bf x} - {\bf A}_{\rm im}{\bf y} - {\bf b}_{\rm re}\\
  {\bf A}_{\rm im}{\bf x} + {\bf A}_{\rm re}{\bf y} - {\bf b}_{\rm im}
\end{bmatrix} \right\|_{LL_2,\gamma} \!\!\!\!\! = \|Ax - b\|_{LL_2,\gamma},\ \ {\rm where}\ \ x := \begin{bmatrix}
  {\bf x}\\{\bf y}
\end{bmatrix},
\]
and $\gamma > 0$ is an estimate on the standard deviation of the Cauchy distribution.
Based on the above observation and motivated by the use of $\ell_{1-2}$-type regularizer in \cite{YiLH15} for inducing sparsity, we consider the following model for recovering sparse signal with entries in complex numbers from measurements contaminated with Cauchy measurement noise:
\begin{equation}\label{E2}
  \begin{array}{rl}
\min\limits_{x\in\R^{2n}} & \sum\limits_{J\in\mathcal{J}}\|x_J\| - \mu\|x\| \\
{\rm s.t.} & \|Ax - b\|_{LL_2,\gamma}\leq \overline{\sigma},
  \end{array}
\end{equation}
where $A\in\R^{2p\times 2n}$ and $b\in \R^{2p}$ are given in \eqref{defineAb}, $\mu \in (0,1)$ and $\overline{\sigma} \in (0,\|b\|_{LL_2,\gamma})$, $\mathcal{J} := \bigcup_{i=1}^n\mathcal{J}_i$ with $\mathcal{J}_i := \{i,i+n\}$ for each $i$, and $x_J\in\R^2$ is the subvector of $x$ indexed by $J\in\mathcal{J}$. Moreover, we assume without loss of generality that $A$ has full row rank: this can be inferred from the fact that the measurement matrix ${\bf A}$ typically has full row rank. %Since $A$ has full row rank, there exists $\hx\in\R^n$ such that $A \hx = b$.

Although the feasible region of \eqref{E2} is unbounded and is not a special case of \eqref{P1}, one can argue as in the discussion following \eqref{E3} that \eqref{E2} is equivalent to the following model:
\begin{equation}\label{E4}
  \begin{array}{rl}
\min\limits_{x\in\R^{2n}} & \sum\limits_{J\in\mathcal{J}}\|x_J\| - \mu\|x\| \\
{\rm s.t.} & \|Ax - b\|_{LL_2,\gamma}\leq \overline{\sigma},\\
           & \max\limits_{J\in \mathcal{J}}\|x_J\| \leq M_1
  \end{array}
\end{equation}
as long as $M_1 \ge (1 - \mu)^{-1}[\sum\limits_{J\in\mathcal{J}}\|(A^\dagger b)_J\| - \mu\|A^\dagger b\|]$.
Notice that \eqref{E4} is a special case of \eqref{P1} with $P_1(x) = \sum\limits_{J\in\mathcal{J}}\|x_J\|$, $P_2(x) = \mu\|x\|$,  $\sigma=\overline{\sigma}$, $\ell(y) = \|y\|_{LL_2,\gamma}$, and $C = \{x\in\R^{2n}:\; \max\limits_{J\in \mathcal{J}}\|x_J\| \leq M_1\}$. One can see  from Example \ref{Loren} that Assumption~\ref{gassum} is satisfied for \eqref{E4}, and Assumption~\ref{assumption4} is satisfied for \eqref{E4} with $\xfeasss = A^\dagger b$. Hence, we can apply Algorithm \ref{SQP_retract2} to \eqref{E4}. In our numerical experiments below, we will be solving random instances of \eqref{E2} with $$M_1 = (1 - \mu)^{-1}[\sum\limits_{J\in\mathcal{J}}\|(A^\dagger b)_J\| - \mu\|A^\dagger b\|].$$ }

\subsection{Numerical results}
In this subsection, we perform numerical experiments to study the performance of Algorithm~\ref{SQP_retract} on solving \eqref{E3} and that of Algorithm~\ref{SQP_retract2} on solving \eqref{E4}.
We compare our approaches with Algorithm~\ref{ESQM} below, which is an adaptation of the ESQM with line search (ESQM$_{\rm ls}$) from \cite{Au13}. The ESQM$_{\rm ls}$ in \cite{Au13} was originally designed for \eqref{P0} with $P$ and all the $g_i$'s being twice continuously differentiable on $C$, and subsequential convergence was established under suitable assumptions. Here, we adapt the ESQM$_{\rm ls}$ as Algorithm~\ref{ESQM} for our problems. Note that the convergence analysis in \cite{Au13} can be readily adapted to establish subsequential convergence of Algorithm~\ref{ESQM} for solving \eqref{E3}. Similar to our algorithms, Algorithm~\ref{ESQM} also involves affine approximations in each subproblem (see \eqref{subproblemhaha}). However, unlike our approaches, it does not force all iterates to be feasible.

\begin{algorithm}
\caption{(Adaptation of) ESQM$_{\rm ls}$ in \cite{Au13} for \eqref{P0}}\label{ESQM}
\begin{algorithmic}
\STATE
\begin{description}
  \item[\bf Step 0.] Choose $x^0\in C$, $c \in (0, 1)$, $\beta_0 > 0$, $t\in(0, 1)$ and $\delta > 0$. Set $k = 0$.
  \item[\bf Step 1.] Pick any $\xi^k\in\partial P_2(x^k)$ and compute
  \begin{equation}\label{subproblemhaha}
  \begin{array}{rl}
  (\widetilde u,\widetilde s) = \argmin\limits_{(x,s)\in\R^{n+1}} & P_1(x) -\langle \xi^k, x - x^k\rangle + \frac{1}{\beta_k} s + \frac{1}{2\beta_k}\|x - x^k\|^2\\ [2pt]
      {\rm s.t.}& g_i(x^k) + \langle \nabla g_i(x^k), x - x^k\rangle \leq s,\ i = 1, \ldots, m,\\ [2pt]
      & s \geq 0,\ x\in C.
  \end{array}
  \end{equation}

  \item[\bf Step 2.] Let $t_k$ be the largest element in $\{1, t, t^2, \ldots\}$ so that
  \begin{equation*}
    F_k(x^k + t_k (\widetilde{u} - x^k)) \le F_k(x^k) - c t_k \beta_k^{-1} \|\widetilde{u} - x^k\|^2
  \end{equation*}
  where $F_k(x):= P(x) + \beta^{-1}_k \max\{\widehat g(x),0\}$ and $\widehat g(x) := \max\limits_{1\le i\le m}\{g_i(x)\}$.

  \item[\bf Step 3.] If $g_i(x^k) + \langle \nabla g_i(x^k), \widetilde{u} - x^k\rangle \leq 0$ for all $i$, then $\beta_{k+1} = \beta_k$, otherwise $\beta_{k+1}^{-1} = \beta_k^{-1} + \delta$. Set $u^k = \widetilde u$ and $x^{k+1} = x^k + t_k (\widetilde{u} - x^k)$. Update $k \leftarrow k+1$ and go to \textbf{Step 1}.
\end{description}
\end{algorithmic}
\end{algorithm}

We next present the parameters, initial points and termination criteria used in our numerical experiments. Solution approaches for the subproblems involved are outlined in the appendix. All numerical experiments are performed in {\color{black} MATLAB R2016a on a 64-bit PC with an Intel(R) Core(TM) i7-10710U CPU (@1.10GHz, 1.61GHz) and 16GB of RAM.}

\textbf{Parameter settings:} In Algorithms~\ref{SQP_retract} and \ref{SQP_retract2}, we let $c = 10^{-4}$, $\eta = \frac{1}{2}$, $\underline{\beta} = 10^{-8}$, $\overline{\beta} = 10^8$. We set $\beta^0_0=1$ and, for $k \geq 1$, we choose
\[
\beta_k^0 = \begin{cases}
  \min\{\max\{\underline\beta,2\beta_{k-1}^0\},\overline\beta\} & {\rm if}\ \beta_{k-1} = \beta_{k-1}^0,\\
  \min\{\max\{\underline\beta,\beta_{k-1}\},\overline\beta\} & {\rm otherwise.}
\end{cases}
\]

In Algorithm~\ref{ESQM}, we let $c = 10^{-4}$, $\beta_0 = 1$, $t = \frac{1}{2}$. It appears that the performance of ESQM$_{\rm ls}$ can be sensitive to the choice of $\delta$. In our experiments below, we consider 3 choices of $\delta$: $0.5$, $0.1$ or $0.02$.

\textbf{Slater point:} We take the Slater points as $\xfeas = \xfeasss = A^\dagger b$, and compute $A^\dagger b$ via the MATLAB commands \verb+[Q,R]=qr(A',0); x=Q*(R'\b)+.

\textbf{Initialization:} For \eqref{E3}, we first solve \eqref{E1} with $\mu = 0$ approximately using SPGL1 \cite{BeFr09} (version 2.1), under default settings, to obtain an approximate solution $x_{\rm spgl1}$. We then project $x_{\rm spgl1}$ onto the box $\{x:\; \max\limits_{J\in{\cal J}}\|x_J\|\le M\}$ to obtain $\widehat x_{\rm spgl1}$. The initial point $x^0$ is then generated as follows:
\begin{equation}\label{initial1}
x^0 = \begin{cases}
  \widehat x_{\rm spgl1} + \tau(A^\dagger b - \widehat x_{\rm spgl1}) & {\rm if}\ \|A\widehat x_{\rm spgl1} - b\| > \sigma,\\
  \widehat x_{\rm spgl1} & {\rm otherwise},
\end{cases}
\end{equation}
where $\tau\in(0, 1)$ is a positive root of the quadratic equation $\|A(\widehat x_{\rm spgl1} + \tau(A^\dagger b - \widehat x_{\rm spgl1})) - b\|^2=\sigma^2$.
%In fact, since $\|A\xfeas - b\|^2 < \sigma^2$ and $\|Ax_{\rm spgl1} - b\|^2 > \sigma^2$, there must be exists $t\in[0, 1]$ such that $x = x_{\rm spgl1} + t(\xfeas - x_{\rm spgl1})$ with $\|Ax - b\|^2 = \sigma^2$.

For \eqref{E4}, we first find a positive constant $\tau_0\in(0, 1)$ so that $\|b - A(\tau_0 A^\dagger b)\|_{LL_2,\gamma} = \overline\sigma$.\footnote{Note that $b - A(\tau_0 A^\dagger b) = (1 - \tau_0)b$, and hence such a $\tau_0$ can be found efficiently by first applying the Newton's method to obtain a root $s_*$ of the function $s\mapsto \|\sqrt{s}b\|_{LL_2,\gamma} - \overline\sigma$ (initialized at $s = 0$) and then setting $\tau_0 = 1 - \sqrt{s_*}$.}
Then we apply the SPGL1 \cite{BeFr09} (version 2.1) with default settings to approximately solve the following convex problem to obtain an approximate solution $x_{\rm spgl1}$:
\begin{equation}\label{SPGL1nonconvexL1}
\begin{array}{rl}
  \min\limits_{x\in \R^n} & \sum\limits_{J\in\mathcal{J}}\|x_J\| \\
  {\rm s.t.} & \ell^{\tau_0A^\dagger b}(Ax - b)\le \widetilde\sigma^{\tau_0A^\dagger b},
\end{array}
\end{equation}
where $\ell^{\tau_0A^\dagger b}$ and $\widetilde\sigma^{\tau_0A^\dagger b}$ are defined as in \eqref{defell} and \eqref{defsigma} respectively, with $\tau_0A^\dagger b$ in place of $y$ there. We then project $x_{\rm spgl1}$ onto the box $\{x:\; \|x\|_\infty\le M_1\}$ to obtain $\widetilde x_{\rm spgl1}$. The initial point $x^0$ is then generated as follows:
\begin{equation}\label{initial2}
x^0 = \begin{cases}
  \widetilde x_{\rm spgl1} + \tau_1(A^\dagger b - \widetilde x_{\rm spgl1}) & {\rm if}\ \|A\widetilde x_{\rm spgl1} - b\|_{LL_2,\gamma} > \overline\sigma,\\
  \widetilde x_{\rm spgl1} & {\rm otherwise},
\end{cases}
\end{equation}
where $\tau_1\in (0, 1)$ is chosen so that $\left\|b - A\left[\widetilde x_{\rm spgl1} + \tau_1(A^\dagger b - \widetilde x_{\rm spgl1})\right]\right\|_{LL_2,\gamma} = \overline\sigma$.\footnote{Note that $b - A[\widetilde x_{\rm spgl1} + \tau_1(A^\dagger b - \widetilde x_{\rm spgl1})] = (1 - \tau_1)(b-A \widetilde x_{\rm spgl1})$} One can find such a constant $\tau_1$ efficiently by first applying Newton's method to obtain a root $s_*$ of the function $s\mapsto \|\sqrt{s}(b - A\widetilde x_{\rm spgl1})\|_{LL_2,\gamma} - \overline\sigma$ (initialized at $s = 0$) and then setting $\tau_1 = 1 - \sqrt{s_*}$.

\textbf{Termination criteria.} The termination criteria are based on verifying approximately conditions (i), (ii) and (iii) in Definition~\ref{Stationary} by $x = u^k$ and $\lambda = \lambda_k$, where $u^k$ comes from the solution of the subproblems \eqref{subproblem2}, \eqref{subproblem4} or \eqref{subproblemhaha}, and $\lambda_k$ is a Lagrange multiplier corresponding to the affine inequality constraint induced from $g_1$.

Specifically, the algorithms are terminated when
\begin{equation}\label{termination1}
\max\{\|\xi_u^k - \xi^k\| + {\frak L}_k\|u^k - x^k\|, 10^2\cdot\max\{|\lambda_k g_1(u^k)|, g_1(u^k)\}\} \leq 10^{-4}\cdot\max\{\|u^k\|, 1\},
\end{equation}
where $\xi^k_u = \argmin\limits_{\xi\in \mu\partial \|u^k\|}\{\|\xi\|\}$ and
\[
{\frak L}_k =
\begin{cases}
  2\lambda_k \|A\|^2 + \beta_k^{-1} & \mbox{for \eqref{E3}},\\
  2\lambda_k \|A\|^2\gamma^{-2} + \beta_k^{-1} & \mbox{for \eqref{E4}}.
\end{cases}
\]
Moreover, as a safeguard, we also terminate when stepsizes are \emph{small}. Specifically, we terminate Algorithms~\ref{SQP_retract} and \ref{SQP_retract2} when $\beta_k \le 10^{-10}$, and terminate Algorithm~\ref{ESQM} when $t_k \le 10^{-10}$. Note that when \eqref{termination1} is satisfied, we have $g_1(u^k)\le 10^{-6}\cdot\max\{\|u^k\|, 1\}$ and $|\lambda_k g_1(u^k)|\le 10^{-6}\cdot\max\{\|u^k\|, 1\}$, and we can deduce from the KKT conditions of subproblems~\eqref{subproblem2}, \eqref{subproblem4} and \eqref{subproblemhaha} that $\d(0, \partial F_k(u^k)) \leq 10^{-4}\cdot\max\{\|u^k\|, 1\}$ when $u^k\neq 0$, where $F_k(x) = P_1(x) - P_2(x) + \lambda_kg_1(x) + \delta_C(x)$.

\subsubsection{ Group LASSO with Gaussian noise. }
We compare Algorithm~\ref{SQP_retract} and Algorithm~\ref{ESQM} on solving random instances of \eqref{E3}. Specifically, we generate an $A\in \R^{p\times n}$ with i.i.d. standard Gaussian entries and then normalize each column of $A$. Next, we use the following MATLAB command to generate the original block-sparse signal $\xorig\in\R^n$:%, which is a $k$-block-sparse vector with $k$-blocks i.i.d standard Gaussian entries at random (uniformly chosen) positions:
\begin{verbatim}
         I = randperm(n/j); I = I(k+1:end); x0 = randn(j,n/j);
         x0(:,I) = 0;  x_orig = reshape(x0,n,1),
\end{verbatim}
where $j$ is the size of each block and $k$ is the number of nonzero blocks. We let $b = A\xorig + 0.005\bar{\epsilon}$, where $\bar{\epsilon}$ is a random vector which has i.i.d standard Gaussian entries. Finally, we set $\sigma = 1.2\|0.005\bar{\epsilon}\|$.

%We compare Algorithm~\ref{SQP_retract} with Algorithm~\ref{ESQM} with $m = 1$, $P_1(x)= \sum\limits_{J\in\mathcal{J}}\|x_J\|$, $P_2(x) = \mu\|x\|$, $g_1(x) = \|Ax - b\|^2 - \sigma^2$ and $C = \{x\in\R^n:\; \|x\|_{\infty} \leq M\}$. We initialize two Algorithms at $x^0$ (see \eqref{initia}) and terminate two Algorithms when \eqref{termination1} holds.

In our numerical tests, we let $\mu =0.95$, {\color{black}$j = 2$, $(p,n,k) = (720i,2560i,120i)$} with $i\in \{2, 4, 6, 8, 10\}$. For each $i$, we generate {\color{black}20 random data} as described above. We present the computational results in Table~\ref{table1}, averaged over the {\color{black}$20$ random instances}. Here, Algorithm~\ref{SQP_retract} is denoted by FPA$_{\rm retract}$, Algorithm~\ref{ESQM} with different $\delta$ is denoted by ESQM$_{\rm ls}$ with the $\delta$ specified, and recall that SPGL1 is the algorithm used for obtaining initial points for FPA$_{\rm retract}$ and ESQM$_{\rm ls}$. We present the time for computing the QR decomposition of $A^T$ (denoted by QR), the time for computing $A^\dagger b$ given the QR factorization of $A^T$ (denoted by Slater), the CPU times of the algorithms (CPU time),\footnote{The reported CPU times of FPA$_{\rm retract}$ and ESQM$_{\rm ls}$ do not include the time for generating $\xfeas$ and $x^0$.} the number of iterations (denoted by Iter), the recovery errors $\text{RecErr} := \frac{\|x^* - \xorig\|}{\max\{1, \|\xorig\|\}}$ and the residuals ${\rm Residual} := \frac{\|Ax^* - b\| - \sigma }{\sigma}$, where $x^*$ is the approximate solution returned by the respective algorithm.

From Table ~\ref{table1}, one can see that the recovery errors of FPA$_{\rm retract}$ and ESQM$_{\rm ls}$ are comparable, and FPA$_{\rm retract}$ is usually faster. Moreover, the performance of ESQM$_{\rm ls}$ appears to be quite sensitive to the choice of $\delta$. {\color{black}On passing, we also point out that the solutions returned by FPA$_{\rm retract}$ and ESQM$_{\rm ls}$ tend to have smaller residuals (in absolute value) than those obtained by SPGL1.}

 %SQP$_{\rm retract}$ denote the performance of Algorithm~\ref{SQP_retract} and ESQM$_{{\rm ls},\ \delta = 0.5}$ denote the performance of ESQM$_{\rm ls}$ with $\delta = 0.5$.

\begin{table}[h]
{\color{black}
\caption{Computational results for problem \eqref{E3}}\label{table1}
\begin{center}
{\footnotesize
\begin{tabular}{|c|c|c|c|c|c|c|}\hline
\phantom{\diagbox{Date}{$i$}} & \multicolumn{1}{|c|}{Method} & \multicolumn{1}{|c|}{$i = 2$} & \multicolumn{1}{c|}{ $i = 4$ }
& \multicolumn{1}{c|}{ $i = 6$ } & \multicolumn{1}{|c|}{ $i = 8$ } & \multicolumn{1}{c|}{ $i = 10$ }\\\cline{1-7}
\multirow{7}*{CPU time} & \multirow{1}*{QR}
&   0.55 &   2.64 &   8.27 &  21.07 &  40.90\\\cline{2-2} \multirow{1}*{} & \multirow{1}*{Slater}
&   0.01 &   0.02 &   0.05 &   0.09 &   0.14\\\cline{2-2} \multirow{1}*{} & \multirow{1}*{SPGL1}
&   1.37 &   6.52 &  16.54 &  29.97 &  51.87\\\cline{2-7} \multirow{1}*{} & \multirow{1}*{FPA$_{\rm retract}$}
&   2.62 &  11.00 &  31.82 &  57.61 &  82.91\\\cline{2-2} \multirow{1}*{}  & \multirow{1}*{ESQM$_{{\rm ls},\ \delta = 0.5}$}
&   4.36 &  21.02 &  57.37 & 102.78 & 162.15\\\cline{2-2} \multirow{1}*{}  & \multirow{1}*{ESQM$_{{\rm ls},\ \delta = 0.1}$}
&   3.99 &  18.85 &  49.01 &  87.80 & 139.50\\\cline{2-2} \multirow{1}*{}  & \multirow{1}*{ESQM$_{{\rm ls},\ \delta = 0.02}$}
&  11.87 &  54.43 & 132.39 & 235.45 & 378.77\\\cline{1-7} \multirow{4}*{Iter} & \multirow{1}*{FPA$_{\rm retract}$}
&    342 &    349 &    449 &    460 &    421\\\cline{2-2} \multirow{1}*{}     & \multirow{1}*{ESQM$_{{\rm ls},\ \delta = 0.5}$}
&    756 &    899 &   1094 &   1111 &   1120\\\cline{2-2} \multirow{1}*{}     & \multirow{1}*{ESQM$_{{\rm ls},\ \delta = 0.1}$}
&    774 &    853 &    964 &    972 &    976\\\cline{2-2} \multirow{1}*{}     & \multirow{1}*{ESQM$_{{\rm ls},\ \delta = 0.02}$}
&   2447 &   2536 &   2650 &   2648 &   2665\\\cline{1-7} \multirow{5}*{RecErr} & \multirow{1}*{SPGL1}
&  0.054 &  0.044 &  0.053 &  0.054 &  0.045\\\cline{2-7} \multirow{1}*{} & \multirow{1}*{FPA$_{\rm retract}$}
&  0.030 &  0.032 &  0.035 &  0.035 &  0.035\\\cline{2-2} \multirow{1}*{} & \multirow{1}*{ESQM$_{{\rm ls},\ \delta = 0.5}$}
&  0.030 &  0.032 &  0.035 &  0.035 &  0.035\\\cline{2-2} \multirow{1}*{} & \multirow{1}*{ESQM$_{{\rm ls},\ \delta = 0.1}$}
&  0.030 &  0.032 &  0.035 &  0.035 &  0.035\\\cline{2-2} \multirow{1}*{} & \multirow{1}*{ESQM$_{{\rm ls},\ \delta = 0.02}$}
&  0.030 &  0.032 &  0.035 &  0.035 &  0.035\\\cline{1-7} \multirow{5}*{Residual} & \multirow{1}*{SPGL1}
& -2.17e-04 & -1.90e-04 & -1.68e-04 & -1.47e-04 & -1.27e-04\\\cline{2-7} \multirow{1}*{} & \multirow{1}*{FPA$_{\rm retract}$}
& -1.48e-15 & 6.36e-16 & -1.46e-16 & 2.41e-15 & -1.04e-15\\\cline{2-2} \multirow{1}*{} & \multirow{1}*{ESQM$_{{\rm ls},\ \delta = 0.5}$}
& 1.13e-10 & 1.24e-10 & 1.10e-10 & 1.13e-10 & 1.13e-10\\\cline{2-2} \multirow{1}*{}      & \multirow{1}*{ESQM$_{{\rm ls},\ \delta = 0.1}$}
& 9.71e-11 & 1.01e-10 & 9.62e-11 & 1.01e-10 & 1.02e-10\\\cline{2-2} \multirow{1}*{}      & \multirow{1}*{ESQM$_{{\rm ls},\ \delta = 0.02}$}
& 1.06e-10 & 9.77e-11 & 9.87e-11 & 9.51e-11 & 9.51e-11\\\cline{1-7}
\end{tabular}
}
\end{center}
}
\end{table}

\subsubsection{ Compressed sensing with Cauchy noise}

{\color{black}We compare Algorithm~\ref{SQP_retract2} and Algorithm~\ref{ESQM} on solving random instances of \eqref{E4}. Specifically, we generate two matrices ${\bf A}_{\rm re}\in \R^{p\times n}$ and ${\bf A}_{\rm im}\in \R^{p\times n}$ with i.i.d standard Gaussian entries, let
\begin{equation*}
A := \left[ \begin{matrix}
{\bf A}_{\rm re} & -{\bf A}_{\rm im}\\
{\bf A}_{\rm im} & {\bf A}_{\rm re}
\end{matrix} \right]
\end{equation*}
and then normalize each column of $A$. Next, we generate two vectors ${\bf u}\in\R^k$ and ${\bf v}\in\R^k$ with i.i.d standard Gaussian entries. We choose an index set $I$ of size $k$ at random and define the original sparse signal ${\bf z}_{\rm orig}\in {\mathbb{C}}^{n}$ by setting ${\bf z}_j := {\bf u}_j + i {\bf v}_j$ for all $j \in I$ and ${\bf z}_j = 0$ otherwise. We then let
\[
\xorig = \begin{bmatrix}
  {\bf z}_{\rm re}\\
  {\bf z}_{\rm im}
\end{bmatrix}\ \ {\rm and}\ \ b = A\xorig + 0.005\hat{\epsilon},
\]
where ${\bf z}_{\rm re}$ and ${\bf z}_{\rm im}$ are respectively the real and imaginary parts of ${\bf z}_{\rm orig}$, and $\hat{\epsilon}$ is a random vector with each entry independently following the Cauchy$(0, 1)$ distribution, i.e., $\hat{\epsilon}_{j} = \tan(\pi(\widetilde{\epsilon}_j - \frac{1}{2}))$ with each $\widetilde{\epsilon}_j\in\R^{2p}$ being uniformly chosen in $[0, 1]$. Finally, we set $\overline{\sigma} = 1.2\|0.005\hat{\epsilon}\|_{LL_2,\gamma}$ with $\gamma = 0.05$.}

%We also compare the Algorithm~\ref{SQP_retract2} with ESQM$_{\rm ls}$ in \cite{Au13} as shown in Algorithm~\ref{ESQM} with $m = 1$, $P_1(x) = \|x\|_1$, $P_2(x) = \mu\|x\|$, $g_1(x) = \|Ax - b\|_{LL_2,\gamma} - \overline{\sigma}$, and $C = \{x\in\R^n:\; \|x\|_{\infty} \leq M_1\}$. We initialize two Algorithms at $x^0$ (see \eqref{initia1}) and terminate two Algorithms when \eqref{termination2} is satisfied.

In our numerical experiments, we let $\mu =0.95$, $j = 2$, $\gamma = 0.05$, {\color{black} $(p,n,k) = (360i,1280i,60i)$ }with $i\in \{2, 4, 6, 8, 10\}$. For each $i$, we generate {\color{black}20 random instances} as described above. The computational results averaged over the {\color{black} $20$ random instances} are shown in Table~\ref{table2}. In the table, Algorithm~\ref{SQP_retract2} is denoted by FPA$_{\rm retract}$, Algorithm~\ref{ESQM} with different $\delta$ is denoted by ESQM$_{\rm ls}$ with the $\delta$ specified, and recall that SPGL1 is the algorithm used for obtaining initial points for FPA$_{\rm retract}$ and ESQM$_{\rm ls}$ by solving the auxiliary problem~\eqref{SPGL1nonconvexL1}. As before, we present the time for performing QR decomposition on $A^T$ (denoted by QR), the time for computing $A^\dagger b$ given the QR factorization of $A^T$ (denoted by Slater), the CPU time of algorithms (CPU time),\footnote{The reported CPU times of FPA$_{\rm retract}$ and ESQM$_{\rm ls}$ do not include the time for generating $\xfeasss$ and $x^0$.} the number of iterations (denoted by Iter), the recovery errors $\text{RecErr} := \frac{\|x^* - \xorig\|}{\max\{1, \|\xorig\|\}}$ and the residuals ${\rm Residual} := \frac{\|Ax^* - b\|_{LL_2,\gamma} - \overline{\sigma} }{\overline{\sigma}}$, where $x^*$ is the approximate solution returned by the respective algorithm.

From Table~\ref{table2}, one can see that the recovery errors of FPA$_{\rm retract}$ and ESQM$_{\rm ls}$ are comparable, and FPA$_{\rm retract}$ is usually faster, except for the case when {\color{black}$i = 6$}.
Indeed, we observe that the performance of FPA$_{\rm retract}$ can be quite sensitive to the ``quality" of the initial point (reflected partly by the RecErr of the approximate solution returned by SPGL1).

%SQP$_{\rm retract}$ denote the performance of Algorithm~\ref{SQP_retract2} and ESQM$_{{\rm ls},\ \delta = 0.5}$ denote the performance of ESQM$_{ls}$ with $\delta=0.5$.

\begin{table}[h]
{\color{black}
\caption{Computational results for problem \eqref{E4}}\label{table2}
\begin{center}
{\footnotesize
\begin{tabular}{|c|c|c|c|c|c|c|}\hline
\phantom{\diagbox{Date}{$i$}} & \multicolumn{1}{|c|}{Method} & \multicolumn{1}{|c|}{$i = 2$} & \multicolumn{1}{c|}{ $i = 4$ } & \multicolumn{1}{c|}{ $i = 6$ } & \multicolumn{1}{|c|}{ $i = 8$ } & \multicolumn{1}{c|}{ $i = 10$ }\\\cline{1-7}
\multirow{7}*{CPU time} & \multirow{1}*{QR}
&   0.61 &   2.82 &   8.42 &  21.15 &  40.40 \\\cline{2-2} \multirow{1}*{} & \multirow{1}*{Slater}
&   0.01 &   0.02 &   0.05 &   0.10 &   0.15 \\\cline{2-2} \multirow{1}*{} & \multirow{1}*{SPGL1}
&   0.97 &  12.69 &  68.82 &  22.22 &  98.06 \\\cline{2-7} \multirow{1}*{} & \multirow{1}*{FPA$_{\rm retract}$}
&   2.58 &  13.57 & 424.98 &  35.03 &  73.58 \\\cline{2-2} \multirow{1}*{} & \multirow{1}*{ESQM$_{{\rm ls},\ \delta = 0.5}$}
&  13.16 &  38.65 &  69.74 & 102.27 & 132.12 \\\cline{2-2} \multirow{1}*{} & \multirow{1}*{ESQM$_{{\rm ls},\ \delta = 0.1}$}
&  14.25 &  46.93 & 104.76 & 177.87 & 189.31 \\\cline{2-2} \multirow{1}*{} & \multirow{1}*{ESQM$_{{\rm ls},\ \delta = 0.02}$}
&  13.83 &  47.29 & 102.40 & 169.08 & 266.92 \\\cline{1-7} \multirow{4}*{Iter} & \multirow{1}*{FPA$_{\rm retract}$}
&    221 &    368 &   5611 &    272 &    371 \\\cline{2-2} \multirow{1}*{}     & \multirow{1}*{ESQM$_{{\rm ls},\ \delta = 0.5}$}
&   1745 &   1480 &   1229 &   1072 &    896 \\\cline{2-2} \multirow{1}*{}     & \multirow{1}*{ESQM$_{{\rm ls},\ \delta = 0.1}$}
&   1888 &   1796 &   1862 &   1877 &   1288 \\\cline{2-2} \multirow{1}*{}     & \multirow{1}*{ESQM$_{{\rm ls},\ \delta = 0.02}$}
&   1828 &   1814 &   1824 &   1787 &   1823 \\\cline{1-7} \multirow{5}*{RecErr} & \multirow{1}*{SPGL1}
&  0.869 &  1.494 &  5.501 &  0.964 &  1.443 \\\cline{2-7} \multirow{1}*{}& \multirow{1}*{FPA$_{\rm retract}$}
&  0.048 &  0.051 &  0.051 &  0.051 &  0.052 \\\cline{2-2} \multirow{1}*{}& \multirow{1}*{ESQM$_{{\rm ls},\ \delta = 0.5}$}
&  0.048 &  0.051 &  0.051 &  0.051 &  0.052 \\\cline{2-2} \multirow{1}*{}& \multirow{1}*{ESQM$_{{\rm ls},\ \delta = 0.1}$}
&  0.048 &  0.051 &  0.051 &  0.051 &  0.052 \\\cline{2-2} \multirow{1}*{}& \multirow{1}*{ESQM$_{{\rm ls},\ \delta = 0.02}$}
&  0.048 &  0.051 &  0.051 &  0.051 &  0.052 \\\cline{1-7} \multirow{5}*{Residual} & \multirow{1}*{SPGL1}
& -5.23e-01 & -6.27e-01 & -7.35e-01 & -6.01e-01 & -6.86e-01 \\\cline{2-7} \multirow{1}*{} & \multirow{1}*{FPA$_{\rm retract}$}
& -2.86e-12 & -2.37e-12 & -2.19e-12 & -2.45e-12 & -2.32e-12 \\\cline{2-2} \multirow{1}*{} & \multirow{1}*{ESQM$_{{\rm ls},\ \delta = 0.5}$}
& 7.20e-11 & 1.05e-10 & 8.33e-11 & 2.08e-10 & 1.11e-10 \\\cline{2-2} \multirow{1}*{}      & \multirow{1}*{ESQM$_{{\rm ls},\ \delta = 0.1}$}
& 2.95e-11 & 3.25e-11 & 3.15e-11 & 3.67e-11 & 9.77e-11 \\\cline{2-2} \multirow{1}*{}      & \multirow{1}*{ESQM$_{{\rm ls},\ \delta = 0.02}$}
& 2.70e-11 & 2.53e-11 & 2.76e-11 & 2.58e-11 & 2.93e-11 \\\cline{1-7}
\end{tabular}
}
\end{center}
}
\end{table}

{\color{black} \section{Conclusions}
In this paper, we considered difference-of-convex (DC) programs
with smooth inequality and simple geometric constraints, and proposed first-order feasible methods for solving this class of nonconvex optimization problems. Our proposed methods were based on affine approximations for the constraints, and maintained the feasibility of the iterates by using a ``retractation step" inspired from the development of manifold optimization. We then established the global subsequential convergence of the sequence generated by the proposed algorithms under strict feasibility condition if the constraint functions are convex. We also showed that our methods can be extended to solve DC
programs with a single specially structured nonconvex constraint. Finally, we illustrated the promising numerical performance of the proposed methods via two concrete optimization models: group-structured compressed sensing problems with Gaussian measurement noise and compressed sensing problems with Cauchy measurement noise.

Our results in this paper point out several interesting research questions. For example, in the case where the constraint functions are convex, we established that the sequence generated by our method is bounded, and any cluster point of the generated sequence is a critical point of the underlying optimization problem. It would be interesting to see whether the convexity assumption of the constraint functions can be relaxed as some suitable generalized convexity conditions such as quasi-convexity. Moreover, for nonconvex constrained DC programs, we showed that our method can be extended to solve DC programs with a single specially structured nonconvex constraint. Although this nontrivial extension allows us to cover some important applications such as compressed sensing problems with Cauchy measurement noise, how to further extend this to general nonconvex problems with multiple constraints is also an interesting and challenging research question to pursue. Finally,  \cite{PRA17} discussed the so-called d-stationary points for DC programs which is a stronger notion than the critical points, and designed algorithms with convergence gurantee to d-stationary points. Another interesting research direction is to examine how our methods can be modified to give rise to convergence to d-stationary points. These are some potential future research topics which deserve further study.}

\appendix

\section{Subproblems in FPA$_{\rm retract}$ and ESQM$_{\rm ls}$ for \eqref{E3} and \eqref{E4}}

In this appendix, we outline how the subproblems \eqref{subproblem2}, \eqref{subproblem4} and \eqref{subproblemhaha} are solved in our numerical experiments, which solve random instances of \eqref{E3} and \eqref{E4}.

\subsection{Solving the subproblem in FPA$_{\rm retract}$ for \eqref{E3} and \eqref{E4} }\label{sec:append1}
The subproblems \eqref{subproblem2} (with $m = 1$) and \eqref{subproblem4} can be reformulated as follows:
\begin{equation}\label{solvsub}
  \begin{array}{rl}
\min\limits_{x\in\R^n} & P_1(x) + \frac{1}{2\beta}\|x - y\|^2 + \delta_{C}(x)\\
{\rm s.t.} & \langle a, x\rangle\leq r,
  \end{array}
\end{equation}
where $C$ is a compact convex set, $y$, $a\in\R^n$, $r\in \R$ and $\beta >0$. Moreover, the Slater condition holds thanks to Theorem~\ref{welldef1}(i) and Theorem~\ref{welldef2}(i).

By \cite[Corollary~28.2.1, Theorem~28.3]{Ro70}, $x^*$ is optimal for \eqref{solvsub} if and only if $x^*$ is feasible {\color{black} for \eqref{solvsub}} and there exists $\lambda_*\ge 0$ such that the following conditions hold:
\begin{enumerate}[{\rm (I)}]
%\begin{multicols}{2}
  \item\label{subKKT}  $0\in\partial (P_1 + \delta_{C})(x^*) + \frac{1}{\beta}(x^* - y) + \lambda_*a $;
  \item\label{subKKT2}  $\lambda_*(\langle a, x^*\rangle - r)=0$.
%\end{multicols}
\end{enumerate}
Note that \eqref{subKKT} is equivalent to
\begin{equation}\label{solution}
x^* = \argmin_{x\in C}\left\{\beta P_1(x) + \frac12\|y - \lambda_*\beta a - x\|^2\right\}.
\end{equation}
For \eqref{E3}, since $P_1(x) = \sum\limits_{J\in\mathcal{J}}\|x_J\|$ and $C = \{x:\; \max\limits_{J\in {\cal J}}\|x_J\|\leq M\}$, through an argument similar to the one outlined in \cite[Appendix]{LiuPo17}, the relation \eqref{solution} is equivalent to
\begin{equation}\label{solution1}
x^*_J = \min\left\{\max\left\{1 - \frac{\beta}{\|y_J - \lambda_*\beta a_J\|}, 0\right\},\frac{M}{\|y_J - \lambda_*\beta a_J\|}\right\}(y_J - \lambda_*\beta a_J),
\end{equation}
for each $J \in {\cal J}$.

If $\lambda_* = 0$, we can obtain the optimal solution $x^*$ directly from \eqref{solution1}. On the other hand, if $\lambda_* > 0$, plugging \eqref{solution1} into \eqref{subKKT2} above, we see that $\lambda_*$ can be obtained as a root of the following function:
\begin{equation}\label{G}
T(\lambda) := r- \sum_{J\in\mathcal{J}} \min\left\{\max\left\{1 - \frac{\beta}{\|y_J - \lambda\beta a_J\|}, 0\right\},\frac{M}{\|y_J - \lambda\beta a_J\|}\right\}a_J^T(y_J - \lambda\beta a_J).
\end{equation}
The solution $x^*$ can then be recovered via \eqref{solution1}. In our implementation, we apply Newton's method to find a root of $T$. Specifically, we follow the framework of Newton's method described in \cite{SoSv98} (see the algorithmic description in \cite[Algorithm~2.1]{SoSv98}).\footnote{Follow the notation in \cite[Algorithm~2.1]{SoSv98}, we set $\beta = \frac{1}{2}$, $\lambda = 10^{-4}$, and choose $G_k$ from the generalized Jacobian of $T$ at $\lambda_k$, $\mu_k = 10^{-4}\|T(\lambda_k)\|^{\frac{1}{2}}$ and $\rho_k = 0$ at each iteration $k$.} At iteration $k = 0$, we initialize the Newton's method at $\lambda = 0$, and at each subsequent iteration, we warm-start this algorithm using the $\lambda$ obtained in the previous iteration; we terminate the Newton's method when the approximate solution $\lambda_*$ satisfies $\|T(\lambda_*)\| \le 10^{-10}$. Moreover, as a safeguard, we also terminate when stepsizes are small: Specifically, we terminate the algorithm when the stepsize $\alpha_k$ in \cite[Algorithm~2.1]{SoSv98} falls below $10^{-10}$.

% Finally, for \eqref{E4}, notice that the $\ell_1$ norm can be viewed as a special case of the group LASSO penalty function when each group has size 1. We can hence use a procedure similar to the one described above for solving the associated subproblem, and we use the same parameter settings as above when invoking the Newton's method.

\subsection{Solving the subproblems in ESQM$_{\rm ls}$ for \eqref{E3} and \eqref{E4}}
The subproblem \eqref{subproblemhaha} (with $m = 1$) takes the following form:
\begin{equation}\label{solvsub1}
  \begin{array}{rl}
\min\limits_{(x,s)\in\R^{n+1}} & P_1(x) + \frac{1}{\beta} s + \frac{1}{2\beta}\|x - y\|^2 + \delta_{C}(x)\\
{\rm s.t.} & \langle a, x\rangle - r \leq s,\\
& s \geq 0,
  \end{array}
\end{equation}
where $C$ is a compact convex set, $y$, $a\in\R^n$, $\beta >0$ and $r\in \R$. The Slater condition holds trivially for the above problem, i.e., there exists $\hat x \in C$ and $\hat s > 0$ so that the first inequality constraint also holds strictly.

By \cite[Corollary~28.2.1, Theorem~28.3]{Ro70}, $(x^*, s_*)$ is optimal for \eqref{solvsub1} if and only if $(x^*, s_*)$ is feasible for \eqref{solvsub1} and there exists $\lambda_*\ge 0$ such that the following conditions hold:
\begin{enumerate}[{\rm (A)}]
  \item\label{subKKT11}  $0\in\partial (P_1 + \delta_{C})(x^*) + \frac{1}{\beta}(x^* - y) + \lambda_*a $ and $0\in \frac{1}{\beta} - \lambda_* + \mathcal{N}_{\R_+}(s_*)$;
  \item\label{subKKT22}  $\lambda_*(\langle a, x^*\rangle - r - s_*)=0$.
\end{enumerate}
Note that the first inclusion in \eqref{subKKT11} is equivalent to \eqref{solution}. Hence, when solving \eqref{E3}, one can again rewrite $x^*$ as \eqref{solution1}. Next, from the second inclusion of \eqref{subKKT11}, we have that $\lambda_* - \frac{1}{\beta} \leq 0$ and $s_*(\lambda_* - \frac{1}{\beta}) = 0$. Thus:
\begin{itemize}
  \item If $\lambda_* = 0$ or $\lambda_* = \frac{1}{\beta}$, {\color{black} we }can obtain the optimal solution $x^*$ from \eqref{solution1};
  \item If $0< \lambda_* < \frac{1}{\beta}$, then $s_* = 0$. In this case, \eqref{subKKT22} become $\lambda_*(\langle a, x^*\rangle - r)=0$, which is the same as \eqref{subKKT2} in Section~\ref{sec:append1}. The $\lambda_*$ is then obtained as a root of $T$ in \eqref{G}, and we find it using the Newton's method with the same parameter settings used in Section~\ref{sec:append1}. The $x^*$ is then recovered via \eqref{solution1}.
\end{itemize}
% Finally, for \eqref{E4}, as discussed in the end of Section~\ref{sec:append1}, the associated subproblem can be solved via procedures similar to those outlined above.

{\color{black}
\textbf{Funding} Yongle Zhang was supported partly by the National Natural Science Foundation of China (11901414) and (11871359) and Sichuan Science and Technology Program (2018JY0201). Guoyin Li was partially supported by a Future fellowship from Australian Research Council (FT130100038) and a discovery project from Australian Research Council (DP190100555). Ting Kei Pong was supported partly by Hong Kong Research Grants Council PolyU153000/20p.

\subsection*{Declarations}
\textbf{Conflict of interest} The second author is a guest editor of the topical collection "Mathematics of Computation and Optimisation" of this journal. }

\end{document}